\crefname{hypothesis}{Hypothesis}{Hypotheses}
\newcommand{\echo}[1]{{\color{blue}}}
\title{Achieving Universal Approximation and Universal Interpolation via Nonlinearity of Control Families
\thanks{
    Submitted to the editors DATE.
    \funding{This work was partially supported by the National Natural Science Foundation of China under grants 12494543, the National Key R\&D Program of China under grants 2024YFF0505501, and the Fundamental Research Funds for the Central Universities.
    }
}
}
\author{
Yongqiang Cai\thanks{School of Mathematical Sciences, Laboratory of Mathematics and Complex Systems, MOE, Beijing Normal University, 100875 Beijing, China. (caiyq.math@bnu.edu.cn, yfduan@mail.bnu.edu.cn)}
\and
Yifei Duan\footnotemark[2]
}
\DeclareMathOperator{\diag}{diag}
\begin{document}

\maketitle

\begin{abstract}
A significant connection exists between the controllability of dynamical systems and the approximation capabilities of neural networks, where residual networks and vanilla feedforward neural networks can both be regarded as numerical discretizations of the flow maps of dynamical systems. Leveraging the expressive power of neural networks, prior works have explored various control families $\mathcal{F}$ that enable the flow maps of dynamical systems to achieve either the universal approximation property (UAP) or the universal interpolation property (UIP). For example, the control family $\mathcal{F}_\text{ass}({\mathrm{ReLU}})$, consisting of affine maps together with a specific nonlinear function ReLU, achieves UAP; while the affine-invariant nonlinear control family $\mathcal{F}_{\mathrm{aff}}(f)$ containing a nonlinear function $f$ achieves UIP. However, UAP and UIP are generally not equivalent, and thus typically need to be studied separately with different techniques.
In this paper, we investigate more general control families, including $\mathcal{F}_\text{ass}(f)$ with nonlinear functions $f$ beyond ReLU, the diagonal affine-invariant family $\mathcal{F}_{\mathrm{diag}}(f)$, and UAP for orientation-preserving diffeomorphisms under the uniform norm. We show that in certain special cases, UAP and UIP are indeed equivalent, whereas in the general case, we introduce the notion of local UIP (a substantially weaker version of UIP) and prove that the combination of UAP and local UIP implies UIP.
In particular, the control family $\mathcal{F}_\text{ass}({\mathrm{ReLU}})$ achieves the UIP.
\end{abstract}

\begin{keywords}
function approximation, diffeomorphism, flow map, affine control system.
\end{keywords}

\begin{MSCcodes}
68T07, 65P99, 65Z05, 41A65

\end{MSCcodes}

\section{Introduction}

The rapid development of deep learning has opened up new research problems and opportunities in the field of control theory \cite{Li2018optimal,Benning2019Deep,Liu2019Deep,Bensoussan2022Machine,Agrachev2022Control}. In particular, the remarkable performance of residual-type neural networks in practice \cite{He2016Deep,He2016Identity}, together with their close connection to dynamical systems \cite{Weinan2017A, haber2017stable}, has motivated the study of how dynamical systems with control parameters can exhibit rich expressive power \cite{Li2022Deep,Tabuada2022Universal,Ruiz2022Interpolation,Kang2022Feedforward,Ruiz-Balet2023Neural,Alvarez2024Interplay}. This expressive power is not limited to the controllability of a single point or finitely many points, but also extends to the approximate controllability of all points within a given region. 
The controllability of a single point has been extensively studied in classical control theory, where typical tools rely on the Chow--Rashevskii theorem and the bracket-generating condition \cite{Montgomery2002tour}. For the controllability of finitely many points, also known as universal interpolation \cite{Cuchiero2020Deep,Cheng2023Interpolation} or ensemble controllability \cite{Agrachev2016Ensemble,Agrachev2020Control}, the problem can often be reduced to a single-point controllability problem in a higher-dimensional space, so that classical tools remain applicable to some extent. However, when one considers the controllability of all points in a region, the challenge becomes substantially greater: the set of points is uncountably infinite, and exact controllability is generally unattainable. In this case, alternative tools are required, particularly those from approximation theory, to properly characterize and study approximate controllability.

Consider the dynamical system
\begin{align}
    \label{eq:ode_f_theta}
 x'(t) = f_{\theta(t)}(x(t)), \quad x(0) = x_0,
\end{align}
where $\theta(t)$ is a piecewise constant control parameter and $f_\theta$ is drawn from a control family $\mathcal{F} \subset C(\mathbb{R}^d, \mathbb{R}^d)$. We are interested in the flow map $\phi_f^t(x_0)$, which sends the initial condition $x_0$ to the state $x(t)$ at time $t$. When the elements of $\mathcal{F}$ possess sufficient smoothness and are Lipschitz continuous, the flow map $\phi_f^t$ is an orientation-preserving diffeomorphism, i.e., $\phi_f^t \in \mathrm{Diff}_0(\mathbb{R}^d)$.
Although the set $\mathrm{Diff}_0(\mathbb{R}^d)$ of orientation-preserving diffeomorphisms is small within the function space $C(\mathbb{R}^d, \mathbb{R}^d)$, 
it is dense in $C(\Omega, \mathbb{R}^d)$ with respect to the $L^p$ norm for any compact domain $\Omega \subset \mathbb{R}^d$, 
provided that the dimension satisfies $d \geq 2$ \cite{Brenier2003Approximation}. However, this density property does not extend to the uniform (continuous) norm. For these reasons, when studying the controllability of all points in a domain $\Omega$, it is more appropriate to describe the problem as examining whether the flow maps $\phi_f^t$ can approximate orientation-preserving diffeomorphisms arbitrarily well under the uniform norm. 
This property is referred to as the universal approximation, and it has been widely used in the field of neural networks \cite{Cybenkot1989Approximation,Hornik1989Multilayer, Leshno1993Multilayer, pinkus1999approximation}.

Existing studies \cite{Cuchiero2020Deep, Li2022Deep, Tabuada2022Universal,Cheng2023Interpolation,Duan2025Minimal} have investigated a variety of control families $\mathcal{F}$ to ensure that the associated dynamical flow maps possess the \emph{universal approximation property} (UAP) or the \emph{universal interpolation property} (UIP). When the control family $\mathcal{F}$ itself is sufficiently expressive, it is not surprising that its flow maps admit UAP or UIP. However, for more general control families $\mathcal{F}$, identifying appropriate and mild sufficient conditions remains an important research question.
Motivated by the fact that residual networks can be viewed as forward Euler discretizations of dynamical systems \cite{Weinan2017A}, researchers have considered control families of the form
\begin{align}
    \label{eq:F_aff_f}
 \mathcal{F}_{\text{aff}}(f) 
 := 
 \left\{~ x \mapsto S f(Wx+b) ~|~ (S, W, b) \in \mathbb{R}^{d\times d} \times \mathbb{R}^{d\times d} \times \mathbb{R}^d ~\right\},
\end{align}
where $f$ is either a well function \cite{Li2022Deep} or satisfies a quadratic differential equation  \cite{Tabuada2022Universal}. This family is often referred to as the \emph{affine invariant family} \cite{Li2022Deep,Cheng2023Interpolation}. 
Recently, Cheng et al. \cite{Chen2018Neural} and Duan et al. \cite{Duan2025Minimal} show that if $f$ is nonlinear and Lipschitz continuous, then the flow maps generated by $\mathcal{F}_{\text{aff}}(f)$ achieve the UIP and UAP. 

The key to this result is that the linear span of the family, $\mathrm{span}(\mathcal{F}_{\text{aff}}(f))$, forms a sufficiently rich function space which is closely tied to the universal approximation property of neural networks \cite{Duan2025Minimal}.
When the control family $\mathcal{F}$ consists of smooth functions, the expressive power of flow maps can be further enhanced via Lie brackets \cite{Jurdjevic1997Geometric,Brockett1972System,Sussmann1972Controllability}. For example, Cuchiero et al. \cite{Cuchiero2020Deep} constructed a control family consisting of only five functions to achieve UIP. Their construction used polynomial functions of degree at most two. Since the computation of Lie brackets for polynomial functions is relatively straightforward, they were able to verify that the constructed family generates all polynomial functions through Lie brackets. 
However, Lie bracket computations generally impose strong smoothness conditions, which makes it difficult to extend this approach to more general control families. 

In situations where Lie brackets are not applicable, Duan et al. \cite{Duan2025Minimal} introduced a special control family $ \mathcal{F}_{\mathrm{ass}}(\mathrm{ReLU}) $ that achieves UAP:
\begin{align}
    \label{eq:F_ass_ReLU}
 \mathcal{F}_{\mathrm{ass}}(\mathrm{ReLU}) 
 := 
    \{~ x \mapsto Ax+b ~|~ A\in \mathbb{R}^{d\times d}, b \in \mathbb{R}^d ~\} \cup \{~ \pm \mathrm{ReLU} ~\},
\end{align}
where $\mathrm{ReLU}$ denotes the rectified linear unit, which is an elementwise function and maps $x=(x_1,...,x_d)$ to $\mathrm{ReLU}(x) = (\max(x_1,0),...,\max(x_d,0))$. It is worth noting that ReLU is globally Lipschitz continuous, and Duan et al. \cite{Duan2022Vanilla} show that its flow maps can be expressed in terms of leaky-ReLU functions \cite{Maas2013Rectifier}. The construction of this control family was inspired by the observation that leaky-ReLU feedforward networks can be viewed as numerical discretizations of dynamical system flows, and compared with residual networks, they more readily preserve the diffeomorphic property of the flows \cite{Duan2022Vanilla}. 
Moreover, the control families considered by Cuchiero et al. \cite{Cuchiero2020Deep} and Duan et al. \cite{Duan2025Minimal} are subsets of a finite-dimensional function space, so the problem can be reformulated as an affine control system of the form
\begin{align}
    \label{eq:affine_control_system}
 x'(t) &= \sum_{i=1}^m u_i(t) f_i(x(t)), \quad m \in \mathbb{Z}^+,
\end{align}
where the control parameter $u=(u_1,\dots,u_m)$ is a time-dependent function. This feature distinguishes their setting from that of affine invariant families.

It is worth noting that existing literature adopts different approaches to study UAP and UIP, yet their relationship has not been thoroughly discussed. Although Cheng et al.\cite{Cheng2023Interpolation} provided a counterexample showing that UAP and UIP are not equivalent, their formulation of UAP was in terms of the $L^p$ norm rather than the uniform norm. In this paper, we focus on the expressive power of dynamical system flow maps under the uniform norm. 
Specifically, we consider more general control families in Definition~\ref{def:control_family}, including $\mathcal{F}_{\mathrm{ass}}(f)$ with nonlinear mappings $f$ that are more general than ReLU in (\ref{eq:F_ass_ReLU}), as well as diagonal affine-invariant families $\mathcal{F}_{\mathrm{diag}}(f)$ which restrict the matrices in (\ref{eq:F_aff_f}) as diagonal matrices, and the nonlinearity of $f$ plays a crucial role in achieving UAP and UIP.. 

We provide some sufficient conditions on $f$ under which $\mathcal{F}_{\mathrm{ass}}(f)$, $\mathcal{F}_{\mathrm{aff}}(f)$, and $\mathcal{F}_{\mathrm{diag}}(f)$ achieve either UAP or UIP. In particular, in the elementwise setting, it suffices for $f$ to be nonlinear in order for $\mathcal{F}_{\mathrm{ass}}(f)$ to achieve the UAP. Moreover, we show that for certain special families, such as affine-invariant control families, UAP and UIP are equivalent. For the case of general control families, we introduce the concept of \emph{local UIP}, which is substantially weaker than UIP but much easier to verify. We prove that UAP, together with local UIP, implies UIP. As a consequence, we can establish that the control family $\mathcal{F}_{\text{ass}}(\mathrm{ReLU})$ in (\ref{eq:F_ass_ReLU}) achieves the UIP.

This paper is organized as follows. In Section~\ref{sec:results}, we introduce the necessary notation and present our main results, including theorems and illustrative examples. Section~\ref{sec:proof} is devoted to the proofs of these theorems.

\section{Notations and results}
\label{sec:results}

\subsection{Control family and hypothesis space}

Consider the $d$-dimensional initial value problem:
\begin{align}\label{eq:ODE_general}
 \left\{
    \begin{aligned}
    &\dot{x}(t) = v(x(t),t), \quad t\in(0,\tau), \tau>0,\\
    &x(0)=x_0 \in \mathbb{R}^d.
    \end{aligned}
 \right.
\end{align}
where $v(x,t)$ is Lipschitz continuous in $x$ and piecewise constant in $t$, guaranteeing a unique trajectory $x(t)$ for each $x_0$. 
The map from $x_0$ to $x(\tau)$ is called a flow map, and we denote it by $\phi_{v}^\tau(x_0)$. 
If $v(x,t) = {f_{i}}(x)$ when $t$ is within the interval $\big(\sum_{j=1}^{i-1} \tau_j, \sum_{j=1}^i \tau_j \big), \tau_i \ge 0, i = 1,2,...,n$, then the flow map $\phi_{v}^\tau$ can be represented as a composition of flow maps of autonomous systems $\phi_{{f_i}}^{\tau_i}$, 
\begin{align}
    \label{eq:flow_map_composition}
    \phi_{v}^\tau(x) 
 =
    \phi_{{f}_n}^{\tau_n} \circ \cdots \circ
    \phi_{{f}_2}^{\tau_2} \circ
    \phi_{{f}_1}^{\tau_1} (x), \quad \tau = \sum_{i=1}^n \tau_i.
\end{align}
The notation here is consistent with (\ref{eq:ode_f_theta}) if the field $v(x,t)$ is parameterized as $f_{\theta}(x)$ with parameters $\theta$ depending on $t$.
Later, we consider the case in which all $v(\cdot,t), t\ge 0,$ belong to a prescribed set $\mathcal{F}$, called the control family, and define the deduced hypothesis space $\mathcal{H}(\mathcal{F})$ as the set of all compositions of flow maps $\phi^\tau_f, f \in \mathcal{F}$:
\begin{align}
    \label{eq:hypothesis_space}
 \mathcal{H}(\mathcal{F})
 =
 \left\{
    \phi_{f_n}^{\tau_n} \circ \cdots \circ
    \phi_{f_2}^{\tau_2} \circ
    \phi_{f_1}^{\tau_1} \,\middle|\,
 f_i \in \mathcal{F}, \tau_i \ge 0, n \in \mathbb{N}
 \right\}.
\end{align}
The control families of interest in this paper are those given in the following definition.
\begin{definition}[Control family \cite{Li2022Deep,Cheng2023Interpolation,Duan2025Minimal}]
	\label{def:control_family}
Let $f: \mathbb{R}^d \to \mathbb{R}^d$ be a fixed nonlinear Lipschitz continuous function. We define the following control families:
\begin{enumerate}

    \item[1)] Associated affine  control family 
    $\mathcal{F}_{\operatorname{ass}}(f) = \mathcal{F}_0  \cup \{\pm f\}$, 
 where $\mathcal{F}_0$ is the family of all affine functions, 
    \emph{i.e.}
    $\mathcal{F}_0 = \left\{x \mapsto Ax+b \,\middle|\, A\in \mathbb{R}^{d\times d}, b \in \mathbb{R}^d\right\}$;

    \item[2)] Affine invariant control family 
    $\mathcal{F}_{\operatorname{aff}}(f) 
 = \{ x \mapsto S f(Wx+b) ~|~ (S, W, b) \in \mathbb{R}^{d\times d} \times \mathbb{R}^{d\times d} \times \mathbb{R}^d \}$;

    \item[3)] Diagonal affine invariant control family 
    $\mathcal{F}_{\operatorname{diag}}(f) = 
    \{x \mapsto D f(\Lambda x+b) ~|~ 
 D \in \mathbb{R}^{d\times d},\Lambda \in \mathbb{R}^{d\times d}, b\in \mathbb{R}^d, D~\text{and}~\Lambda~\text{are diagonal}~\}$.
\end{enumerate}
\end{definition}
Here, the nonlinearity of $f$ is essential, since the hypothesis space associated with an affine control family is relatively trivial. In fact, the hypothesis space $\mathcal{H}(\mathcal{F}_0)$ contains orientation-preserving affine functions \cite{Duan2025Minimal}:
\begin{align}
 \mathcal{H}(\mathcal{F}_0) = \left\{ x \mapsto Wx+b \,\middle|\, W \in \mathbb{R}^{d \times d},\; \det W > 0,\; b \in \mathbb{R}^d \right\}.
\end{align}
Next, we present some definitions of the nonlinearity we are interested in:
\begin{definition}[Nonlinearity \cite{Duan2025Minimal}]
 For a (vector-valued) continuous function $f=(f_1,...,f_{d'}) \in C(\mathbb{R}^d,\mathbb{R}^{d'})$ from $\mathbb{R}^d$ to $\mathbb{R}^{d'}$ with components $f_i \in C(\mathbb{R}^d), i= 1,...,d'$, we say that
\begin{enumerate}
    \item[1)] $f$ is nonlinear if at least one component $f_i$ is nonlinear,
    \item[2)] $f_i$ is coordinate nonlinear if for any coordinate vector $e_j \in \mathbb{R}^d, j=1,...,d$, there is a vector $b_{ij} \in \mathbb{R}^d$ such that $f_i(\cdot e_{j} + b_{ij}) \in C(\mathbb{R})$ is nonlinear,
    \item[3)] and $f$ is fully coordinate nonlinear if all components $f_i$ are coordinate nonlinear.
\end{enumerate}
\end{definition}

\subsection{Universal approximation and universal interpolation}

Our main results are for the expressive power of the hypothesis space $\mathcal{H}(\mathcal{F})$, where the notation of UAP and UIP is needed. We first give the formal definition of UAP below.
\begin{definition}[UAP \cite{Cai2023Achieve,Cai2024Vocabulary,Duan2025Minimal}]
 For any compact domain $\Omega \subset \mathbb{R}^d$, target function space $\mathcal{T}$ and hypothesis space $\mathcal{H}$, we say that
    \begin{itemize}
        \item[1)] $\mathcal{H}$ has the $C(\Omega)$-UAP for $\mathcal{T}$ if for any $g \in \mathcal{T}$ and $\varepsilon>0$, there is a function $h \in \mathcal{H}$ such that $\|g-h\|_{C(\Omega)} \le \varepsilon$, \emph{i.e.}
        \begin{align*}
            \|g(x)-h(x)\| \le \varepsilon, \quad \forall x \in \Omega.
        \end{align*}
        \item[2)] $\mathcal{H}$ has the $L^p(\Omega)$-UAP for $\mathcal{T}$ with $p \in [1,+\infty)$ if for any ${g} \in \mathcal{T}$ and $\varepsilon>0$, there is a function $h \in \mathcal{H}$ such that
        \begin{align*} \|{g}-h\|_{L^p(\Omega)} &= 
 \Big(\int_\Omega \|{g}(x)-h(x)\|^p dx\Big)^{1/p} \le \varepsilon.
        \end{align*}
    \end{itemize}
\end{definition}
In the absence of ambiguity, we can omit $\Omega$ to say the ``$C$-UAP'' and ``$L^p$-UAP''. Clearly, since our approximation is considered on compact domains, $C$-UAP implies $L^p$-UAP. In addition, our primary focus is on the target space $\mathrm{Diff}_0(\mathbb{R}^d)$, and in the context of this paper, we say that a control family $\mathcal{F}$ achieves the UAP if $\mathcal{H}(\mathcal{F})$ has the $C$-UAP for $\mathrm{Diff}_0(\mathbb{R}^d)$. For instance, the control family $\mathcal{F}_{\text{ass}}(\operatorname{ReLU})$ achieves the UAP.
An important feature of $C$-UAP is that it is inherited via function compositions. We formalize this property in the following proposition.
\begin{proposition}[\cite{Duan2022Vanilla}]
    \label{prop:composition_approximation}
 Let the map $T = F_n \circ ... \circ F_1$ be a composition of $n$ continuous functions $F_i$ defined on open domains $D_i$, and let $\mathcal{F}$ be a continuous function class that can uniformly approximate each $F_i$ on any compact domain $\mathcal{K}_i \subset D_i$. 
 Then, for any compact domain $\mathcal{K} \subset D_1$ and $\varepsilon >0$, there are $n$ functions $\tilde F_1, ..., \tilde F_n$ in $\mathcal{F}$ such that $\|T(x) - \tilde F_n \circ ... \circ \tilde F_1 (x)\| \le \varepsilon$ for all $x \in \mathcal{K}$.
\end{proposition}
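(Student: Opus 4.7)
The plan is to prove this by induction on $n$, propagating the approximation error forward through the composition while carefully keeping the intermediate points inside compact sets on which we have uniform control. The base case $n=1$ is immediate from the hypothesis that $\mathcal{F}$ uniformly approximates $F_1$ on any compact subset of $D_1$.

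For the inductive step, write $T = F_n \circ G$ with $G = F_{n-1}\circ\cdots\circ F_1$. First I would identify the compact set $\mathcal{K}' = G(\mathcal{K}) \subset D_n$, which is compact by continuity of $G$. Since $D_n$ is open and $\mathcal{K}'$ is compact, I can choose a slightly enlarged compact neighborhood $\tilde{\mathcal{K}}' \subset D_n$ with $\mathcal{K}' \subset \operatorname{int}(\tilde{\mathcal{K}}')$; this buffer is what allows approximate trajectories to remain in a region where $F_n$ is controlled. The continuous function $F_n$ is uniformly continuous on $\tilde{\mathcal{K}}'$, so for the given $\varepsilon>0$ I can choose $\delta>0$ such that $\|y-y'\|<\delta$ in $\tilde{\mathcal{K}}'$ implies $\|F_n(y)-F_n(y')\|<\varepsilon/2$. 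I would also take $\delta$ small enough that a $\delta$-neighborhood of $\mathcal{K}'$ lies inside $\tilde{\mathcal{K}}'$.

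Next, I would apply the inductive hypothesis to $G$ (a composition of $n-1$ functions) on the compact set $\mathcal{K}$: there exist $\tilde F_1,\ldots,\tilde F_{n-1}\in\mathcal{F}$ such that, writing $\tilde G = \tilde F_{n-1}\circ\cdots\circ\tilde F_1$, we have $\|G(x)-\tilde G(x)\|<\delta$ for all $x\in\mathcal{K}$. In particular $\tilde G(\mathcal{K})\subset\tilde{\mathcal{K}}'$. Finally I would pick $\tilde F_n\in\mathcal{F}$ uniformly approximating $F_n$ on $\tilde{\mathcal{K}}'$ within $\varepsilon/2$. The triangle inequality then gives, for $x\in\mathcal{K}$,
\begin{align*}
\|T(x) - \tilde F_n(\tilde G(x))\|
&\le \|F_n(G(x)) - F_n(\tilde G(x))\| + \|F_n(\tilde G(x)) - \tilde F_n(\tilde G(x))\|\\
&\le \varepsilon/2 + \varepsilon/2 = \varepsilon,
\end{align*}
where the first term uses uniform continuity and the second uses that $\tilde G(x)\in\tilde{\mathcal{K}}'$.

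The main obstacle I anticipate is a bookkeeping issue rather than a deep one: one must be careful that the \emph{approximate} composition $\tilde F_{k}\circ\cdots\circ\tilde F_1(x)$ actually lands inside the compact set on which $\tilde F_{k+1}$ has been chosen to approximate $F_{k+1}$. The openness of each $D_i$ (allowing strict compact enlargements of $F_{i-1}\circ\cdots\circ F_1(\mathcal{K})$) and the freedom to shrink $\delta$ at each stage are what make this bookkeeping work; without the open-domain hypothesis one would need extra assumptions on $\mathcal{F}$ to prevent the trajectories from drifting out of the controlled region.
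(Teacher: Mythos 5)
Your proof is correct and is exactly the standard error-propagation/induction argument for compositions; the paper itself does not reprove this proposition but cites it to \cite{Duan2022Vanilla}, where the argument is of the same form. Your attention to the bookkeeping point --- choosing a compact enlargement $\tilde{\mathcal{K}}'\subset D_n$ of $G(\mathcal{K})$ and shrinking $\delta$ so that the approximate trajectory $\tilde G(\mathcal{K})$ stays inside it --- is precisely the step that makes the induction go through.
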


Next, we define the UIP following \cite{Cuchiero2020Deep,Ruiz-Balet2023Neural,Cheng2023Interpolation} and introduce a new concept called the local UIP.

\begin{definition} [UIP and local UIP] 
We say the hypothesis space $\mathcal{H}$ has the UIP if for any positive integer $N$ and any $N$ distinct data points $\left(x_1, y_1\right), \ldots,\left(x_N, y_N\right) \in \mathbb{R}^d \times \mathbb{R}^d$ with $x_i \neq x_j, y_i \neq y_j$ for all $i \neq j$ there exists $\varphi \in \mathcal{H}$ such that
\begin{align}\label{eq:UIP}
    \varphi\left(x_i\right)=y_i, \quad i=1, \ldots, N .
\end{align}
Additionally, we say $\mathcal{H}$ has local UIP if for any positive integer $N$, there exists $N$ distinct points $x_1,\cdots,x_N \in \mathbb{R}^d$ and $\delta>0$ such that for any $N$ distinct points $y_1,\cdots,y_N \in \mathbb{R}^d$ with $\|y_i - x_i\| < \delta$, there exists $\varphi \in \mathcal{H}$ such that Eq.~\eqref{eq:UIP} holds.
\end{definition}
Clearly, local UIP is weaker than UIP, since it only requires the existence of a set of points at which local perturbations enable interpolation.

Note that we only discuss the UIP for dimensions larger than one, the crucial difference for the one-dimensional case is that the flow maps are always increasing, if any two data pairs $(x_i,y_i)$ and $(x_j,y_j)$ satisfy $x_i < x_j$ and $y_i > y_j$, then the UIP of the flow maps cannot hold. For this reason, when we talk about the one-dimensional UIP or local UIP in this paper, we assume the data $\{(x_i,y_i)\}_{i=1}^N$ to satisfy that $x_1 < \cdots < x_N$ and $y_1 < \cdots < y_N$.

\subsection{Main results}

Now we provided some sufficient conditions on the control families such that the deduced hypothesis spaces have UAP or UIP.

As stated before, the ReLU associated affine control family $\mathcal{F}_{\text{ass}}(\text{ReLU})$ achieves the UAP \cite{Duan2025Minimal}. Our first result extends ReLU to more general nonlinear functions. 
\begin{theorem}[UAP of associated affine control families]
    \label{th:UAP_of_F_ass}
 The hypothesis space $\mathcal{H}(\mathcal{F}_{\text{ass}}(f))$ possesses $C$-UAP for $\operatorname{Diff}_0(\mathbb{R}^d)$ on any compact set $\Omega \subset \mathbb{R}^d$ provided $f\in C(\mathbb{R}^d,\mathbb{R}^d)$ satisfies one of the following conditions: 
    \begin{enumerate}
        \item[1)] $f$ is coordinate-separable, globally Lipschitz continuous and nonlinear;
        \item[2)] $f$ is coordinate-separable, locally Lipschitz continuous, nonlinear, and in the case $d = 1$, $f$ is not a quadratic function of the form $ax^2 + bx + c$;
        \item[3)] 
        $f$ is continuously differentiable and $L^1$ integrable, 
        $\|x\|^d \|\nabla f(x)\|$ is bounded, and $\int_{\mathbb{R}^d}f(x)dx \neq 0$.
    \end{enumerate}
\end{theorem}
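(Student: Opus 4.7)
My plan is to reduce the UAP statement, under each of the three conditions, to a density result for a family of affine transforms of $f$, following the overall strategy used for ReLU in \cite{Duan2025Minimal}. There are three layers to the argument: (i) decompose an arbitrary target diffeomorphism into near-identity factors; (ii) realize near-identity flows using the hypothesis space via conjugation of $\phi_{\pm f}^{\tau}$ by orientation-preserving affine maps, combined through Lie--Trotter splitting; (iii) invoke a neural-network-type density argument, tailored to the structural hypothesis on $f$, to show that the reachable vector fields are dense in $C(\Omega,\mathbb{R}^d)$.

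For step (i), any $T\in\mathrm{Diff}_0(\mathbb{R}^d)$ can be connected to the identity by a smooth isotopy and represented on a compact set as the time-$1$ flow of a smooth, time-dependent vector field $v(x,t)$. Subdividing $[0,1]$ into $n$ short intervals on which $v$ is nearly constant yields $T\approx T_n\circ\cdots\circ T_1$, with each $T_i$ the short-time flow of a fixed vector field $v_i$. By Proposition~\ref{prop:composition_approximation}, it then suffices to uniformly approximate each $T_i$, on a slight enlargement of $\Omega$, by an element of $\mathcal{H}(\mathcal{F}_{\text{ass}}(f))$.

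For step (ii), since $\mathcal{H}(\mathcal{F}_0)$ already contains every orientation-preserving affine map, for any $A(x)=Wx+b$ with $\det W>0$ the conjugation identity
\begin{equation*}
    A\circ\phi_{\pm f}^{\tau}\circ A^{-1} \;=\; \phi_{\pm A f(A^{-1}\cdot)}^{\tau}
\end{equation*}
places this flow inside $\mathcal{H}(\mathcal{F}_{\text{ass}}(f))$. A Lie--Trotter splitting of short-time compositions then shows that $\mathcal{H}$ uniformly approximates the short-time flows of any finite combination $\sum_{k}\varepsilon_{k} A_{k} f(A_{k}^{-1}\cdot)$ with $\varepsilon_{k}\in\{+1,-1\}$. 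Denoting by $\mathcal{V}(f)$ the collection of such combinations, the problem reduces to the uniform density of $\mathcal{V}(f)$ in $C(\Omega,\mathbb{R}^d)$ under each hypothesis on $f$.

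For step (iii) I would handle the three conditions separately. Under Condition~1, coordinate-separability reduces the density question coordinatewise to that of scalar shallow networks $t\mapsto\sum_{k}c_{k}\sigma(w_{k} t+b_{k})$ with a nonlinear Lipschitz activation, which is classical \cite{Leshno1993Multilayer}. Under Condition~2, one first truncates the activation to the compact range traced out over $\Omega$, restoring global Lipschitz continuity; the exclusion of one-dimensional quadratics then rules out the single degenerate case in which the corresponding reachable set would collapse to a finite-dimensional space. Under Condition~3, a convolutional argument replaces the shallow-network one: the $L^{1}$-integrability, the decay $\|x\|^{d}\|\nabla f(x)\|\in L^{\infty}$, and the condition $\int_{\mathbb{R}^d}f\neq 0$ together imply that appropriately rescaled copies of $f$ form an approximate identity, so convolving with a continuous target produces $\mathcal{V}(f)$-approximants of arbitrary $v\in C(\Omega,\mathbb{R}^d)$. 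The principal obstacle is executing this third argument while staying inside $\mathcal{V}(f)$: here the pre- and post-conjugating affine parts are tied together by $S=W$ rather than being independent as in $\mathcal{F}_{\text{aff}}(f)$, so achieving full coordinate mixing requires combining several conjugations with different $W$'s, together with an auxiliary parity argument to ensure that each intermediate affine map has positive determinant.
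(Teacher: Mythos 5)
Your overall architecture (short-time factorization of the target diffeomorphism, conjugation of $\phi^{\tau}_{\pm f}$ by affine maps, Lie--Trotter splitting, then a density argument for the resulting vector fields) matches the skeleton of the paper's proof. However, there is a genuine gap in your treatment of Condition~2. Under that condition $f$ may be a polynomial of degree $\ge 2$ (e.g.\ $f(x)=x^{3}$ elementwise, or $f(x)=x^{2}$ elementwise with $d\ge 2$), and then your set $\mathcal{V}(f)$ of finite combinations $\sum_{k}\varepsilon_{k}A_{k}f(A_{k}^{-1}\cdot+b_k)$ is contained in the \emph{finite-dimensional} space of polynomial vector fields of degree at most $\deg f$; it is never dense in $C(\Omega,\mathbb{R}^d)$, and the classical shallow-network theorem of Leshno et al.\ does not apply because it requires a non-polynomial activation. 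Your remark that excluding one-dimensional quadratics ``rules out the single degenerate case in which the reachable set would collapse to a finite-dimensional space'' is therefore incorrect: the linear span collapses for \emph{every} polynomial $f$ of degree $\ge 2$. What the exclusion of $d=1$ quadratics actually reflects is that in that one case even the group generated by the flows stays degenerate (M\"obius transformations). To rescue the polynomial case you need a mechanism your proposal does not contain, namely Lie brackets: the commutator approximation $\phi^{\tau}_{[f,g]}\approx\bigl(\phi_{g}^{\sqrt{\Delta t}}\circ\phi_{f}^{\sqrt{\Delta t}}\circ\phi_{-g}^{\sqrt{\Delta t}}\circ\phi_{-f}^{\sqrt{\Delta t}}\bigr)^{\circ n}$ lets the hypothesis space reach $\operatorname{Lie}(\mathcal{F}_{\mathrm{ass}}(f))$ rather than just $\operatorname{span}$, and identities such as $[x^{n},x^{m}]=(m-n)x^{n+m-1}$ (together with a dedicated computation reproducing the generating quadratic fields of Cuchiero et al.\ when $\deg f=2$ and $d\ge2$) are what make the reachable vector fields dense. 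Your argument for Condition~1 survives only because global Lipschitz continuity forces a nonlinear scalar component to be non-polynomial; even there the paper must first apply a difference trick ($x\mapsto f(x+ce_1)-f(x)$, Proposition~\ref{th:property_of_nonlinearity}) to isolate a non-polynomial function of a single coordinate before invoking the shallow-network theorem, since general conjugations destroy coordinate-separability and the outer coefficient of each term in $\mathcal{V}(f)$ is tied to the inner slope.

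For Condition~3 you correctly identify the $S=W$ coupling as ``the principal obstacle,'' but you do not resolve it, and it is fatal to the convolution argument as stated: approximating $v*f_{\delta}$ by Riemann sums requires terms of the form $S_{j}f(A^{-1}x+b_{j})$ with output matrices $S_{j}$ (depending on the sampled values $v(y_{j})$) that are \emph{not} inverses of the inner matrices, which is exactly what $\mathcal{H}(\mathcal{F}_{\mathrm{ass}}(f))$ cannot produce. The paper avoids this entirely by a different reduction: it forms $\bar f(x)=\int_{\mathbb{R}^{d-1}}f(x_{1},y)\,dy$ using only Riemann sums of \emph{translates} of $f$ (which require no conjugation at all), observes that $\bar f$ depends on $x_{1}$ alone with a nonlinear first component, suppresses the remaining components by conjugating with $\diag(1,1/\varepsilon,\dots,1/\varepsilon)$, and thereby lands back in the coordinate-separable Condition~1. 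You would need either this marginalization device or some other way around the tied-coefficient constraint before your Condition~3 argument can be completed.
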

Here, $f$ is called coordinate-separable if $f(x) = (f_1(x_1), \dots, f_d(x_d))$ for all $x=(x_1,\cdots,x_d)\in \mathbb{R}^d$ where each $f_i$ is scalar function of one variable; And the special case where all $f_i$ are identical is referred to as an elementwise function.

The first condition in Theorem~\ref{th:UAP_of_F_ass} requires $f$ to be globally Lipschitz continuous, which ensures the global existence and uniqueness of the flow maps $\phi_f^\tau$ for all $\tau \in \mathbb{R}$. The assumption that $f$ is coordinate-separable is motivated by activation functions in neural networks, where the non-polynomial activation is applied in an elementwise manner \cite{Cybenkot1989Approximation,Hornik1989Multilayer, Leshno1993Multilayer, pinkus1999approximation}. In this setting, a remarkably simple result holds: for $\mathcal{F}_{\mathrm{ass}}(f)$, the necessary and sufficient condition to achieve UAP is that $f$ be nonlinear. The necessity follows from the fact that when $f$ is affine, the family $\mathcal{F}_{\mathrm{ass}}(f)$ reduces to the control family $\mathcal{F}_0$ mentioned earlier, whose flow maps are themselves affine functions and therefore cannot achieve UAP.

The second condition in Theorem~\ref{th:UAP_of_F_ass} relaxes globally Lipschitz continuity to local Lipschitz continuity. In this case, the flow map $\phi_f^\tau$ may exhibit finite-time blow-up, so it is defined only on a bounded set, and the time horizon $\tau$ cannot be arbitrarily large. Nevertheless, since our notion of UAP is considered only on compact sets, this restriction on the flow maps does not pose any essential difficulty.
Thus, it may appear that the nonlinearity of $f$ alone is sufficient. However, there is a special case: when the dimension equals one and $f$ is a quadratic function. In this situation, the flow maps are fractional linear transformations, also called Möbius transformations \cite[Chap. 2, Sec. 1.4]{Ahlfors1979Complex}, and their compositions with affine functions remain Möbius transformations, which prevents the realization of UAP. Excluding this exceptional case, any other nonlinear $f$ indeed enables $\mathcal{F}_{\mathrm{ass}}(f)$ to achieve UAP.

The third condition in Theorem~\ref{th:UAP_of_F_ass} drops the requirement that $f$ be coordinate-separable, which significantly increases the analytical difficulty. In this case, even if $f$ is globally Lipschitz continuous and nonlinear, this alone is not sufficient to guarantee UAP. For instance, consider the following two-dimensional example $f$: which merely permutes the two components of the ReLU function. This simple modification already causes $\mathcal{F}_{\mathrm{ass}}(f)$ to lose the ability to achieve UAP.
\begin{example}
    \label{example:permute_ReLU}
 Let function $f$ be the map $(x_1,x_2) \mapsto (\mathrm{ReLU}(x_2),\mathrm{ReLU}(x_1))$, then the associated affine control family $\mathcal{F}_{\mathrm{ass}}(f)$ cannot achieve the UAP.
\end{example}
The key observation is that in this example, the divergence of $f(x)$ vanishes everywhere, which implies that the Jacobian determinant of the flow map $\phi_f^\tau$ is constant and independent of $x$. Consequently, the Jacobian determinants of all functions in the hypothesis space $\mathcal{H}(\mathcal{F}_{\mathrm{ass}}(f))$ are also constant. This prevents the approximation of orientation-preserving diffeomorphisms whose Jacobian determinants are variable.
The third condition requires that $f$ is $L^1$-integrable and that its gradient decays at infinity. These are indeed rather strong assumptions. We believe that weaker conditions can be formulated, and we leave this question for future research. A typical example satisfying the current assumptions is given below.
\begin{example}
    \label{example:Gaussian}
 If each component of $f$ is Gaussian, then the associated affine control family $\mathcal{F}_{\mathrm{ass}}(f)$ achieves the UAP.
\end{example}

Our second result concerns the expressivity power of diagonal affine invariant control families.
\begin{theorem}[UIP of diagonal affine invariant control families]
    \label{th:UIP_of_F_diag}
 Assume $f:\mathbb{R}^d \to \mathbb{R}^d$ is globally Lipschitz continuous, and if
    \begin{enumerate}
    \item[1)] $f$ is nonlinear, then $\mathcal{F}_{\operatorname{aff}}(f)$ achieves UAP and UIP.
    \item[2)] $f$ is fully coordinate nonlinear, then $\mathcal{F}_{\operatorname{diag}}(f)$ achieves UAP and UIP.
    \end{enumerate}
\end{theorem}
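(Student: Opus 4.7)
The plan is to prove UAP and local UIP separately for each family and then invoke the paper's general implication that UAP combined with local UIP yields UIP. The UAP arguments for (1) and (2) follow a common template: first show that the linear span of the family is $C(\Omega,\mathbb{R}^d)$-dense on every compact $\Omega$, and then lift this density to approximation of arbitrary flows (and hence arbitrary elements of $\mathrm{Diff}_0(\mathbb{R}^d)$) via a Trotter-type splitting, using Proposition~\ref{prop:composition_approximation} applied to small-time steps whose vector fields sum to a prescribed smooth target.

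For part (1), the hypothesis that $f$ is nonlinear and globally Lipschitz continuous forces $f$ to have at least one non-polynomial component: a scalar polynomial of degree $\geq 2$ fails global Lipschitzness, while polynomials of degree $\leq 1$ are affine and cannot supply nonlinearity. A classical universal approximation theorem for non-polynomial activations (Leshno et al.) then shows, by taking $S$ of rank one to isolate a non-polynomial component $f_k$ and restricting $f_k$ to a line along which it remains non-polynomial (such a line exists since $f_k$ is globally Lipschitz and non-affine), that $\mathrm{span}\{x \mapsto Sf(Wx+b) : S,W,b\}$ is $C(\Omega,\mathbb{R}^d)$-dense. Combined with the Trotter splitting above, this gives the UAP of $\mathcal{H}(\mathcal{F}_{\text{aff}}(f))$.

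For part (2), the fully coordinate nonlinear condition is exploited componentwise. For each pair $(i,j)$, choose $\Lambda$ diagonal with only the $j$-th entry nonzero, $b = b_{ij}$ so that $t \mapsto f_i(te_j + b_{ij})$ is nonlinear (hence non-polynomial, by the same Lipschitz argument), and $D$ diagonal supported on the $i$-th entry. The resulting element of $\mathcal{F}_{\text{diag}}(f)$ has only one nonzero component, the $i$-th, equal to a nonlinear scalar function of $x_j$ alone. Sweeping over scaling and translation parameters and invoking the one-dimensional Leshno theorem shows that the span of $\mathcal{F}_{\text{diag}}(f)$ contains a $C(\Omega)$-dense subset of vector fields of the form $g(x_j)e_i$. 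By summing these fields, composing the associated flows, and exploiting the non-commutativity of flows in distinct coordinate directions, the hypothesis space $\mathcal{H}(\mathcal{F}_{\text{diag}}(f))$ approximately contains $\mathcal{H}(\mathcal{F}_{\text{ass}}(\tilde f))$ for a coordinate-separable nonlinear $\tilde f$, whose UAP is supplied by Theorem~\ref{th:UAP_of_F_ass}.

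For local UIP in both cases, fix $N$ distinct points $x_1,\ldots,x_N \in \mathbb{R}^d$ in general position and consider the evaluation map $E:\varphi \mapsto (\varphi(x_1),\ldots,\varphi(x_N))$ from $\mathcal{H}$ to $(\mathbb{R}^d)^N$. The density obtained in the UAP step lets us construct, for every pair $(i,k)$ with $1 \leq i \leq N$ and $1 \leq k \leq d$, a vector field $g_{i,k} \in \overline{\mathrm{span}(\mathcal{F})}$ that is strongly localized near $x_i$ and whose value there approximates $e_k$; these $Nd$ near-bump fields produce $Nd$ linearly independent tangent directions for $E$ at the identity, so an inverse function theorem argument in the Banach space of piecewise-constant control parameters yields local UIP. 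Combining UAP with local UIP via the paper's general implication then gives UIP. The main obstacle is the UAP of $\mathcal{F}_{\text{diag}}(f)$: the diagonal constraint on $\Lambda$ and $D$ turns the family into a neural-network-like object with diagonally restricted weights, and a direct Leshno argument produces only vector fields whose components are sums of one-variable functions; verifying that compositions of the corresponding flows nevertheless approximate arbitrary orientation-preserving diffeomorphisms, via the reduction to $\mathcal{F}_{\text{ass}}(\tilde f)$ above and the non-commutativity of coordinate-direction flows, is the most delicate step.
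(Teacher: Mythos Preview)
Your overall strategy---establish UAP, establish local UIP, then invoke Theorem~\ref{th:localUIP_to_UIP}---matches the paper's. The UAP sketches are in line with the prior work the paper cites (Duan et al.\ for both families; for part~(1) the paper actually cites Cheng et al.\ directly for full UIP, so your detour through local UIP there is unnecessary but not wrong).

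Where you genuinely diverge is the local UIP argument for $\mathcal{F}_{\mathrm{diag}}(f)$. The paper works on the lifted $Nd$-dimensional system over the connected open set $\Omega_N=\{(x_1,\dots,x_N):x_j^{(k)}<x_{j'}^{(k)}\text{ for all }j<j',k\}$ and shows $\operatorname{span}(\mathcal{F}_N(X_N))=\mathbb{R}^{dN}$ for every $X_N\in\Omega_N$ by a Fourier-transform contradiction: if a nonzero $c$ were orthogonal to the span, evaluating at $g=E_{KK}f(\Lambda\cdot+b)$ and Fourier-transforming in $b$ forces $\sum_j c_j^{(K)}e^{\mathrm{i}s\xi_1 x_j^{(1)}}=0$ for all $s$, contradicting linear independence of the exponentials. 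Chow--Rashevskii then gives exact controllability on $\Omega_N$. Your route instead constructs near-bump fields $g_{i,k}$ via one-variable Leshno approximation (using the non-polynomial slice $t\mapsto f_k(te_j+b_{kj})$) and appeals to the inverse function theorem. Both approaches ultimately verify the same span condition on the lifted system; the paper's Fourier argument handles all of $\Omega_N$ at once and plugs directly into Chow--Rashevskii, while your approach needs only a single point configuration and a finite-dimensional IFT, which is lighter but requires you to pass from fields in the closure $\overline{\operatorname{span}(\mathcal{F})}$ to actual elements of $\mathcal{F}$ (spanning is open, so this is fine once stated).

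One phrasing to tighten: you write that ``the density obtained in the UAP step'' furnishes the bump fields, but UAP is density of flow maps in $\mathrm{Diff}_0$, not density of $\operatorname{span}(\mathcal{F})$ among vector fields. What you actually need---and what you did establish earlier for $\mathcal{F}_{\mathrm{diag}}(f)$---is density of one-variable ridge functions $h(x_j)e_i$, which suffices to interpolate at points with pairwise distinct coordinates. Make that dependence explicit, and specify ``general position'' as, e.g., all first coordinates distinct.
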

For comparison, the first part of Theorem~\ref{th:UIP_of_F_diag} presents the UAP and UIP results for affine invariant control families, which summarize existing works. Specifically, for a nonlinear globally Lipschitz continuous function $f$, Duan et al. \cite{Duan2025Minimal} proved that $\mathcal{F}_{\mathrm{aff}}(f)$ achieves the UAP, while Cheng et al. \cite{Cheng2023Interpolation} established that $\mathcal{F}_{\mathrm{aff}}(f)$ achieves the UIP. By combining these results, one readily observes that under the assumption that $f$ is globally Lipschitz continuous, the UAP and the UIP for $\mathcal{F}_{\mathrm{aff}}(f)$ are in fact equivalent, since in both cases the necessary and sufficient condition is the nonlinearity of $f$.

In the case of diagonal affine invariance, Duan et al. \cite{Duan2025Minimal} proved that when $f$ is fully coordinate nonlinear, $\mathcal{F}_{\mathrm{diag}}(f)$ achieves the UAP. As for whether $\mathcal{F}_{\mathrm{diag}}(f)$ can achieve the UIP, Cheng et al. \cite{Cheng2023Interpolation} provided a sufficient condition from the perspective of the Fourier transform of $f$: each component $f_j$ must admit a spectral point $\xi_j=(\xi_{j1},...,\xi_{jd})$ that does not lie on any coordinate hyperplane, \emph{i.e.} the product $\xi_{j1} \cdots \xi_{jd} \ne 0$. This condition is both difficult to verify and rather strong. In fact, we can show that any $f$ satisfying this condition must necessarily be fully coordinate nonlinear; see Property~\ref{prop:fourier_fully_coordinate_nonlinear} for details. Moreover, the following two-dimensional example demonstrates that there exist functions that are fully coordinate nonlinear but do not satisfy the condition given by Cheng et al. Therefore, fully coordinate nonlinear functions contain a broader class of functions.
\begin{example}
    \label{example:coordinate_nonlinear}
 Let $f=(f_1,f_1) \in C(\mathbb{R}^2,\mathbb{R}^2)$ where $f_1(x_1, x_2) = \sin x_1 + \sin x_2$ is a scalar function whose Fourier transformation $\hat{f}_1$ is supported only on four points: 
 \begin{align}
	 \operatorname{supp}(\hat{f}_1) = \{(1,0), (-1,0), (0,1), (0,-1)\} \subset \mathbb{R}^2.
 \end{align}
 Then $f$ is fully coordinate nonlinear and the diagonal affine invariant control family $\mathcal{F}_{\operatorname{diag}}(f)$ achieves the UAP and UIP.
\end{example}
To prove that such $\mathcal{F}_{\mathrm{diag}}(f)$ achieves UIP, we no longer study UAP and UIP separately using different methods; instead, we attempt to establish a connection between them. This idea is formalized in our final main result, Theorem~\ref{th:localUIP_to_UIP}, which states that once UAP holds, it suffices to verify the more easily checkable local UIP to guarantee UIP.

\begin{theorem}[$C$-UAP + local UIP $\Rightarrow$ UIP]
    \label{th:localUIP_to_UIP}
 For a symmetric control family $\mathcal{F}$, if  $\mathcal{H}(\mathcal{F})$ has $C$-UAP for $\mathrm{Diff}_0(\mathbb{R}^d)$ and has local UIP,
 then $\mathcal{F}$ achieves UIP.
\end{theorem}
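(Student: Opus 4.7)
The plan is to reduce any interpolation problem to the local-UIP setting by sandwiching a local correction between two UAP-based coordinate changes. The symmetry hypothesis on $\mathcal{F}$ is used to invert elements of $\mathcal{H}(\mathcal{F})$: reversing the order of the flow-map composition in (\ref{eq:flow_map_composition}) and flipping the sign of each vector field produces the inverse flow, which again lies in $\mathcal{H}(\mathcal{F})$ because $-f\in\mathcal{F}$ whenever $f\in\mathcal{F}$.

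Given arbitrary distinct data $\{(a_i,b_i)\}_{i=1}^N$, I first take the reference points $x_1,\ldots,x_N$ and tolerance $\delta>0$ supplied by local UIP. A classical transitivity property of $\mathrm{Diff}_0(\mathbb{R}^d)$ on ordered $N$-tuples of distinct points (together with the order-preservation assumption made in the paper for $d=1$) produces $T_1,T_2\in\mathrm{Diff}_0(\mathbb{R}^d)$ with $T_1(a_i)=x_i$ and $T_2(b_i)=x_i$ for all $i$. Applying $C$-UAP on a compact set containing $\{a_i\}\cup\{b_i\}$ yields $\tilde T_1,\tilde T_2\in\mathcal{H}(\mathcal{F})$ approximating $T_1,T_2$; by taking the approximation tolerance small enough I arrange that $y_i:=\tilde T_1(a_i)$ and $w_i:=\tilde T_2(b_i)$ are each a pairwise distinct set and satisfy $\|y_i-x_i\|<\delta$ and $\|w_i-x_i\|<\delta$. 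Local UIP then provides $\varphi_1,\varphi_2\in\mathcal{H}(\mathcal{F})$ with $\varphi_1(x_i)=y_i$ and $\varphi_2(x_i)=w_i$, and by symmetry $\varphi_1^{-1}$ and $\tilde T_2^{-1}$ also lie in $\mathcal{H}(\mathcal{F})$.

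The candidate interpolant is the four-factor composition
\begin{equation*}
  \psi \,:=\, \tilde T_2^{-1}\circ\varphi_2\circ\varphi_1^{-1}\circ\tilde T_1 \,\in\, \mathcal{H}(\mathcal{F}),
\end{equation*}
and tracing the images $a_i\mapsto y_i\mapsto x_i\mapsto w_i\mapsto b_i$ under $\tilde T_1,\varphi_1^{-1},\varphi_2,\tilde T_2^{-1}$ respectively shows that $\psi(a_i)=b_i$ for every $i$. I expect the main obstacle to be the quantitative bookkeeping in the UAP step: since $C$-UAP is only stated on compact sets in the sup norm, the tolerances must be tuned so that distinctness and $\delta$-closeness of $\{y_i\}$ and $\{w_i\}$ both survive, which is routine once a compact enlargement of the data points is fixed. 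Conceptually, the real content of the theorem is that UAP allows one to transport any data configuration into a neighborhood of the single fixed configuration $\{x_i\}$ at which exact interpolation is already available by hypothesis, so the two properties fit together as the outer and inner layers of a single interpolating diffeomorphism.
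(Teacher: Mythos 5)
Your proposal is correct and follows essentially the same route as the paper: the four-factor composition $\tilde T_2^{-1}\circ\varphi_2\circ\varphi_1^{-1}\circ\tilde T_1$ is, up to relabeling, exactly the paper's $\phi_2^{-1}\circ\phi_4\circ\phi_3^{-1}\circ\phi_1$, with UAP transporting both data configurations into the $\delta$-neighborhood of the local-UIP reference points and symmetry supplying the needed inverses. The only cosmetic difference is that you invoke transitivity of $\mathrm{Diff}_0(\mathbb{R}^d)$ on ordered tuples directly, where the paper cites its Theorem 2.6 for the same fact.
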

Here, ``symmetric'' means that if $f \in \mathcal{F}$, then $-f \in \mathcal{F}$ as well. This ensures that the hypothesis space $\mathcal{H}(\mathcal{F})$ is closed under function inversion. Exploiting this observation, we can reduce a flow map that interpolates between given data points to a composition of two flow maps, each achieving local interpolation, 
where the $C$-UAP guarantees that points can be mapped to the local neighborhoods of the target points.

Theorem~\ref{th:localUIP_to_UIP} establishes, to some extent, a connection between UAP and UIP, which benefits from our notion of UAP considered in the uniform norm. If one were to use the $L^p$ norm instead, such a connection generally does not hold, as counterexamples can be constructed to show that $L^p$-UAP and UIP are not equivalent \cite{Cheng2023Interpolation}. 
Moreover, if we only assume $L^p$-UAP and local UIP, it is unclear whether UIP follows, since $L^p$-UAP does not guarantee that each point can be mapped into the local neighborhood of its corresponding target point.
Theorem~\ref{th:localUIP_to_UIP} is the key to the proof of Theorem~\ref{th:UIP_of_F_diag} since the local UIP of $\mathcal{H}(\mathcal{F}_{\mathrm{diag}}(f))$ is easier to verify. 
Moreover, we can leverage local UIP to establish the UIP for the associated affine control family $\mathcal{F}_{\mathrm{ass}}(\mathrm{ReLU})$ as follows.
\begin{corollary}
    \label{th:UIP_ReLU}
 The associated affine control family $\mathcal{F}_{\mathrm{ass}}(\mathrm{ReLU})$ achieves UIP.
\end{corollary}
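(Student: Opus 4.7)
The plan is to invoke Theorem~\ref{th:localUIP_to_UIP}, which reduces the corollary to verifying three properties of $\mathcal{F}_{\mathrm{ass}}(\mathrm{ReLU})$: it is symmetric, its hypothesis space has $C$-UAP for $\mathrm{Diff}_0(\mathbb{R}^d)$, and it has local UIP. The first two are essentially immediate. Symmetry is clear because $\mathcal{F}_0$ is closed under $Ax+b\mapsto -Ax-b$ and the set $\{\pm\mathrm{ReLU}\}$ is symmetric by construction. The $C$-UAP follows directly from part~(1) of Theorem~\ref{th:UAP_of_F_ass}, since $\mathrm{ReLU}$ is elementwise (hence coordinate-separable), globally $1$-Lipschitz, and nonlinear.

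The substantive work is establishing local UIP. I would fix reference points $x_i = t_i e_1 \in \mathbb{R}^d$ with $0 < t_1 < \cdots < t_N$ and consider a finite-dimensional parametric family of compositions $\varphi_{\tau,\alpha} = \phi^{\tau_M}_{g_M} \circ \cdots \circ \phi^{\tau_1}_{g_1}$, where each $g_m$ is drawn from $\mathcal{F}_{\mathrm{ass}}(\mathrm{ReLU})$. The goal is then to show that the evaluation map
\[
(\tau,\alpha) \;\mapsto\; (\varphi_{\tau,\alpha}(x_1),\dots,\varphi_{\tau,\alpha}(x_N)) \in (\mathbb{R}^d)^N
\]
has full rank $dN$ at parameters corresponding to the identity map, so that the inverse function theorem delivers local surjectivity onto a neighborhood of $(x_1,\dots,x_N)$, which is exactly local UIP.

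To build this rank, I plan to use two ingredients. First, conjugating a $\pm\mathrm{ReLU}$ flow by an affine shift $x\mapsto x-be_1$ produces a coordinatewise leaky-ReLU map with a single kink at $b$ in the first coordinate; placing $N-1$ kinks at values $b_j\in(t_j,t_{j+1})$ with independent flow times yields $N-1$ linearly independent perturbation directions for the first coordinates of the $x_i$, and one additional affine translation in $e_1$ supplies the $N$th direction. Second, interleaving with orientation-preserving rotations that move other coordinate axes into the first slot (and rotate back) together with analogous kink constructions applied to those rotated coordinates provides the remaining $(d-1)N$ directions, completing the required $dN$ degrees of freedom.

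The main obstacle is the non-smoothness of $\mathrm{ReLU}$, which blocks the standard Lie-bracket approach to controllability. I would work around this by exploiting the fact that, once the shift values $b_j$ are chosen strictly between consecutive $t_j$, each reference point $x_i$ lies in the interior of a single linear piece of every leaky-ReLU map involved, so no $x_i$ sits on a ``kink hyperplane''. Consequently, in a small neighborhood of the chosen parameters the evaluation map is genuinely smooth in $(\tau,\alpha)$, and the full-rank verification reduces to an explicit piecewise-affine Jacobian computation where the various shift and rotation parameters act on disjoint blocks. With local UIP secured in this way, Theorem~\ref{th:localUIP_to_UIP} immediately yields UIP for $\mathcal{F}_{\mathrm{ass}}(\mathrm{ReLU})$.
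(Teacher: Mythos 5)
Your reduction is exactly the paper's: check symmetry and $C$-UAP (both as you say), verify local UIP, and invoke Theorem~\ref{th:localUIP_to_UIP}. Where you diverge is in \emph{how} local UIP is established. The paper's proof is a direct inductive construction at the level of flow maps: with $x_i=ie_1$ and $\delta=1/(2N)$, it uses the fact that $\phi^\tau_{\pm\mathrm{ReLU}}$ is a leaky-ReLU map (identity on one half-space, contracting/expanding on the other) to build $\Phi=\Phi_N\circ\cdots\circ\Phi_1$, where $\Phi_{i+1}$ moves $x_{i+1}$ to $y_{i+1}$ while fixing the points already placed. Your route is instead first-order: show that the derivative of the parameter-to-evaluation map has rank $dN$ at the identity and apply the inverse function theorem. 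This is essentially the span/Chow--Rashevskii argument the paper uses for $\mathcal{F}_{\mathrm{diag}}(f)$, transplanted to $\mathcal{F}_{\mathrm{ass}}(\mathrm{ReLU})$; it is a legitimate alternative and arguably more systematic, at the cost of more bookkeeping.

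Two details in your rank computation would fail as literally written and need repair. First, a rotation that ``moves another coordinate axis into the first slot'' (i.e.\ by $\pi/2$) sends every $x_i=t_ie_1$ to a point with first coordinate $0$, so the kink construction in the rotated frame sees $N$ coincident first coordinates and produces no new independent directions; you need a rotation by a generic angle (or a shear such as $I+E_{21}$, conjugating ReLU as $A^{-1}\mathrm{ReLU}(Ax+b)$), so that the rotated points retain distinct first coordinates and rotating back yields an $e_k$-component with an $i$-dependent kinked coefficient. Second, your smoothness claim is not automatic: the points $x_i=t_ie_1$ sit exactly on the kink hyperplanes $\{x_k=0\}$, $k\ge 2$, of the elementwise ReLU, so the evaluation map is only one-sidedly differentiable in some parameters at the base point unless you also shift the kinks in coordinates $2,\dots,d$ away from $0$ or place the reference points off all relevant hyperplanes (e.g.\ along the diagonal $t_i(e_1+\cdots+e_d)$, which also lets plain shifts $be_k$ replace the rotations entirely). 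Relatedly, at the identity all flow times are $0$ and are constrained to be nonnegative, so the inverse function theorem is applied at a boundary point of the parameter domain; you need the symmetry of the family (both $\pm\mathrm{ReLU}$ and $\pm(Ax+b)$ available) to realize each tangent direction and its negative with positive times. With these fixes your argument goes through, but the paper's explicit leaky-ReLU construction avoids all of this.
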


\subsection{Discussion on the difference between $\mathcal{F}_{\mathrm{ass}}(f)$ and $\mathcal{F}_{\mathrm{aff}}(f)$}

The control families considered in this paper are all related to affine functions together with a nonlinear function $f$.  
In $\mathcal{F}_{\mathrm{ass}}(f)$, the function $f$ is directly included in the control family, and its interaction with affine functions is established through the composition of flow maps. For instance, one can verify that the following function $\psi$ belongs to $\mathcal{H}(\mathcal{F}_{\mathrm{ass}}(f))$: 
\begin{align}
    \label{eq:psi_flowmap}
    \psi(x) = S \phi^\tau_f( W x + b), \quad \{S,W\} \subset \mathbb{R}^{d \times d}, b \in \mathbb{R}^d, \tau \in \mathbb{R},
\end{align}
where the matrices $S$ and $W$ have positive determinants. In contrast, in $\mathcal{F}_{\mathrm{aff}}(f)$, the function $f$ is first composed with affine functions before being added into the control family:
\begin{align}
    \label{eq:g_SfWxb}
 g(\cdot) = S f( W \cdot + b) \in \mathcal{F}_{\mathrm{aff}}(f), \quad \{S,W\} \subset \mathbb{R}^{d \times d}, b \in \mathbb{R}^d,
\end{align}
where the matrices $S$ and $W$ are not required to have positive determinants; they can be arbitrary square matrices.
In what follows, we devote some attention to discussing the distinctions and connections between these two formulations.

First, let us recall the motivation behind introducing these two types of control families.  The works of Li et al. \cite{Li2022Deep} and Tabuada et al. \cite{Tabuada2022Universal} both consider affine-invariant control families,  where vector fields of the form (\ref{eq:g_SfWxb}) are studied because their flow maps can be viewed as a continuous analogue of residual networks (ResNet).  Thus, $\mathcal{F}_{\mathrm{aff}}(f)$ may be regarded as a \emph{ResNet-type} family.  
In contrast, the flow maps of the associated affine control family are given by compositions of functions of the form (\ref{eq:psi_flowmap}),  which correspond structurally to a feedforward neural network (FNN), with the flow map $\phi_f^\tau$ playing the role of the activation function \cite{Duan2022Vanilla,Duan2025Minimal}. Therefore, $\mathcal{F}_{\mathrm{ass}}(f)$ can be interpreted as an \emph{FNN-type} family.  
In summary, both control families have direct connections to neural networks, with the main difference lying in the position where the nonlinearity $f$ is introduced.

Next, we turn to the difference in expressive power between the two families.  
A comparison of Theorems~\ref{th:UAP_of_F_ass} and Theorem~\ref{th:UIP_of_F_diag} suggests that for a fixed nonlinear $f$, $\mathcal{H}(\mathcal{F}_{\mathrm{aff}}(f))$ is in fact more expressive than $\mathcal{H}(\mathcal{F}_{\mathrm{ass}}(f))$,  
since $\mathcal{F}_{\mathrm{aff}}(f)$ is essentially larger than $\mathcal{F}_{\mathrm{ass}}(f)$. Indeed, $f \in \mathcal{F}_{\mathrm{aff}}(f)$, and any affine transformation $Ax+b$ can be approximated by functions of the form  
$
S f(Wx+b'),
$
where the approximation is possible because $f$ admits a local affine approximation in regions where it is differentiable.  
By choosing $S$, $W$, and $b'$ appropriately, this local affine approximation can be magnified to cover arbitrarily large regions,  thus ensuring that affine functions are well-approximated within $\mathcal{F}_{\mathrm{aff}}(f)$.
To observe this difference more concretely, let us revisit the nonlinear function $f$ in Example~\ref{example:permute_ReLU}. Recall that the associated affine family $\mathcal{F}_{\mathrm{ass}}(f)$ fails to achieve the UAP.  In contrast, Theorem~\ref{th:UIP_of_F_diag} guarantees that the affine invariant family $\mathcal{F}_{\mathrm{aff}}(f)$ does achieve both the UAP and the UIP.  Moreover, the following example shows that even a reduced version of $\mathcal{F}_{\mathrm{aff}}(f)$ already suffices to obtain the UAP and UIP. 
\begin{example}
    \label{example:simple_F_aff}
 Consider the function $f$ in Example~\ref{example:permute_ReLU}, we have that the control family $\mathcal{F}=\{\pm f(Ax+b) ~|~ A\in\mathbb{R}^{2\times 2}, b\in \mathbb{R}^2\}$ achieves the UAP and UIP.
\end{example}

Finally, we point out that $\mathcal{F}_{\mathrm{ass}}(f)$ can be contained in a $m$-dimensional function space $\mathcal{F}$ where $m=d^2+d+1$. In fact, the set of basis functions of $\mathcal{F}$ can be chosen as following set $\mathcal{B}$: 
\begin{align}
 \mathcal{B} = \{ x \mapsto E_{ij}x ~|~ i,j=1,...,d \} ~\cup~
    \{ x \mapsto e_i ~|~ i=1,...,d \} ~\cup~ \{f\},
\end{align}
where $E_{ij}$ denotes the square matrix with a single nonzero entry $1$ at the $(i,j)$-th position, and $e_i$ denotes the $i$-th standard basis vector. As a consequence, analyzing the expressive power of $\mathcal{H}(\mathcal{F}_{\mathrm{ass}}(f))$ can be reformulated as studying the controllability of the affine control system~(\ref{eq:affine_control_system}) where the $m$ functions are precisely chosen to be those contained in the set $\mathcal{B}$. 
In contrast, $\mathcal{F}_{\mathrm{aff}}(f)$ generally cannot be represented as a subset of a finite-dimensional function space;  
indeed, $\operatorname{span}(\mathcal{F}_{\mathrm{aff}}(f))$ is infinite-dimensional.  Consequently, the expressive power of $\mathcal{H}(\mathcal{F}_{\mathrm{aff}}(f))$ cannot be reformulated as the controllability problem of the affine control system~(\ref{eq:affine_control_system}).

\section{Proofs of the Theorems}
\label{sec:proof}

In this section, we provide detailed proofs of the main theorems stated in Section~\ref{sec:results}. Before presenting the proofs of the theorems individually, we first outline the overall strategy and key ideas that support our approach.

\subsection{Proof strategy and overview}

Since the proof of UIP will be reduced to establishing local UIP together with UAP, and local UIP is comparatively easier to verify, we will mainly focus here on the techniques used in proving UAP.

As mentioned earlier, when the control family $\mathcal{F}$ already possesses sufficiently rich expressive power, for instance $\mathcal{F}$ is dense in $C(\mathcal{K},\mathbb{R}^d)$ for every compact domain $\mathcal{K}\subset \mathbb{R}^d$, the hypothesis space $\mathcal{H}(\mathcal{F})$ can approximate any orientation-preserving diffeomorphism arbitrarily well. Thus, to prove that a general control family $\mathcal{F}$ achieves the UAP, a natural idea is to choose an essentially larger reference family $\mathcal{F}^*$ such that $\mathcal{H}(\mathcal{F}^*)$ has the UAP for $\mathrm{Diff}_0(\mathbb{R}^d)$, while every element of $\mathcal{H}({\mathcal{F}^*})$ can in turn be approximated arbitrarily well by elements of $\mathcal{H}(\mathcal{F})$.
Our proof strategy essentially follows this idea: we augment the control family $\mathcal{F}$ step by step in accordance with this principle.

Since the hypothesis spaces $\mathcal{H}(\mathcal{F})$ and $\mathcal{H}({\mathcal{F}^*})$ are both constructed through function compositions, and the approximation property of compositions is guaranteed by Proposition~\ref{prop:composition_approximation}, it suffices to verify that for any $f^* \in \mathcal{F}^*$ and $\tau>0$, the flow map $\phi_{f^*}^\tau$ belongs to the closure of $\mathcal{H}(\mathcal{F})$ under the topology of $C(\Omega)$ for any compact set $\Omega\subset \mathbb{R}^d$.
For the reference family $\mathcal{F}^*$, the result of Duan et al. \cite{Duan2025Minimal} provides a convenient and easily verifiable choice, namely  
\begin{align}
 \mathcal{F}^* = \mathcal{H}(\mathcal{F}_{\mathrm{ass}}(\mathrm{ReLU})).
\end{align}
The flow maps associated with this control family are simple, and we will use them as the foundation for the subsequent proofs in this paper.
Specifically, when proving that $\mathcal{F}_{\mathrm{ass}}(f)$ achieves the UAP, it suffices to show that the flow mapping $\phi^{\tau}_{\mathrm{ReLU}}$ can be arbitrarily well approximated by elements of $\mathcal{H}(\mathcal{F}_{\mathrm{ass}}(f))$.  
For $\mathcal{F}_{\mathrm{aff}}(f)$, we additionally need to approximate $\phi^{\tau}_{Ax+b}$, but this can be achieved simply by locally zooming in the function $f$. Therefore, the essential difficulty still lies in the approximation of $\phi^{\tau}_{\mathrm{ReLU}}$.

The key step of our proof is to construct elements in $\mathcal{H}(\mathcal{F})$ that approximate $\phi^{\tau}_{\mathrm{ReLU}}$.  We summarize below a collection of useful techniques that will be employed in this construction. We refer to these techniques collectively as the \emph{augmentation trick}, since they are all designed to augment the expressive power of compositions of the form $\phi_f^t$, with $f \in \mathcal{F}$.

\textbf{(1) Augmentation via basic properties of flow map.}

First, we can directly use the properties of flow maps to augment the control family. Let $s,a \in \mathbb{R}$, $A \in \mathbb{R}^{d \times d}$, and $b \in \mathbb{R}^d$, then we have the following relationship between flow maps: 
\begin{align}
    \phi_g^t &= 
 (a^{-1} \cdot -a^{-2}b)\circ \phi_g^{s t} \circ (a \cdot+a^{-1}b),
    & g(x)& = s f(a x + b), \quad a \neq 0,
    \\
    \phi^t_h &= 
 (A^{-1} \cdot -A^{-2}b)\circ \phi_h^{t} \circ (A \cdot+A^{-1}b),
    &h(x) &= A^{-1} f(Ax + b), \quad \det A \neq 0.
\end{align}
These relations suggest that by applying suitable translations and scalings to $f$, their flow maps can be represented as compositions of several such mappings. 
It should be noted that in the second relation, $A$ and $A^{-1}$ appear as a pair;  
we cannot replace $A^{-1}$ in $h$ with an arbitrary square matrix.  
This restriction is precisely why $\mathcal{F}_{\mathrm{ass}}(f)$ differs from $\mathcal{F}_{\mathrm{aff}}(f)$.

As an application of the first relation, we can directly show that when $f$ is the elementwise softplus function,  
$
f(x) = \ln(1 + e^x),
$  
the associated affine family $\mathcal{F}_{\mathrm{ass}}(f)$ achieves the UAP.  
This is because the softplus function can approximate ReLU arbitrarily well under suitable scaling: $a^{-1} f(ax)$ tends to $\mathrm{ReLU}(x)$ as $a$ tends to infinity. 
For the second relation, we give a two-dimensional example to show the usage. Consider $f$ as the function $(x_1,x_2) \mapsto (\mathrm{ReLU}(x_1),0)$ which modify the second element of ReLU to zero, \emph{i.e.} $f = \mathrm{ReLU}\circ E_{11} = E_{11} \circ \mathrm{ReLU}$. We can choose a special matrix $A$ such that $A^{-1} f(Ax) = E_{22} \circ \mathrm{ReLU}(x)$,
\begin{align}
    \label{eq:example_A_2d}
 A 
    & = \left( \begin{matrix}
        0 & 1 \\
 -1 & 0
    \end{matrix}\right),
 A^{-1} = -A,
 A^{-1} f(Ax) = 
 \left( \begin{matrix}
        0  \\
 \mathrm{ReLU}(x_2)
    \end{matrix}\right).
\end{align}
Combined with the linear span techniques introduced later,  
and noting that $E_{11} \circ \mathrm{ReLU} + E_{22} \circ \mathrm{ReLU} = \mathrm{ReLU}$ and $A,A^{-1} \in \mathcal{H}(\mathcal{F}_0)$,  
we can conclude that in this case $\mathcal{F}_{\mathrm{ass}}(f)$ also attains the UAP.

\textbf{(2) Augmentation via linear span.}

The second technique concerns the nested composition of two flow maps.  According to the general theory of operator splitting \cite{McLachlan2002Splitting}, the flow map of $f_1 + f_2$, $\phi_{f_1 + f_2}^\tau$, can be approximated by nested compositions of the flow maps of $f_1$ and $f_2$, namely $\phi_{f_1}^t$ and $\phi_{f_2}^t$ with some small $t$. This approximation can be extended to any finite sum of functions.  Therefore, to approximate $\phi_{\mathrm{ReLU}}^\tau$, it suffices to choose a finite set of functions $f_i$ whose sum can adequately approximate ReLU.

For example, using the well-known Fourier series theory \cite{Stein2011Fourier}, ReLU can be approximated on any large region by a finite sum of trigonometric functions. Combined with the scaling and translation relations from the first augmentation technique, it is straightforward to show that both $\mathcal{F}_{\mathrm{ass}}(\sin)$ and $\mathcal{F}_{\mathrm{ass}}(\cos)$ achieve the UAP,  where $\sin$ and $\cos$ act elementwise on the argument.

In addition, combining the first augmentation trick, we can find the hypothesis space $\mathcal{H}(\hat{\mathcal{F}}_{\mathrm{ass}}(f))$ of the following control family $\hat{\mathcal{F}}_{\mathrm{ass}}(f)$ has the same expressivity with $\mathcal{H}(\mathcal{F}_{\mathrm{ass}}(f))$: 
\begin{align}
    \label{eq:hat_F_ass_f}
\hat{\mathcal{F}}_{\mathrm{ass}}(f)
:=
\left\{
 x \to
\sum_{i=1}^{n} s_i\,g(a_i x + b_i)
\;\middle|\;
s_i\in \mathbb{R}, a_i\ge 0, b_i \in \mathbb{R}^d, n \in \mathbb{Z}^+, g \in \mathcal{F}_{\mathrm{ass}}(f)
\right\}.
\end{align}
Here the function $s_i g(a_i x + b_i)$ is a special case of $s A^{-1} f(Ax + b)$ with $A = a_i I$ and $s = s_i a_i$.

\textbf{(3) Augmentation by Lie brackets.}

When $\mathcal{F}$ contains smooth functions, we can also utilize Lie brackets to enhance the expressiveness of the hypothesis space. Define $\text{Lie}(\mathcal{F})$ as the span of all vector fields of $\mathcal{F}$ and their iterated Lie brackets (of any order):
\begin{align*}
 \text{Lie}(\mathcal{F})
    &=
 \text{span}\{
 f_1, [f_1,f_2],[f_1,[f_2,f_3]],... |
 f_1, f_2, f_3,... \in \mathcal{F}
        \} \notag
\end{align*}
where the operator $[f,g](x) = \nabla g(x) f(x) - \nabla f(x) g(x)$ is the Lie bracket between two smooth vector fields $f$ and $g$. One basic property is that the flow map $\phi^\tau_{[f,g]}$ of $[f,g]$ can be approximated by
$
\phi^{\sqrt{\tau}}_{-g} \circ
\phi^{\sqrt{\tau}}_{-f} \circ
\phi^{\sqrt{\tau}}_{g} \circ
\phi^{\sqrt{\tau}}_{f}
$
for $\tau$ small enough.
Therefore, when $\mathrm{Lie}(\mathcal{F})$ is easy to compute, to verify whether $\mathcal{F}$ achieves the UAP, it suffices to check whether $\mathrm{Lie}(\mathcal{F})$ can approximate ReLU and affine functions arbitrarily well.  

For example, in the case where $f$ consists of polynomial functions, one can calculate the following Lie brackets:
\begin{align}
    \label{eq:Lie_polynomial_mn}
 [x^n, x^m]=(m-n)x^{n+m-1}, n,m \in \mathbb{Z}^+,
\end{align}
where $x^n$ denotes the elementwise $n$-th power function applied to $x$.
It is easy to see that when $n \ge 3$, 
$\mathrm{Lie}(\{1, x^n\}) = \mathrm{span}\{1,x,x^2,\dots\}$ 
contains all elementwise polynomials. Therefore, by the Stone-Weierstrass theorem, the ReLU can be arbitrarily approximated, and we conclude that $\mathcal{F}_{\mathrm{ass}}(x^n), n \ge 3,$ achieves the UAP.

\subsection{Useful lemmas}

To make the arguments in the outlined strategy more rigorous, we now formally present the aforementioned augmentation techniques in the form of lemmas.

The first lemma shows that the flow maps of two dynamical systems are arbitrarily close if the corresponding vector fields are sufficiently close. 

\begin{lemma}[Lemma 4 of \cite{Duan2022Vanilla}]
    \label{lemma:ODE_error_estimation}
 Consider two ODE systems
    \begin{align}\label{eq:ODE_system}
 \dot{x}(t) = f_i(x(t)), \quad t\in(0,\tau), \quad i=1,2,
    \end{align}
 where the $f_i(x)$ are continuous in $x\in\mathbb{R}^d$. In addition, we assume that $f_1(x)$ is globally $L$-Lipschitz continuous, \emph{i.e.,} $\|f_1(x)-f_1(x')\| \le L \|x-x'\|$ for any $x,x' \in \mathbb{R}^d$. Then, for any compact domain $\Omega$ and $\varepsilon>0$, there exists $\delta \in (0,1]$ such that the flow maps $\phi_{f_1}^\tau(x) ,\phi_{f_2}^\tau(x)$ satisfy
    \begin{align}
        \|\phi_{f_1}^\tau(x)- \phi_{f_2}^\tau(x)\| \le \varepsilon,
 \quad \forall x\in\Omega,
    \end{align}
 provided that $\|f_1(x)-f_2(x)\|< \delta$ for all $x \in \Omega_\tau$, where $\Omega_\tau$ is a compact domain defined as $\Omega_\tau = \{ x + (V+1) \tau e^{L\tau} x' ~|~ x \in \Omega, \|x'\| \le 1, V = \max_{x\in \Omega}\{\|f_1(x)\|\} \}$.
\end{lemma}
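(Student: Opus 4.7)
The plan is to argue via a Gronwall-type estimate applied to the difference $x_1(t) - x_2(t)$, where $x_i(t) := \phi_{f_i}^t(x_0)$ for $x_0 \in \Omega$, combined with a bootstrap argument that keeps the $f_2$-trajectory inside the fattened tube $\Omega_\tau$, which is precisely the region where the hypothesis $\|f_1 - f_2\| < \delta$ is available. Once both pieces are in place, choosing $\delta$ small in terms of $\varepsilon$, $L$, and $\tau$ will give the desired uniform bound on $\Omega$.

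First I would control the $f_1$-trajectory alone. Writing $\|f_1(x_1(t))\| \le V + L\|x_1(t) - x_0\|$ by Lipschitz continuity around $x_0 \in \Omega$, Gronwall's inequality produces $\|x_1(t) - x_0\| \le V\tau e^{L\tau}$ on $[0,\tau]$, so $x_1(t)$ never leaves $\Omega_\tau$. Next, for the difference I would combine Lipschitz continuity of $f_1$ with the triangle inequality to get
\[
\frac{d}{dt}\|x_1(t) - x_2(t)\| \le L\|x_1(t) - x_2(t)\| + \|f_1(x_2(t)) - f_2(x_2(t))\|,
\]
and invoke the hypothesis to bound the last term by $\delta$ whenever $x_2(t) \in \Omega_\tau$. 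A second application of Gronwall then yields $\|x_1(t) - x_2(t)\| \le \delta\tau e^{L\tau}$ on $[0,\tau]$, and taking $\delta \le \min(1, \varepsilon/(\tau e^{L\tau}))$ closes the estimate at time $\tau$.

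The only nontrivial point is justifying that $x_2(t)$ really remains in $\Omega_\tau$ for the whole interval $[0,\tau]$, since the differential inequality for $\|x_1 - x_2\|$ tacitly assumes this. I would resolve this by a standard continuity argument: set $t^* := \sup\{s \in [0,\tau] : x_2(r) \in \Omega_\tau \; \forall r \in [0,s]\}$, run the two Gronwall estimates on $[0,t^*]$, and combine them into $\|x_2(t^*) - x_0\| \le (V+\delta)\tau e^{L\tau}$. For $\delta \le 1$ this bound is strictly smaller than the radius $(V+1)\tau e^{L\tau}$ defining $\Omega_\tau$, so $x_2(t^*)$ lies in the interior of $\Omega_\tau$, contradicting maximality unless $t^* = \tau$. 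This strict-gap margin between the Gronwall bound and the radius of $\Omega_\tau$ is the key technical ingredient that prevents the argument from being circular; everything else is routine bookkeeping together with the final choice of $\delta$.
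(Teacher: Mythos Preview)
Your argument is correct and is exactly the standard Gronwall-plus-bootstrap proof one expects for such a continuous-dependence lemma; the only cosmetic point is that the differential inequality for $\|x_1-x_2\|$ should really be written in integral form (the norm need not be differentiable), but you already signal that Gronwall is applied in that sense. Note that the paper does not actually supply its own proof of this statement: it is quoted as ``Lemma~4 of \cite{Duan2022Vanilla}'' and used as a black box, so there is nothing in the paper to compare your proposal against.
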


The second lemma ensures that the augmentation technique via the linear span of Lipschitz continuous control families. 

\begin{lemma}
    \label{lemma:flow_sum_f}
    \label{th:flow_sum_f}
 Let $\mathcal{F}=\{f_1,\cdots f_m\}$ be a finite control family, where $f_i:\mathbb{R}^d\to \mathbb{R}^d$ denotes locally Lipschitz functions; then, for any compact domain $\Omega \subset \mathbb{R}^d$, any function
    $f = \sum_{i=1}^m a_i f_i$ with $a_i \in \mathbb{R}^+$, and $\tau \ge 0$, the flow map $\phi_f^\tau$ (if well defined on $\Omega$) is in the closure of $\mathcal{H}(\mathcal{F})$; 
    \emph{i.e.}, $\phi_f^\tau \in {\operatorname{cl} (\mathcal{H}(\mathcal{F}))}$ under the topology of $C(\Omega)$.
\end{lemma}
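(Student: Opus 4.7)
The plan is to apply the classical Lie--Trotter operator splitting, which approximates the flow of a sum of vector fields by repeated compositions of the flows of the summands. The positivity assumption $a_i \ge 0$ is essential here, since $\mathcal{H}(\mathcal{F})$ only permits compositions $\phi_{f_i}^{\tau_i}$ with nonnegative times.

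First, I would use the time-rescaling identity $\phi_{a_i f_i}^{s} = \phi_{f_i}^{a_i s}$ to absorb the coefficients into the times, and define the single-step splitting map
\begin{align*}
\Psi_n := \phi_{f_m}^{a_m \tau / n} \circ \cdots \circ \phi_{f_1}^{a_1 \tau / n},
\end{align*}
which already lies in $\mathcal{H}(\mathcal{F})$ for every $n$. The goal then reduces to showing that the iterated composition $\Psi_n^{\circ n}$ converges uniformly on $\Omega$ to $\phi_f^\tau$ as $n \to \infty$.

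Second, I would establish a local consistency estimate. Because $\phi_f^s$ is well defined on $\Omega$ for $s \in [0,\tau]$, the tube $\tilde\Omega := \{\phi_f^s(x) : x \in \Omega,\ s \in [0,\tau]\}$ is compact; enlarging it to an open neighborhood on which all $f_i$ are globally Lipschitz (possible by local Lipschitz continuity) lets me write, uniformly for $y$ in that neighborhood,
\begin{align*}
\phi_{f_i}^{a_i \tau/n}(y) = y + (a_i \tau/n) f_i(y) + O((\tau/n)^2).
\end{align*}
Chaining these expansions across the $m$ factors of $\Psi_n$ gives $\Psi_n(y) = y + (\tau/n)\,f(y) + O((\tau/n)^2)$, which matches the short-time expansion of $\phi_f^{\tau/n}$. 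Hence the one-step error satisfies $\|\Psi_n(y) - \phi_f^{\tau/n}(y)\| \le C(\tau/n)^2$ on the enlarged tube.

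Finally, I would propagate this local estimate to a global one by a standard Gronwall/accumulation argument: the Lipschitz constant of $\phi_f^{\tau/n}$ on the tube is bounded by $e^{L\tau/n}$, so telescoping $\Psi_n^{\circ n} - \phi_f^\tau$ into a sum of one-step errors yields $\|\Psi_n^{\circ n}(x) - \phi_f^\tau(x)\| \le C'\tau^2/n \to 0$ uniformly in $x\in\Omega$. This is precisely the conclusion $\phi_f^\tau \in \operatorname{cl}(\mathcal{H}(\mathcal{F}))$ in the $C(\Omega)$ topology, and it can alternatively be packaged by applying Lemma \ref{lemma:ODE_error_estimation} to the piecewise-constant-in-time vector field whose flow is $\Psi_n^{\circ n}$, whose time-average over one period approaches $f$ as $n\to\infty$. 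The main obstacle will be the bookkeeping needed in the locally Lipschitz setting: I must verify inductively that the intermediate iterates $\Psi_n^{\circ k}(\Omega)$ remain inside the enlarged tube $\tilde\Omega$ for all $0 \le k \le n$ and all sufficiently large $n$, so that the global Lipschitz bound and Lemma \ref{lemma:ODE_error_estimation} can be applied at every step without trajectories escaping the region where the $f_i$ are controlled.
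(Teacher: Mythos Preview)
Your proposal is correct and follows essentially the same Lie--Trotter splitting strategy as the paper: both define the single-step map $\Psi_n = \phi_{f_m}^{a_m\tau/n}\circ\cdots\circ\phi_{f_1}^{a_1\tau/n}$ and argue that $\Psi_n^{\circ n}\to\phi_f^\tau$ uniformly on $\Omega$. The only difference is in packaging the locally Lipschitz case: the paper chooses a large ball $\mathcal{O}$ containing the relevant trajectories, modifies each $f_i$ outside $\mathcal{O}$ to make it globally Lipschitz, and then invokes the globally Lipschitz result (cited from \cite{Duan2025Minimal}) directly; you instead propose to keep the original $f_i$ and verify inductively that the iterates $\Psi_n^{\circ k}(\Omega)$ remain inside the enlarged tube. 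These are equivalent ways of resolving the same bookkeeping issue you correctly identified, and the paper's extension trick is just a slightly cleaner shortcut that avoids the explicit induction.
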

\begin{proof}

The proof of the globally Lipschitz case can be found in \cite{Duan2025Minimal} by the splitting method.  
Let  
$$
T_{n,j} = \big(\phi_{f_m}^{a_m\tau/n} \circ \cdots \circ \phi_{f_1}^{a_1\tau/n}\big)^{\circ j},
$$
where the superscript $\circ j$ denotes the $j$-fold composition. 
Then the flow map of a linear combination of functions in $\mathcal{F}$ can be approximated by compositions of the individual flow maps: for any $\varepsilon > 0$, there exists $n > 0$ such that
\begin{align}\label{eq:flow_sum_f_approximation}
 \left\| \phi_f^\tau(x) - T_{n,n}(x) \right\|_\infty < \varepsilon, \quad \forall\, x \in \Omega.
\end{align}

For the locally Lipschitz case, assumming $\phi_f^\tau$ is well-defined on $\Omega$, we can choose an integer $N$ large enough or $\tau/N$ small enough, and a larger open ball $\mathcal{O} \subset \mathbb{R}^d$ containing all vectors
$\phi_f^{{j\tau}/{n}}(\Omega)$ and $T_{n,j}(\Omega)$ for all relevant $j=1,...,n$ with $n <N$.
Let $L_{\mathcal{O}}$ be a shared Lipschitz constant for $f$ and all $f_i$ on $\mathcal{O}$, then modify the definition of $f_i$ outside $\mathcal{O}$ such that the modified functions is globally Lipschitz with constant $L_{\mathcal{O}}$. Then we can treat the approximation as the globally Lipschitz continuous case and finish the proof.
\end{proof}

The third lemma considers the approximation for flow maps of Lie brackets, which follows the Baker--Campbell--Hausdorff expansion (see \cite{Muger2019Notes,Yoshida1990Construction} for example).
\begin{proposition}
    \label{prop:one-order-Lie}
    \label{th:flow_approximation_Lie_f1_f2}
Let $f_1$ and $f_2$ be smooth and hence locally Lipschitz continuous, and assume the flow $\phi_{[f_1,f_2]}^t$ is well-defined on an open set $D\subset\mathbb R^d$ for all $t\in[0,\tau], \tau >0$.
Then for any compact domain $K\subset D$, there exist a positive integer $N$ and two positive constants $C_K,L_K>0$ (depending on $K,f_1,f_2,\tau$) such that the following approximation estimation holds for all $n > N$,
\begin{align}
 \bigl\|\phi_{[f_1,f_2]}^{\tau} - (\Psi_{\Delta t})^{\circ n}\bigr\|_{C(K)}
\le 
C_K \tau e^{L_K \tau} \sqrt{\Delta t}
= O(\sqrt{\Delta t}),
\end{align}
where $\Delta t=\tau/n$ and $\Psi_{\Delta t}$ is the following function composed by flow maps of $f_1,f_2$,
\begin{align}
    \Psi_{\Delta t} = \phi_{f_2}^{\sqrt \Delta t}\circ\phi_{f_1}^{\sqrt \Delta t}\circ\phi_{-f_2}^{\sqrt \Delta t}\circ\phi_{-f_1}^{\sqrt \Delta t}.
\end{align}

\end{proposition}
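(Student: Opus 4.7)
The plan is to follow the classical Baker--Campbell--Hausdorff (operator-splitting) argument, combined with a discrete Gronwall propagation of the per-step error. Since $\phi_{[f_1,f_2]}^{t}(K)$ depends continuously on $t\in[0,\tau]$, the set $K_0:=\bigcup_{t\in[0,\tau]}\phi_{[f_1,f_2]}^t(K)$ is a compact subset of $D$; I would fix a slightly larger compact neighborhood $K^\ast\subset D$ of $K_0$ and let $M$ and $L$ denote uniform bounds on $f_1,f_2$ together with their first and second derivatives, and a common Lipschitz constant of $f_1,f_2,[f_1,f_2]$ on $K^\ast$. The scheme is then: (i) show one step of $\Psi_{\Delta t}$ matches one step of $\phi_{[f_1,f_2]}^{\Delta t}$ up to $O(\Delta t^{3/2})$; (ii) propagate this through $n$ iterations to obtain $O(\sqrt{\Delta t})$.

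For the single-step estimate, expand each of the four flows in powers of $s=\sqrt{\Delta t}$ using $\phi_f^s(x)=x+sf(x)+\tfrac{s^2}{2}(\nabla f)f(x)+O(s^3)$ and compose them in the order prescribed by $\Psi_{\Delta t}$. A direct calculation shows that the $s$-terms cancel (since $f_1+f_2-f_1-f_2=0$) and the $s^2$-terms collapse exactly to $s^2[f_1,f_2](x)$ in the paper's sign convention $[f_1,f_2]=\nabla f_2\,f_1-\nabla f_1\,f_2$. This yields
\[
\Psi_{\Delta t}(x) = x + \Delta t\,[f_1,f_2](x) + R(x,\Delta t),\qquad \|R(x,\Delta t)\|\le C_1\,\Delta t^{3/2},
\]
uniformly on $K^\ast$, where $C_1$ depends only on $M$. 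Combining with the Taylor expansion $\phi_{[f_1,f_2]}^{\Delta t}(x)=x+\Delta t\,[f_1,f_2](x)+O(\Delta t^2)$, the per-step discrepancy is bounded by $C_2\Delta t^{3/2}$ for some $C_2=C_2(M)$.

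For error propagation, set $e_j:=\phi_{[f_1,f_2]}^{j\Delta t}(x)-\Psi_{\Delta t}^{\circ j}(x)$. Using that $\phi_{[f_1,f_2]}^{\Delta t}$ is $e^{L\Delta t}$-Lipschitz on $K^\ast$, we obtain the recursion
\[
\|e_j\|\le e^{L\Delta t}\|e_{j-1}\|+C_2\Delta t^{3/2},\qquad e_0=0,
\]
and summing the geometric series yields $\|e_n\|\le C_2\tau e^{L\tau}\sqrt{\Delta t}$, which is exactly the desired bound with $C_K=C_2$ and $L_K=L$.

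The main obstacle, and the reason for the assumption $n>N$, is keeping all intermediate iterates $\Psi_{\Delta t}^{\circ j}(x)$ inside the fixed compact set $K^\ast$ on which the Taylor and Lipschitz bounds are valid. I would close this with a bootstrap: if the Gronwall bound holds up to step $j-1$, then $\Psi_{\Delta t}^{\circ(j-1)}(x)$ lies within $C_2\tau e^{L\tau}\sqrt{\Delta t}$ of the exact orbit $\phi_{[f_1,f_2]}^{(j-1)\Delta t}(x)\in K_0$, so choosing $N$ large enough that $C_2\tau e^{L\tau}\sqrt{\tau/N}$ is smaller than the $\delta$-margin between $K_0$ and $\partial K^\ast$ keeps the iterate inside $K^\ast$, closing the induction. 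An alternative, mirroring the truncation trick used in Lemma~\ref{lemma:flow_sum_f}, is to first replace $f_1,f_2$ outside $K^\ast$ by globally Lipschitz extensions so that no flow can escape, and then apply the argument on the extended fields.
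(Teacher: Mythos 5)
Your proposal is correct and follows essentially the same route as the paper's proof: a single-step Baker--Campbell--Hausdorff expansion giving a local error of order $\Delta t^{3/2}$ (your sign bookkeeping for $\Psi_{\Delta t}$ indeed yields $x+\Delta t\,[f_1,f_2](x)+O(\Delta t^{3/2})$ with the convention $[f_1,f_2]=\nabla f_2\,f_1-\nabla f_1\,f_2$), followed by the discrete Gr\"onwall recursion $E_j\le e^{L\Delta t}E_{j-1}+C\Delta t^{3/2}$ and a choice of $N$ that confines all iterates --- including the intermediate sub-steps of each $\Psi_{\Delta t}$, which move points by $O(\sqrt{\Delta t})$ --- to a fixed compact neighborhood where the Taylor and Lipschitz bounds hold. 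The paper implements the same confinement via nested balls $B_r\subset B_{r+1}\subset B_{r+2}$ and thresholds $\tau_0,\tau_1,\tau_2$, which is just a concrete version of your bootstrap.
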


\begin{proof}

Denote $g = [f_1,f_2]$, then the set $\left\{ \phi_g^{\,t}(x) \;\middle|\; x \in K,\; t \in [0,\tau] \right\}$ is compact and covered by a ball $B_r$ with radius $r>0$.
For any $x \in B_{r+2}$ and $\Delta t$ small enough, the Taylor expansion for $\phi_g^{\Delta t}(x)$ and $\Phi_{\Delta t}(x)$ implies the difference 
$
R(x;\Delta t):=
\phi_{g}^{\Delta t}(x) - \Psi_{\Delta t}(x) = O(\Delta t^{3/2}).
$
Therefore there exists a $\tau_0 >0$ such that $\|R(x;\Delta t)\| < 1$ for any $x \in B_{r+1}$ and $\Delta t < \tau_0$, which implies that $\Psi_{\Delta t}(x) \in B_{r+2}$.
Similarly, there exists a $\tau_1 >0$ such that all intermediate trajectory involved in the evaluation of $\Psi_{\Delta t}(x),x \in B_{r+1},$ is covered by $B_{r+2}$ provided $\Delta t < \tau_1$.
Finally, we can take $\Delta t < \min(\tau_0,\tau_1,\tau_2)$ such that all the involved trajectories started from $x \in K$ are covered by $B_{r+2}$, where $\tau_2>0$ is small enough and its existence will be clear later.

Since $f_1$, $f_2$, and $g$ are smooth, all their derivatives are locally Lipschitz continuous. Denote $L_K >0$ as the Lipschitz constant shared by $f_1$, $f_2$ and $g$ on $B_{r+2}$, and $C_K>0$ as a constant depending on the derivatives of $f_1$ and $f_2$ on $B_{r+2}$ such that
\begin{align}
    \label{eq:error_BCH}
    \|R(x;\Delta t)\|
 =
    \|\phi_{g}^{\Delta t}(x) - \Psi_{\Delta t}(x)\| < C_K \Delta t^{3/2}, 
 \quad \forall x \in B_{r+1}.
\end{align}
Define the approximation error on $K$ after $j$ steps as
\begin{align}
 E_j := \sup_{x\in K}\bigl\|\phi_{g}^{j \Delta t}(x) - \Psi_{\Delta t}^{\circ j}(x)\bigr\|,
 \quad
 j = 0,1,...,n,
\end{align}
then we have $E_0=0$ and the following estimation, according to (\ref{eq:error_BCH}) and the local Lipschitz continuity of $\phi_{g}^{\Delta t}$ 
\begin{align}
 E_{j} 
    &\le 
 \sup_{x\in K}
 \left(
 \bigl\|\phi_{g}^{j\Delta t}(x) - \phi_{g}^{\Delta t} \circ \Psi_{\Delta t}^{\circ {(j-1)}}(x)\bigr\| 
 + \bigl\|\phi_{g}^{\Delta t} \circ \Psi_{\Delta t}^{\circ {(j-1)}}(x) - \Psi_{\Delta t}^{\circ {j}}(x) \bigr\| 
 \right)\\
    & \le 
 e^{L_K \Delta t}\,E_{j-1} + C_K\,\Delta t^{3/2}.
\end{align}
Unrolling the recursion or employing the discrete Grönwall inequality  gives
\begin{align}
 E_j 
    &\le 
 (e^{L_Kh})^j E_0 + \sum\limits_{i=0}^{j-1} (e^{L_K \Delta t})^{j-1-i} C_K \Delta t^{3/2}
 = C_K \Delta t^{3/2}\frac{(e^{L_K \Delta t})^j - 1}{e^{L_K \Delta t}-1}\\
    &\le 
 \frac{C_K}{L_K} (e^{L_Kj \Delta t}-1) \sqrt{\Delta t}
 \le 
 \frac{C_K}{L_K} (e^{L_K\tau}-1) \sqrt{\Delta t} 
 \le  
 C_K \tau e^{L_K \tau} \sqrt{\Delta t}. 
    \label{eq:main_estimation_inequality}
\end{align}
Here the inequality $j \Delta t \le n \Delta t = \tau$ and $s \le e^{s} - 1 \le s e^{s}$, $s \ge 0$, are employed.

Now we can assign the prescribed $\tau_2$ as $\tau_2 = 1/(C_K \tau e^{L_K \tau})^2$ which ensures $E_j < 1$ for all $j=1,...,n$.
Taking $N = \lceil \tau / \min(\tau_0,\tau_1,\tau_2) \rceil $ and $n>N$, we have $\Delta t = \tau/n < \min(\tau_0,\tau_1,\tau_2)$.
In addition, for any $x \in K$, we have the trajectory 
$\phi_g^t(x) \in B_r$ for all $t \in [0,\tau]$, 
$\Psi_{\Delta t}^{\circ j}(x) \in B_{r+1}$ for all $j \in \{0,1,...,n\}$,
and the intermediate trajectory during $\Psi_{\Delta t}^{\circ j}(x)$ belongs to $B_{r+2}$. This ensures the whole estimation procedure and finishes the proof.
\end{proof}

Combining Lemma~\ref{lemma:flow_sum_f} and Lemma~\ref{th:flow_approximation_Lie_f1_f2}, we can derive the following lemma, which ensures the augmentation technique via Lie brackets.

\begin{lemma}
    \label{lemma:lie_bracket_error}
    \label{th:flow_Lie}
 Let $h:\mathbb{R}^d\to \mathbb{R}^d$ be globally Lipschitz continuous and $\mathcal{F}$ be a symmetric smooth control family.
 If for any compact domain $\mathcal{K}\subset \mathbb{R}^d$ and $\delta>0$, there exists $g \in \operatorname{Lie}(\mathcal{F})$ such that 
    $$
    \|h - g\|_{C(\mathcal{K})} < \delta,
    $$
 then all flow maps $\phi_h^\tau, \tau \ge 0$, are in the closure of $\mathcal{H}(\mathcal{F})$ under the topology of $C(\Omega)$ for any compact domain $\Omega\subset \mathbb{R}^d$; In other words, for any compact domain $\Omega$, $\tau\ge 0$ and $\varepsilon>0$, there is a function $\varphi \in \mathcal{H}(\mathcal{F})$ such that 
    $\|\phi_h^\tau - \varphi\|_{C(\Omega)} < \varepsilon$.
\end{lemma}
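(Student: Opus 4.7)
The plan is to chain three approximation steps, each using one of the tools already prepared in this subsection, and to reduce the outer approximation of $\phi_h^\tau$ to successively simpler inner approximations of flow maps of iterated Lie brackets.

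\emph{Step 1 (vector-field level reduction).} Fix a compact domain $\Omega$, a horizon $\tau \ge 0$, and $\varepsilon > 0$. Since $h$ is globally $L$-Lipschitz, Lemma~\ref{lemma:ODE_error_estimation} produces a $\delta > 0$ and an enlarged compact set $\Omega_\tau$ such that any continuous vector field $g$ with $\|h - g\|_{C(\Omega_\tau)} < \delta$ satisfies $\|\phi_h^\tau - \phi_g^\tau\|_{C(\Omega)} < \varepsilon/2$. The hypothesis of the lemma then supplies such a $g \in \mathrm{Lie}(\mathcal{F})$, reducing the problem to approximating $\phi_g^\tau$ by elements of $\mathcal{H}(\mathcal{F})$.

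\emph{Step 2 (splitting the linear combination).} By definition of $\mathrm{Lie}(\mathcal{F})$, we may write $g = \sum_{i=1}^m c_i g_i$ where each $g_i$ is an iterated Lie bracket of elements of $\mathcal{F}$. Using the anti-symmetry $-[B_1, B_2] = [B_2, B_1]$ together with the symmetry $\mathcal{F} = -\mathcal{F}$, the sign of each $c_i$ can be absorbed into the bracket $g_i$ itself, yielding $g = \sum_{i=1}^m a_i \tilde g_i$ with $a_i \ge 0$ and each $\tilde g_i$ again an iterated bracket of elements of $\mathcal{F}$. Applying Lemma~\ref{lemma:flow_sum_f} to the finite family $\{\tilde g_1, \ldots, \tilde g_m\}$ expresses $\phi_g^\tau$ as a $C(\Omega)$-limit of compositions of flows $\phi_{\tilde g_i}^{s}$ with $s \ge 0$. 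It therefore suffices to show that for every iterated bracket $\tilde g$ of elements of $\mathcal{F}$, every compact $K$, and every $s \ge 0$ for which the flow is defined on $K$, the map $\phi_{\tilde g}^s$ lies in the $C(K)$-closure of $\mathcal{H}(\mathcal{F})$.

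\emph{Step 3 (induction on bracket depth).} I would prove this final claim by induction on the depth of $\tilde g$. The base case ($\tilde g \in \mathcal{F}$) is immediate, since $\phi_{\tilde g}^s \in \mathcal{H}(\mathcal{F})$, and symmetry of $\mathcal{F}$ covers the opposite sign. For the inductive step, write $\tilde g = [B_1, B_2]$ with $B_1, B_2$ of strictly smaller depth; by the inductive hypothesis the flow maps $\phi_{\pm B_1}^t$ and $\phi_{\pm B_2}^t$ are in the closure of $\mathcal{H}(\mathcal{F})$ on every compact set (the negative-time direction being supplied by the fact that $-B_i$ is an iterated bracket of the same depth). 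Proposition~\ref{prop:one-order-Lie} then approximates $\phi_{[B_1,B_2]}^s$ uniformly by $n$-fold compositions of $\Psi_{s/n} = \phi_{B_2}^{\sqrt{s/n}} \circ \phi_{B_1}^{\sqrt{s/n}} \circ \phi_{-B_2}^{\sqrt{s/n}} \circ \phi_{-B_1}^{\sqrt{s/n}}$, and Proposition~\ref{prop:composition_approximation} converts the uniform approximations of the four factors into a uniform approximation of $\Psi_{s/n}$, and hence of $\phi_{\tilde g}^s$, by an element of $\mathcal{H}(\mathcal{F})$.

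\emph{Main obstacle.} The principal difficulty is the bookkeeping of the nested compact domains through the three reductions and the bracket-depth induction: each application of Lemma~\ref{lemma:ODE_error_estimation}, Lemma~\ref{lemma:flow_sum_f}, or Proposition~\ref{prop:one-order-Lie} requires the underlying compact set to be enlarged so as to contain all intermediate trajectories, and this enlargement depends on data (Lipschitz constants, horizon, number of splitting steps, bracket depth) that are only fixed further inside the argument. Since $\mathcal{F}$ consists of smooth fields on $\mathbb{R}^d$ and all enlargements are finite at each stage, the bookkeeping closes, but a careful outside-in choice of quantifiers and error budgets is needed so that the total $C(\Omega)$-error stays below $\varepsilon$.
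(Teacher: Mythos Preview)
Your proposal is correct and follows essentially the same three-stage route as the paper's proof: first use Lemma~\ref{lemma:ODE_error_estimation} to pass from $h$ to a $g\in\mathrm{Lie}(\mathcal{F})$, then use Lemma~\ref{lemma:flow_sum_f} to break $\phi_g^\tau$ into compositions of flows of iterated brackets, and finally use Proposition~\ref{prop:one-order-Lie} recursively together with Proposition~\ref{prop:composition_approximation} to reach $\mathcal{H}(\mathcal{F})$. Your explicit handling of the sign absorption (via bracket anti-symmetry and the symmetry of $\mathcal{F}$) before invoking Lemma~\ref{lemma:flow_sum_f}, and your explicit induction on bracket depth, are slightly more careful than the paper's presentation, which leaves these points implicit.
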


\begin{proof}
 Denote by $\mathcal{L}$ the set of all iterated Lie brackets generated by $\mathcal{F}$:
    \begin{align}
 \mathcal{L} := \left\{ [f_{1}, [f_{2}, \cdots [f_{i-1}, f_{i}]\cdots]] 
 ~|~  
 f_{j} \in \mathcal{F}, j=1,...,i, i \in \mathbb{Z}^+ \right\}.
    \end{align}
 Then $\mathcal{L}$ is also a smooth family and we have $ \operatorname{Lie}(\mathcal{F}) = \mathrm{span}(\mathcal{L})$. 
 The Lemma \ref{lemma:ODE_error_estimation} indicates that we can choose a function $g \in \operatorname{Lie}(\mathcal{F})$ 
 approximates $h$ such that the flow maps satisfy
    \begin{equation}
        \label{eq:approximation_01}
        \|\phi_h^\tau - \phi_{g}^\tau\|_{C(\Omega)} < \frac{\varepsilon}{3},
 \quad 
 g = \sum_{i=1}^n a_i P_i, ~
 P_i \in \mathcal{L}, ~ a_i\in\mathbb{R}, ~n \in \mathbb{Z}^+.
    \end{equation}
 The Lemma \ref{lemma:flow_sum_f} implies that $\phi_{g}^\tau$ can be approximated by $\mathcal{H}(\mathcal{L})$. 
 In other words, there is a large integer $m$, and $m$ positive numbers $\tau_1, ...,\tau_m >0$ and $m$ functions $p_1,...,p_m \in \mathcal{L}$ such that the following approximation holds,
     \begin{equation}
        \label{eq:approximation_02}
 \Bigl\| \phi_{g}^\tau - \phi_{p_1}^{\tau_1} \circ \cdots \circ \phi_{p_m}^{\tau_m}  \Bigr\|_{C(\Omega)} 
 < \frac{\varepsilon}{3}.
    \end{equation}

 By recursively applying Proposition~\ref{th:flow_approximation_Lie_f1_f2} and making use of Proposition~\ref{prop:composition_approximation}, we conclude that each well-defined flow map $\phi_p^t, p \in \mathcal{L}, t>0,$ can be approximated by elements of $\mathcal{H}(\mathcal{F})$. As a consequence, we can approximate $\phi_{p_1}^{\tau_1} \circ \cdots \circ \phi_{p_m}^{\tau_m}$ by an element $\varphi$ in $\mathcal{H}(\mathcal{F})$ such that $\|\phi_{p_1}^{\tau_1} \circ \cdots \circ \phi_{p_m}^{\tau_m} - \varphi \|_{C(\Omega)} < \frac{\varepsilon}{3}$. Combining the estimation in (\ref{eq:approximation_01}) and (\ref{eq:approximation_02}), we obtain the desired approximation.
\end{proof}

\subsection{Proof of Theorem \ref{th:UAP_of_F_ass}}

\echo{
\begin{theorem}[UAP of associated affine control families, Theorem \ref{th:UAP_of_F_ass}]
 The hypothesis space $\mathcal{H}(\mathcal{F}_{\text{ass}}(f))$ possesses $C$-UAP for $\operatorname{Diff}_0(\mathbb{R}^d)$ on any compact set $\Omega \subset \mathbb{R}^d$ provided $f\in C(\mathbb{R}^d,\mathbb{R}^d)$ satisfies one of the following conditions: 
    \begin{enumerate}
        \item[1)] $f$ is coordinate-separable, globally Lipschitz continuous and nonlinear;
        \item[2)] $f$ is coordinate-separable, locally Lipschitz continuous, nonlinear, and in the case $d = 1$, $f$ is not a quadratic function of the form $ax^2 + bx + c$;
        \item[3)] 
        $f$ is continuously differentiable and $L^1$ integrable, 
        $\|x\|^d \|\nabla f(x)\|$ is bounded, and $\int_{\mathbb{R}^d}f(x)dx \neq 0$.
    \end{enumerate}
\end{theorem}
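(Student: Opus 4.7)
The plan is to reduce the UAP claim in all three cases to the fact, established in \cite{Duan2025Minimal}, that $\mathcal{F}_{\mathrm{ass}}(\mathrm{ReLU})$ already achieves UAP. By Proposition~\ref{prop:composition_approximation} it then suffices to show that every elementary flow map $\phi_{\mathrm{ReLU}}^{\tau}$ can be approximated in $C(\Omega)$ by elements of $\mathcal{H}(\mathcal{F}_{\mathrm{ass}}(f))$; via Lemma~\ref{lemma:flow_sum_f} and the expanded family $\hat{\mathcal{F}}_{\mathrm{ass}}(f)$ defined in (\ref{eq:hat_F_ass_f}), this reduces to showing that $\mathrm{ReLU}$, viewed as a vector field, lies in the uniform closure on compact sets of $\operatorname{span}\{g(ax+b):g\in\mathcal{F}_{\mathrm{ass}}(f),\;a\ge 0,\;b\in\mathbb{R}^d\}$, possibly after first conjugating $f$ by an invertible matrix of positive determinant through the identity $\phi^t_{A^{-1}f(A\cdot+b)}=(A^{-1}\cdot-A^{-1}b)\circ\phi_f^t\circ(A\cdot+b)$.

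For Case~1, coordinate separability isolates a scalar nonlinear component $f_{i_0}\in C(\mathbb{R})$; global Lipschitz continuity bars any polynomial of degree $\ge 2$, while nonlinearity rules out the affine case, so $f_{i_0}$ must be non-polynomial. The classical Leshno--Pinkus theorem then guarantees that $\operatorname{span}\{f_{i_0}(at+b):a,b\in\mathbb{R}\}$ is uniformly dense in $C(I)$ for every compact $I\subset\mathbb{R}$, giving scalar approximation of $\mathrm{ReLU}$. To lift from scalar to vector, for each output coordinate $j$ I pick a permutation-like matrix $A$ with $\det A>0$, as in (\ref{eq:example_A_2d}), so that $A^{-1}f(Ax+b)$ routes the nonlinearity carried by $f_{i_0}$ into the $j$-th coordinate, while the remaining (at worst affine) contributions from the other $f_i$ are cancelled using the affine members of $\mathcal{F}_{\mathrm{ass}}(f)$. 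Summing over $j=1,\ldots,d$ then yields a vector field approximating $\mathrm{ReLU}$ componentwise.

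Case~2 splits according to whether the nonlinear component $f_{i_0}$ is polynomial. If $f_{i_0}$ is non-polynomial, the Leshno--Pinkus argument of Case~1 applies verbatim, with Lemma~\ref{lemma:flow_sum_f} invoked in its locally Lipschitz form by restricting to $\Omega$ and to time horizons short enough to avoid blow-up. If $f_{i_0}$ is polynomial of degree $n\ge 2$, I switch to Lie-bracket augmentation through Lemma~\ref{lemma:lie_bracket_error}: identity (\ref{eq:Lie_polynomial_mn}), together with mixed-coordinate analogues available when $d\ge 2$, shows that $\operatorname{Lie}(\mathcal{F}_{\mathrm{ass}}(f))$ contains every polynomial, so Stone--Weierstrass yields density in $C(\mathcal{K})$ and hence approximation of $\mathrm{ReLU}$. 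The lone excluded case $d=1,\;n=2$ is precisely where $\operatorname{Lie}(\mathcal{F}_{\mathrm{ass}}(f))$ collapses into $\operatorname{span}\{1,x,x^2\}$ and every flow map is a M\"obius transformation, so the obstruction is intrinsic rather than a defect of the argument.

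Case~3 drops coordinate separability, so neither Leshno--Pinkus nor elementary Lie-bracket identities are available; instead I would exploit the integrability and decay hypotheses to view $\sum_k c_k f(a_k x + b_k)$ as a Riemann-sum quadrature for a convolution $f\ast\mu$ against a signed measure $\mu$. Since $\int_{\mathbb{R}^d}f\neq 0$, the rescalings $\varepsilon^{-d}f(\cdot/\varepsilon)$ form an approximate identity, and the bound on $\|x\|^d\|\nabla f(x)\|$ supplies the tail control needed to pass from the idealised convolution to a finite quadrature with uniform error on $\Omega$. Once any continuous vector field (in particular each coordinate of $\mathrm{ReLU}$) is approximable in this way, the strategy of Case~1 closes the argument. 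I expect this case to be the main obstacle: upgrading the quadrature from $L^p$ to uniform convergence demands delicate modulus-of-continuity and tail estimates, and these must be made compatible with the restriction $a_k\ge 0$ imposed by $\hat{\mathcal{F}}_{\mathrm{ass}}(f)$, whereas Cases~1 and~2 are largely bookkeeping once the correct reduction via matrix conjugation is set up.
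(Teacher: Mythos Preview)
Your overall reduction to $\mathcal{F}_{\mathrm{ass}}(\mathrm{ReLU})$ via Proposition~\ref{prop:composition_approximation} and Lemma~\ref{lemma:flow_sum_f} matches the paper, and the Lie-bracket treatment of the polynomial sub-case in Case~2 is essentially the paper's route (though for $n=2,\ d\ge 2$ the paper has to do a concrete computation, showing that Cuchiero et al.'s fields $V_4,V_5$ lie in $\operatorname{Lie}(\mathcal{F}_{\mathrm{ass}}(x^2))$; ``mixed-coordinate analogues of (\ref{eq:Lie_polynomial_mn})'' is not enough detail to see that this goes through).

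There is a genuine gap in your Case~1 (and hence the non-polynomial branch of Case~2). The assertion that after conjugating by a permutation ``the remaining (at worst affine) contributions from the other $f_i$ are cancelled'' is unjustified: nothing in the hypotheses forces the other components of a coordinate-separable $f$ to be affine, so the off-target coordinates of $A^{-1}f(Ax+b)$ may carry genuinely nonlinear $f_k$'s that no affine correction can remove, and since the scalar weights $s_k,a_k$ in $\hat{\mathcal F}_{\mathrm{ass}}(f)$ are shared across all $d$ output coordinates you cannot tune them independently. The paper fixes this by first passing to the difference $\tilde f(x):=f(x+ce_1)-f(x)=(\sigma(x_1),0,\dots,0)\in\hat{\mathcal F}_{\mathrm{ass}}(f)$, which zeroes out all but one coordinate \emph{before} any broadcasting; Proposition~\ref{th:property_of_nonlinearity} is needed to choose $c$ so that $\sigma$ remains non-polynomial. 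Only after this isolation step does the permutation trick of (\ref{eq:example_A_2d}) cleanly place $\sigma$-based approximants of $\mathrm{ReLU}$ into each coordinate.

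For Case~3 your approximate-identity/quadrature idea is a genuinely different route from the paper's. Rather than convolving against rescalings of $f$, the paper marginalises: it sets $\bar f(x):=\int_{\mathbb R^{d-1}}f(x_1,y)\,dy$, which depends only on $x_1$, is globally Lipschitz by the decay bound on $\|x\|^d\|\nabla f\|$, and has first component with nonzero integral (hence nonlinear). This reduces Case~3 to Case~1 for $g(x)=(\bar f_1(x_1),0,\dots,0)$; the link back to $f$ is made by (a) the anisotropic scaling $A_\varepsilon^{-1}\bar f(A_\varepsilon\,\cdot)\to g$ with $A_\varepsilon=\operatorname{diag}(1,\varepsilon^{-1},\dots,\varepsilon^{-1})$, and (b) a Riemann-sum approximation of the integral defining $\bar f$ by finite shifts of $f$. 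Your approach would instead yield vector fields of the form $g(x)\,v$ with $v=\int f$, and then face the same single-coordinate isolation problem flagged above; the paper's anisotropic scaling in step~(a) is exactly the device that sidesteps it.
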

}

\subsubsection{Coordinate-separable cases}

Although the coordinate-separable setting is more general than the elementwise one, we can reduce it to the elementwise case for our analysis. To justify this reduction, we first state the following proposition, which shows a basic property of nonlinear functions.

\begin{proposition}
\label{th:property_of_nonlinearity}
Let $f:\mathbb{R}\to\mathbb{R}$ be continuous and non-polynomial. 
Then there exists a constant $c \in \mathbb{R}$ such that 
$
g_c(x):=f(x+c)-f(x)
$
is non-polynomial as well.
\end{proposition}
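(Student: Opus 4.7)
The plan is to prove the contrapositive: assuming that $g_c(x) := f(x+c) - f(x)$ is a polynomial for every $c \in \mathbb{R}$, I will show that $f$ itself must be a polynomial. Throughout, I work in $C(\mathbb{R})$ with the topology of uniform convergence on compact sets.

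The first step is a boundedness-of-degree argument via Baire category. For each $n \in \mathbb{N}$, let $A_n = \{c \in \mathbb{R} : g_c \text{ is a polynomial of degree } \le n\}$. Since the polynomials of degree $\le n$ form a finite-dimensional, hence closed, subspace of $C(\mathbb{R})$, and since $c \mapsto g_c$ is continuous (by uniform continuity of $f$ on compact sets), each $A_n$ is closed. By hypothesis $\mathbb{R} = \bigcup_n A_n$, so by the Baire category theorem some $A_N$ contains an open interval $(a,b)$.

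Next, I will upgrade this to $A_N = \mathbb{R}$ by showing $A_N$ is an additive subgroup. Using the identity
\begin{align}
g_{c_1+c_2}(x) = g_{c_1}(x) + g_{c_2}(x+c_1), \qquad g_{-c}(x) = -g_c(x-c),
\end{align}
one sees directly that if $c_1, c_2 \in A_N$ then $c_1 \pm c_2 \in A_N$, since shifting the argument of a polynomial of degree $\le N$ preserves the degree. An additive subgroup of $\mathbb{R}$ containing an open interval equals $\mathbb{R}$, so $g_c$ is a polynomial of degree $\le N$ for every $c$.

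With bounded degree in hand, I will solve the difference equation and isolate a periodic remainder. Choose a polynomial $p$ of degree $\le N+1$ with $p(x+1) - p(x) = g_1(x)$ (such $p$ exists by standard finite-difference calculus on the basis $\binom{x}{k}$). Let $h := f - p$. Then $h(x+1) = h(x)$, so $h$ is $1$-periodic and continuous. Moreover $g_c^{(h)}(x) := h(x+c) - h(x) = g_c(x) - (p(x+c) - p(x))$ is a polynomial in $x$ for every $c$, and by periodicity of $h$ it satisfies $g_c^{(h)}(x+1) = g_c^{(h)}(x)$; a $1$-periodic polynomial in $x$ must be constant in $x$. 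Hence $h(x+c) - h(x) = k(c)$ for some function $k$, and $k$ satisfies Cauchy's equation $k(c_1+c_2) = k(c_1) + k(c_2)$ and is continuous, so $k(c) = \alpha c$. This forces $h(x) = \alpha x + h(0)$, and therefore $f = p + h$ is a polynomial, contradicting the assumption.

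The main obstacle in this plan is the first step: a priori the degrees of the polynomials $g_c$ could be unbounded in $c$, so one must exclude this pathology before any algebraic manipulation becomes useful. The Baire-category-plus-subgroup argument is exactly what closes this gap, and once the uniform degree bound is secured the remaining deductions are routine.
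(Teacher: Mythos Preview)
Your proof is correct, and at its heart it is the same argument as the paper's: decompose $f$ as a polynomial plus a periodic function, then show the periodic part must be trivial. The paper does this tersely---for each $c$ it writes $f = Q_c + g_c$ with $g_c$ continuous and $c$-periodic, and then observes that since this holds for \emph{every} $c$, the periodic piece must be constant (implicitly: two incommensurable periods force constancy). Your steps 3--4 carry out the same idea with more care, performing the decomposition only for $c=1$ and then using the Cauchy functional equation to pin down $h$; this is arguably cleaner than the paper's one-line finish.

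However, your steps 1--2 (the Baire-category and subgroup argument to bound $\deg g_c$ uniformly in $c$) are entirely unnecessary: steps 3--4 never actually use the bound $N$. Once $h = f - p$ is $1$-periodic, the difference $h(x+c) - h(x)$ is a polynomial of \emph{some} degree and is $1$-periodic in $x$, hence constant in $x$, regardless of any uniform degree control. What you call the ``main obstacle'' is therefore no obstacle at all, and the proof stands---and matches the paper's---once the Baire detour is removed.
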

\begin{proof}
We complete the proof by contradiction. Specifically, we aim to show that if $f$ is continuous and for every $c \in \mathbb{R}$ the function $f(x+c)-f(x)$ is a polynomial, then $f(x)$ itself must also be a polynomial. Indeed, let $f(x+c)-f(x)=P(x)$ be a polynomial. By a basic property of polynomials, there exists a polynomial $Q(x)$ such that $Q(x+c)-Q(x)=P(x)$. Define $g = f-Q$. Then $g$ is a bounded periodic function with period $c$. Hence $f = g+Q$ is the sum of a polynomial and a $c$-periodic function. Since $c$ is arbitrary, $g$ must necessarily be a constant function, and therefore $f$ is a polynomial.
\end{proof}

Next, we provide the proof for the coordinate-separable cases.  

\begin{proof}[Proof of Theorem \ref{th:UAP_of_F_ass} for condition 1) and 2)]

Since condition~1) is a special case of condition~2),  it suffices to provide a proof for condition~2) only.

We will present the proof by considering three cases. The first case is when at least one component of $f=(f_1,\dots,f_d)$ is nonlinear and non-polynomial.  The second case is when all components $f_i$ of $f$ are polynomials and at least one component has a degree larger than 2. The third case is when all components $f_i$ of $f$ are quadratic polynomials and the dimension $d \neq 1$.

(1) For the first case, without loss of generality, we assume that the first component $f_1$ is nonlinear and is non-polynomial. 
In this case, there must exist a constant $c \in \mathbb{R}$ such that  
\(
\sigma(t) := f_1(t+c) - f_1(t)
\)
is non-polynomial and locally Lipschitz continuous as a function of $t \in \mathbb{R}$ according to Proposition~\ref{th:property_of_nonlinearity}. Let $b = c e_1 \in \mathbb{R}^d$, we have the following function $\tilde f(x)$ belongs to $\hat{\mathcal{F}}_{\mathrm{ass}}(f)$ defined in (\ref{eq:hat_F_ass_f}),
\begin{align}
 \tilde f(x) = f(x+b) - f(x) = (\sigma(x_1),0,...,0).
\end{align}
Then, using the augmentation trick similar to the example in (\ref{eq:example_A_2d}), we can broadcast the function $\sigma$ to each coordinate. Therefore, the control family needs to be examined, which becomes $\mathcal{F}_{\mathrm{ass}}(\sigma)$,  where $\sigma$ acts elementwise on each coordinate, and we only need to prove that $\mathcal{F}_{\mathrm{ass}}(\sigma)$ achieves the UAP.

Since $\sigma$ is a continuous non-polynomial function, by the universal approximation theorem for single-hidden-layer neural networks \cite{Leshno1993Multilayer}, we have for any compact interval $I$, the neural network of the form $h$,
\begin{align}
 h(t) = \sum_{i=1}^{N} s_i \sigma(a_i t +b_i), ~ a_i,b_i,s_i,t \in \mathbb{R}
\end{align}
can approximate the scalar ReLU arbitrarily well. 
The parameters $a_i$ can be restricted to be nonnegative.
Now, consider applying $h$ elementwise to $x \in \mathbb{R}^d$. It follows that ReLU can be approximated arbitrarily well by $\hat{\mathcal{F}}_{\mathrm{ass}}(\sigma)$. Since all the functions constructed here are locally Lipschitz continuous, Lemma~\ref{th:flow_sum_f} can be applied to show the flow map $\phi_{\mathrm{ReLU}}^\tau, \tau \in \mathbb{R},$ can be approximated by the hypothesis space $\mathcal{H}(\hat{\mathcal{F}}_{\mathrm{ass}}(\sigma))$. 
Consequently, since $\mathcal{F}_{\mathrm{ass}}(\mathrm{ReLU})$ achieves the UAP, both $\hat{\mathcal{F}}_{\mathrm{ass}}(\sigma)$ and $\mathcal{F}_{\mathrm{ass}}(\sigma)$ also achieve the UAP.

(2) For the second case, without loss of generality, we assume that $f_1$ is the one with the largest degree among the components of $f$. In this case, we can assume $f_1$ as the form $f_1(t) = t^n + ... + c_1 t + c_0, c_i \in \mathbb{R},$ where $n$ is the degree greater or equals to 2. Here we can further assume $c_0=c_1=0$ by adding a proper affine function to $f$.

When $n\ge 3$, we consider the Lie algebra generated by $\{f, e_1\}$, where $e_1$ is the first standard basis vector in $\mathbb{R}^d$. Direct calculation according to (\ref{eq:Lie_polynomial_mn}) gives that
\begin{align}
 \text{span}\{e_1, x_1 e_1, x_1^2 e_1,x_1^3 e_1,\cdots\} \subset \mathrm{Lie}(\{f,e_1\}) \subset \mathrm{Lie}(\mathcal{F}_{\mathrm{ass}}(f)).
\end{align}
Then employing the Stone-Weierstrass theorem \cite{stone1948generalized}, we know that the function $x \mapsto (\mathrm{ReLU}(x_1),0,...,0)$ can be approximated by $\mathrm{Lie}(\mathcal{F}_{\mathrm{ass}}(f))$. 
Using the augmentation trick similar to the example in (\ref{eq:example_A_2d}) again, 
and together with Lemma~\ref{th:flow_Lie}, we can conclude that the flow map $\phi_{\mathrm{ReLU}}^\tau, \tau \in \mathbb{R},$ can be approximated by the hypothesis space $\mathcal{H}({\mathcal{F}}_{\mathrm{ass}}(f))$. Hence $\mathcal{F}_{\mathrm{ass}}(f)$ achieves the UAP.

(3) For the third case, when $n=2$ and the dimension $d\ge 2$, we need a more detailed computation of $\mathrm{Lie}(\mathcal{F}_{\mathrm{ass}}(f))$. According to the results of Cuchiero et al. \cite{Cuchiero2020Deep}, 
the Lie algebra generated by the control family $\mathcal{F} = \mathcal{F}_0 \cup \{V_4, V_5\}$ is rich enough to generate all polynomial vector fields on $\mathbb{R}^d$,
where $V_4(x)=(x_d^2,0,\cdots,0), V_5(x)=(x_1x_d,x_2x_d,\cdots,x_d^2)$. Based on this, we can compute that  
\begin{align}
 \mathrm{Lie}(\mathcal{F}_{\mathrm{ass}}(g)) = \mathrm{Lie}(\mathcal{F}),
\end{align} 
where $g(x) = x^2 = (x_1^2,\cdots,x_d^2)$ acts elementwise. In fact, let $A = (a_{ij})_{d \times d}$, we have
\begin{align*}
 [A,g](x) &= (\nabla x^2) Ax - Ax^2 
 = \left( \begin{matrix}
        2x_1(a_{11}x_1 + ... + a_{1d}x_d) - (a_{11}x_1^2 + ... + a_{1d}x_d^2) \\
        \vdots \\
        2x_d(a_{d1}x_1 + ... + a_{dd}x_d) - (a_{d1}x_1^2 + ... + a_{dd}x_d^2)
    \end{matrix}\right).
\end{align*}
Set $A=E_{id}$ and denote
$u_i(x):=[E_{id},g](x)=(2x_ix_d-x_d^2) e_i$, 
one can verify that 
\begin{align*}
    &[\tfrac{1}{2}E_{1d},u_1](x) = (x_d^2,0,\cdots,0)^T, \quad [E_{dd},u_d](x) = (0,\cdots,0,x_d^2)^T, \\
    & [\tfrac{1}{2}E_{dd}+E_{ii}, u_i](x) = x_ix_d e_i, \quad \text{for } i=2,\cdots,d-1. 
\end{align*}
Then we can conclude that $V_4,V_5\in \operatorname{Lie}(\mathcal{F}_{\mathrm{ass}}(g))$ and $\mathrm{Lie}(\mathcal{F}_{\mathrm{ass}}(g)) = \mathrm{Lie}(\mathcal{F})$.
To handle $f$ instead of $g$, we only need to observe that $[E_{11}, f](x) = x_1^2 e_1$ and once again make use of the augmentation trick similar to (\ref{eq:example_A_2d}). Consequently, $\mathcal{F}$, $\mathcal{F}_{\mathrm{ass}}(g)$ and $\mathcal{F}_{\mathrm{ass}}(f)$ achieve the UAP.

The proof is finished now.
\end{proof}

Note that the above proof does not work in the case $n=2$ and $d=1$, i.e., when 
$
f(x)=x^2, x \in \mathbb{R}.
$
In this situation, the flow map $\phi_f^\tau(x) = \frac{x}{1-\tau x}$ is well defined when $t$ is small, and any function $\phi$ in the hypothesis space $\mathcal{H}(\mathcal{F}_{\mathrm{ass}}(f))$ has the following form
\begin{align}
        \phi(x) = \frac{a x+b}{cx+d}, \quad a,b,c,d \in \mathbb{R},
\end{align}
which is known as the M\"obius transformations. The expressivity of such a hypothesis space is limited and has no UAP for general monotonic functions.

\subsubsection{Non-coordinate-separable cases}

Here, we employ some techniques to reduce the case where $f$ is not coordinate-separable to the coordinate-separable case. This reduction allows us to leverage the results already established for coordinate-separable functions.

\echo{
$f$ is continuously differentiable and $L^1$ integrable, 
        $\|x\|^d \|\nabla f(x)\|$ is bounded, and $\int_{\mathbb{R}^d}f(x)dx \neq 0$.
}

\begin{proof}[Proof of Theorem \ref{th:UAP_of_F_ass} for condition 3)]

Without loss of generality, we suppose the first component $f_1$ of $f=(f_1,...,f_d)$ has nonzero integral  $\int_{\mathbb{R}^d} f_1(x) dx \neq 0$. Then for any $x=(x_1,\cdots, x_d)\in \mathbb{R}^d$, we can define a function $\bar{f}: \mathbb{R}^d \to \mathbb{R}^d$ as follows:
\begin{align}
    \label{eq:def_bar_f}
 \bar{f}(x) 
 = \int_{\mathbb{R}^{d-1}} f(x_1,y) dy
 = \int_{\mathbb{R}^{d-1}} f \left( x+\sum_{i=2}^ds_ie_i \right) ds_2 \cdots ds_d,
\end{align}
where $y = (x_2,...,x_d)$ denotes the last $d-1$ components of $x$.
The Fubini's theorem (see \cite[Chap. 6]{bruckner1997real} for example) ensures that the $\bar f$ is well defined and integrable. In addition, $\bar{f}$ only depends on $x_1$ and hence is of the form $\bar{f}(x) = (\bar{f}_1(x_1),\cdots, \bar{f}_d(x_1))$ where $\bar{f}_1(x_1)$ has nonzero integral. It is implied that $\bar{f}_1(x_1)$ is nonlinear. 

The condition $\|x\|^d \|\nabla f(x)\|$ is bounded implies that there is a constant $C$ such that $(1 + \|x\|^d) \|\nabla f(x)\| \le C$, which means the gradient $\nabla f$ decays, ensuring that $f$ and $\bar{f}$ are globally Lipschitz continuous. 
In fact, the derivative of $\bar f$ is bounded
\begin{align}
 |\bar f'(x_1)|
 \le 
    \int_{\mathbb{R}^{d-1}} \| \nabla f(x) \| d x_2 \cdots d x_d 
 \le 
    \int_{\mathbb{R}} \frac{ C \omega_{d-2} r^{d-2}}{1+r^d} dr
 < \infty,
\end{align}
where $\omega_{d-2}$ is the volume of the unit ball in $\mathbb{R}^{d-2}$.

According to the condition 1), we can see that the control family $\mathcal{F}_{\mathrm{ass}}(g)$ achieves UAP where the function $g: x \to (\bar{f}_1(x_1),0,...,0)$ is coordinate-separable, globally Lipschitz continuous and nonlinear.
Now we only need to prove that the flow maps $\phi_{g}^t, t \in \mathbb{R}$ can be approximated by $\mathcal{H}(\mathcal{F}_{\mathrm{ass}}(f))$.
This is guaranteed by two facts: 
\begin{itemize}
    \item[(1)] $g(x)$ can be approximated by the following $g_{\varepsilon}(x)$,
\begin{align*}
 g_{\varepsilon}(x)
 :=
 A_{\varepsilon}^{-1} \bar{f} (A_{\varepsilon} x)
 = \bigl(\bar{f}_1(x_1),\;\varepsilon \bar{f}_2(x_1),\;\dots,\;\varepsilon \bar{f}_d(x_1)\bigr).
\end{align*}
where $A_{\varepsilon} = \diag(1,1/\varepsilon,...,1/\varepsilon)$ and $\varepsilon>0$ is arbitrarily small. Hence $\phi_{g}^t$ can be approximated by 
$
\phi_{g_\varepsilon}^t = A_{\varepsilon}^{-1}\circ \phi^t_{\bar{f}} \circ A_{\varepsilon} 
\in 
\mathcal{H}(\mathcal{F}_{\mathrm{ass}}(\bar f))
$

    \item[(2)] $\bar f$ can be approximated by $\hat{\mathcal{F}}_{\mathrm{ass}}(f)$ defined in (\ref{eq:hat_F_ass_f}) because $f$ decays and hence the integral in (\ref{eq:def_bar_f}) can be approximated by finite summation of shifts of $f$. As a consequence, $\phi^t_{\bar{f}}$ can be approximated by  $\mathcal{H}(\mathcal{F}_{\mathrm{ass}}(f))$.
    
\end{itemize}

The proof is complete.
\end{proof}

\subsection{Proof for Example \ref{example:permute_ReLU}}

\echo{
\begin{example}[Example \ref{example:permute_ReLU}]
 Let function $f$ be the map $(x_1,x_2) \mapsto (\mathrm{ReLU}(x_2),\mathrm{ReLU}(x_1))$, then the associated affine control family $\mathcal{F}_{\mathrm{ass}}(f)$ can not achieve the UAP.
\end{example}
}

The function $f$ in Example~\ref{example:permute_ReLU} is divergence-free. Therefore, the following lemma is sufficient to support the claim.

\begin{lemma}
 Let $f:\mathbb{R}^2 \to \mathbb{R}^2$ be globally Lipschitz continuous and divergence free, then there exists $g \in \operatorname{Diff}_0(\mathbb{R}^2)$ that can not be approximated by $\mathcal{H}(\mathcal{F}_{\mathrm{ass}}(f))$.
\end{lemma}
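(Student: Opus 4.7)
\emph{Proof plan.} The plan is to observe that every element of $\mathcal{H}(\mathcal{F}_{\mathrm{ass}}(f))$ has a constant positive Jacobian determinant, while a generic $g \in \operatorname{Diff}_0(\mathbb{R}^2)$ with non-constant Jacobian cannot be uniformly approximated. Since $f$ is divergence-free and globally Lipschitz, Liouville's formula (or approximation by smooth divergence-free fields) yields $\det D\phi_{\pm f}^t(x) \equiv 1$. For an affine vector field $x \mapsto Ax+b \in \mathcal{F}_0$, the flow $\phi^t(x) = e^{At}x + b_t$ has constant Jacobian determinant $e^{t\,\mathrm{tr}\,A}$. By the chain rule, a composition of diffeomorphisms with constant positive Jacobian has constant positive Jacobian, so every $\varphi \in \mathcal{H}(\mathcal{F}_{\mathrm{ass}}(f))$ satisfies $\det D\varphi \equiv c_\varphi > 0$ for some constant $c_\varphi$.

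Next I would choose $g(x_1,x_2) := (x_1 + x_1^3,\, x_2)$, which lies in $\operatorname{Diff}_0(\mathbb{R}^2)$ with $\det Dg = 1 + 3x_1^2$, and fix a compact $\Omega := [-1,1]^2$ on which $\det Dg$ takes distinct values. Supposing for contradiction that a sequence $\varphi_n \in \mathcal{H}(\mathcal{F}_{\mathrm{ass}}(f))$ with $c_n := \det D\varphi_n$ satisfies $\varphi_n \to g$ uniformly on $\Omega$, I consider the pushforward measures $\mu_n := (\varphi_n)_* (\mathbf{1}_\Omega\, dx)$ and $\mu := g_*(\mathbf{1}_\Omega\, dx)$ on $\mathbb{R}^2$. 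For any $\psi \in C_c(\mathbb{R}^2)$, uniform convergence of $\varphi_n$ to $g$ gives $\int \psi\, d\mu_n = \int_\Omega \psi(\varphi_n(x))\, dx \to \int_\Omega \psi(g(x))\, dx = \int \psi\, d\mu$, hence $\mu_n \to \mu$ weakly as measures. The change-of-variables formula shows $\mu_n$ has constant density $1/c_n$ on $A_n := \varphi_n(\Omega)$, whereas $\mu$ has density $\rho_g(y) := 1/\det Dg(g^{-1}(y))$ on $g(\Omega)$.

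For any $y_0 \in \operatorname{int}(g(\Omega))$, I pick a small open ball $B$ around $y_0$ with $\bar B \subset \operatorname{int}(g(\Omega))$. The key technical step is to show that $B \subset A_n$ for all large $n$: choosing an open set $U$ with $\bar U \subset \operatorname{int}(\Omega)$ slightly enlarging $g^{-1}(\bar B)$, the Brouwer degree satisfies $\deg(g, \partial U, y) = 1$ for every $y \in B$; the linear homotopy $(1-s) g + s \varphi_n$, $s \in [0,1]$, stays away from every $y \in B$ on $\partial U$ once $\|\varphi_n - g\|_{C(\Omega)} < \operatorname{dist}(B, g(\partial U))$, so homotopy invariance gives $\deg(\varphi_n, \partial U, y) = 1$ and hence $y \in \varphi_n(U) \subset A_n$. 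Since $\mu(\partial B) = 0$, the Portmanteau theorem then yields
\begin{align*}
\frac{\operatorname{Area}(B)}{c_n} \;=\; \mu_n(B) \;\longrightarrow\; \mu(B) \;=\; \int_B \rho_g(y)\, dy.
\end{align*}
Hence $c_n \to \operatorname{Area}(B)/\int_B \rho_g$, and shrinking $B$ to $y_0$ (using continuity of $\rho_g$) gives $c_n \to 1/\rho_g(y_0) = \det Dg(g^{-1}(y_0))$. Since this limit of the fixed sequence $c_n$ must be independent of $y_0$, $\det Dg$ must be constant on $\Omega$, contradicting our choice of $g$.

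The main obstacle is establishing the eventual containment $B \subset A_n$: $C^0$ convergence alone does not control derivatives or image areas, so a naive area comparison does not pass to the limit. The degree-theoretic argument provides exactly the containment needed to translate weak convergence of the pushforward measures $\mu_n$ into the pointwise information $c_n \to \det Dg(g^{-1}(y_0))$, which is what produces the contradiction with non-constancy of $\det Dg$.
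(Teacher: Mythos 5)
Your proposal is correct, and its first half (every element of $\mathcal{H}(\mathcal{F}_{\mathrm{ass}}(f))$ has an a.e.-constant positive Jacobian determinant, because both the divergence-free field $f$ and affine fields generate flows with spatially constant Jacobian, and constancy is preserved under composition) is exactly the paper's argument. Where you diverge is in how the contradiction is extracted from uniform approximation. The paper takes $g(x)=(e^{x_1},x_2)$, two disjoint congruent disks $\Omega_1,\Omega_2$, and compares $\mathrm{V}(g(\Omega_i))=\int_{\Omega_i}e^{x_1}dx$ with $\mathrm{V}(\phi(\Omega_i))=c_\phi\,\mathrm{V}(\Omega_i)$; since the latter two are equal while the former two are not, it suffices to bound $|\mathrm{V}(g(\Omega_i))-\mathrm{V}(\phi(\Omega_i))|$ by $O(\varepsilon)$ when $\|g-\phi\|_{C(\Omega)}<\varepsilon$, which the paper does via the Steiner formula. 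Your route instead uses Brouwer degree and a linear homotopy to establish the eventual containment $B\subset\varphi_n(\Omega)$ of a small ball, and then weak convergence of the pushforward measures plus Portmanteau to identify $\lim_n c_n$ with $\det Dg(g^{-1}(y_0))$ at every interior point, forcing $\det Dg$ to be constant. Your version is heavier machinery for the same payoff, but it makes fully rigorous precisely the step the paper treats somewhat loosely: passing from $C^0$-closeness to a volume comparison requires controlling the image sets, and the Steiner bound as invoked in the paper (with the perimeter of $g(\Omega_i)$ standing in for that of the possibly non-convex $\phi(\Omega_i)$, and only a one-sided containment being immediate) is exactly what your degree-theoretic containment replaces cleanly. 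One shared caveat, not a gap relative to the paper: for merely Lipschitz $f$ the Jacobian of the flow exists only a.e., so "constant Jacobian determinant" and the change-of-variables identity $\mu_n = c_n^{-1}\mathbf{1}_{A_n}\,dy$ should be read in the a.e./area-formula sense (your parenthetical mollification remark, together with the fact that flows of globally Lipschitz fields are global bi-Lipschitz homeomorphisms so that $\varphi_n$ is injective, covers this).
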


\begin{proof}
    
 Since the divergence of any affine map is constant and $\nabla \cdot f\equiv 0$ , we have $\det (\nabla \phi^\tau_f(x)), \det (\nabla \phi^\tau_{A\cdot+b}(x) )$ are both constant with respect to $x$. In fact, denoted $J(t,x)=\nabla \phi_f^t(x)$, the chain rule of derivatives shows $\frac{d}{dt}J(t,x) = \nabla f(\phi_f^t(x)) J(t,x)$, and the Jacobian formula \cite[Part III, Sec. 8.3]{Magnus2019Matrix}, gives: 
    \begin{align}
 \frac{d}{dt}\det J(t,x) 
 &= 
 \det J(t,x) \cdot \operatorname{tr} \big(J(t,x)^{-1} \frac{d}{dt}J(t,x)\big)\\
        & = \det J(t,x) \cdot \operatorname{tr}\big(J(t,x)^{-1}\nabla f(\phi_f^t(x)) J(t,x)\big)\\
        & = \det J(t,x) \cdot \operatorname{tr}\big(\nabla f(\phi_f^t(x))\big)\\
        & = \det J(t,x) \cdot \nabla \cdot \big(f (\phi_f^t(x))\big) = 0.
    \end{align}
 Then for every function $\phi \in \mathcal{H}(\mathcal{F}_{\mathrm{ass}}(f))$, the Jacobian determinant 
    \begin{align}
 J_\phi(x) = \det (\nabla \phi(x)), \quad x \in \mathbb{R}^2,
    \end{align}
 is also a constant function.

 To finish the proof, it is enough to verify the case of 
    $g(x)=(e^{x_1},x_2)\in \operatorname{Diff}_0(\mathbb{R}^2)$.
 We prove by contradiction that $g$ cannot be approximated.
 The key point is that the Jacobian determinant of $g$ is $J_g(x) = e^{x_1}$, which is not a constant function.

 Now suppose $g$ can be approximated by $\mathcal{H}(\mathcal{F}_{\mathrm{ass}}(f))$. 
 Take two points $P_1 = (-2,0)$ and $P_2 = (2,0)$, and let $\Omega_1$ and $\Omega_2$ denote the disks of radius $R=1$ centered at $P_1$ and $P_2$, respectively. Choose $\Omega = [-4,4]^2 \supset \Omega_1 \cup \Omega_2$.
 It is implied that for any $\varepsilon > 0$, there exists 
    $\phi \in \mathcal{H}(\mathcal{F}_{\mathrm{ass}}(f))$
 such that 
    $\|g - \phi\|_{C(\Omega)} < \varepsilon$.
 Consider the volumes of $g(\Omega_i)$ and $\phi(\Omega_i)$, which can be calculated by integrating their Jacobian determinants:
    \begin{align}
 \operatorname{V}(g(\Omega_i)) = \int_{g(\Omega_i)} d x = \int_{\Omega_i} J_g(x)\, dx, \quad 
 \operatorname{V}(\phi(\Omega_i)) = \int_{\phi(\Omega_i)} d x = \int_{\Omega_i} J_\phi(x)\, dx.
    \end{align}
 Since $g$ approximates $\phi$, the volume of $g(\Omega_i)$ is close to that of $\phi(\Omega_i)$, which can be quantitatively estimated by the Steiner formula~\cite{Steiner1840parallele,Gray2004Steiner}: 
    \begin{align}
 \left| \int_{\Omega_i} (J_g(x) - J_\phi(x))\, dx \right|  
 = 
 \left| \operatorname{V}(g(\Omega_i)) - \operatorname{V}(\phi(\Omega_i)) \right| 
 \le 
 M \varepsilon + \pi \varepsilon^2,
    \end{align}
 where $M = \max(l(\partial g(\Omega_1)),l(\partial g(\Omega_2)))$, $l(\partial g(\Omega_i))$ is the length of boundary of $ g(\Omega_i)$. 
 Notice that $J_\phi(x)$ is a constant and the volume of $\Omega_1$ and $\Omega_2$ are the same, it is implied that
    \begin{align}
        \Delta : = \left| \int_{\Omega_1} J_g(x) dx - \int_{\Omega_2} J_g(x) dx \right|  
 \le 2 M \varepsilon + 2 \pi \varepsilon^2. 
    \end{align}
 The arbitrarity of $\varepsilon$ implies $\Delta=0$, which is not true for the function $g$ we chose. Therefore, $\mathcal{H}(\mathcal{F}_{\mathrm{ass}}(f))$ cannot approximate $g$.
\end{proof}

\subsection{Proof for Example \ref{example:Gaussian}}

\echo{
\begin{example}[Example \ref{example:Gaussian}]
 If each component of $f$ is Gaussian, then the associated affine control family $\mathcal{F}_{\mathrm{ass}}(f)$ achieves the UAP.
\end{example}
}

It is obvious that any Gaussian function $f$ is continuously differentiable and $L^1$ integrable, and the integral $\int_{\mathbb{R}^d}f(x)dx \neq 0$. In addition, the gradients are also exponentially decaying, which ensures that $\|x\|^d \|\nabla f(x)\|$ is bounded. 
In addition to the Gaussian function, smooth functions with compact support also satisfy these conditions.

\subsection{Proof of Theorem \ref{th:UIP_of_F_diag}}

\echo{
\begin{theorem}[UIP of diagnoal affine invariant control families,Theorem \ref{th:UIP_of_F_diag}]
 Assume $f:\mathbb{R}^d \to \mathbb{R}^d$ is globally Lipschitz continuous, and
    \begin{enumerate}
    \item[1)] $f$ is nonlinear, then $\mathcal{F}_{\operatorname{aff}}(f)$ achieves UAP and UIP.
    \item[2)] $f$ is fully coordinate nonlinear, then $\mathcal{F}_{\operatorname{diag}}(f)$ achieves UAP and UIP.
    \end{enumerate}
\end{theorem}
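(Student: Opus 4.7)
The plan is to treat the two parts separately. For part 1), the UAP of $\mathcal{F}_{\mathrm{aff}}(f)$ for nonlinear globally Lipschitz $f$ is the result of Duan et al., while the UIP is due to Cheng et al.; both are quoted in the introduction of the paper. Thus part 1) is obtained by synthesizing these two existing results, and no new argument is required. The genuinely new content lies in part 2), where the UAP of $\mathcal{F}_{\mathrm{diag}}(f)$ under full coordinate nonlinearity is already established by Duan et al., so the task reduces to proving UIP under the same hypothesis.

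To establish UIP for $\mathcal{F}_{\mathrm{diag}}(f)$, the plan is to invoke Theorem \ref{th:localUIP_to_UIP}. The control family is symmetric (flipping the sign of the diagonal matrix $D$ replaces $g$ by $-g$), and UAP is in hand. Hence it suffices to prove local UIP. The strategy is to choose a single configuration of $N$ distinct points $x_1,\ldots,x_N\in\mathbb{R}^d$ together with a smooth finite-dimensional parameterization $\theta\mapsto\varphi_\theta\in\mathcal{H}(\mathcal{F}_{\mathrm{diag}}(f))$ with $\varphi_0=\mathrm{id}$, and then show that the evaluation map
\begin{equation*}
\Phi:\theta \longmapsto (\varphi_\theta(x_1),\ldots,\varphi_\theta(x_N))\in(\mathbb{R}^d)^N
\end{equation*}
has surjective derivative at $\theta=0$. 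The inverse function theorem then yields a radius $\delta>0$ within which any prescribed targets $(y_1,\ldots,y_N)$ can be realized by some $\varphi_\theta$, which is precisely local UIP. Combining this with UAP through Theorem \ref{th:localUIP_to_UIP} closes part 2).

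The heart of the argument, and the main obstacle, is the construction of $dN$ independent infinitesimal generators inside $\mathcal{F}_{\mathrm{diag}}(f)$ that move each $x_i$ in each coordinate direction $e_k$ essentially independently of the other points. The plan is as follows: first, pick the points $x_i$ so that, for every coordinate $k$, the projections $x_i^{(k)}$ are pairwise distinct and well separated. Then, for each pair $(i,k)$, exploit the fact that the $k$-th component $f_k$ is coordinate nonlinear: there exists $b_{ik}\in\mathbb{R}^d$ such that $t\mapsto f_k(\cdot\,e_k+b_{ik})$ is nonlinear in $t$, which in particular provides a value of the argument at which the scalar derivative of $f_k$ is nonzero. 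Using diagonal $\Lambda$ with a very large entry in the $k$-th coordinate together with an appropriate $b$, one can arrange that the argument $\Lambda x_j+b$ visits this nontrivial regime of $f_k$ when $j=i$ but lies in a region producing negligible motion when $j\neq i$; the separation of the $x_j^{(k)}$ together with global Lipschitz continuity controls the off-target bleed. By then choosing $D=\varepsilon\,e_k e_k^\top$ one extracts motion strictly in direction $e_k$. Lemma \ref{th:flow_sum_f} allows these infinitesimal perturbations to be combined through composition, so the resulting $dN\times dN$ linearization matrix can be made diagonally dominant and hence invertible.

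The delicate point that requires care is ensuring that the off-diagonal entries of this linearization matrix are genuinely small: because $\mathcal{F}_{\mathrm{diag}}(f)$ only allows diagonal rescaling, we cannot rotate in space to isolate a single point, and we must instead rely entirely on the coordinate-wise nonlinear profile of each $f_k$ together with the coordinate-wise separation of the $x_i$. Once this localization is established and the implicit function theorem delivers local UIP for $\mathcal{H}(\mathcal{F}_{\mathrm{diag}}(f))$, Theorem \ref{th:localUIP_to_UIP} upgrades the combination of UAP and local UIP to full UIP, finishing the proof of part 2) and hence of the theorem.
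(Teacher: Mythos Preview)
Your high-level plan matches the paper exactly: part 1) is assembled from the cited results of Duan et al.\ and Cheng et al., and for part 2) you quote UAP from Duan et al., verify that $\mathcal{F}_{\mathrm{diag}}(f)$ is symmetric, and then reduce UIP to local UIP via Theorem~\ref{th:localUIP_to_UIP}. The paper proceeds the same way, and both approaches boil the local UIP down to the span condition
\[
\operatorname{span}\bigl\{(g(x_1),\dots,g(x_N)) : g\in\mathcal{F}_{\mathrm{diag}}(f)\bigr\}=\mathbb{R}^{dN}
\]
at a well-chosen configuration $(x_1,\dots,x_N)$. Where you diverge is in how this span condition is established, and this is where your proposal has a genuine gap.

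Your plan is to build $dN$ vector fields $g_{i,k}(x)=E_{kk}f(\Lambda_{i,k}x+b_{i,k})$ whose evaluation matrix at $(x_1,\dots,x_N)$ is diagonally dominant, by sending the $k$-th diagonal entry of $\Lambda_{i,k}$ to be large so that $\Lambda_{i,k}x_i+b_{i,k}$ lands in a ``nontrivial regime'' of $f_k$ while $\Lambda_{i,k}x_j+b_{i,k}$ for $j\neq i$ lands in a region of ``negligible motion''. But global Lipschitz continuity gives you no decay whatsoever: it bounds differences, not values. If, say, $f_k(t e_k+b)=t$ or $f_k(t e_k+b)=\sin t$ along the $e_k$-axis, then pushing $\Lambda x_j+b$ far away does nothing to suppress $f_k(\Lambda x_j+b)$; the off-diagonal entries can be as large as (or larger than) the diagonal ones. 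The phrase ``global Lipschitz continuity controls the off-target bleed'' is therefore unjustified, and the diagonal-dominance mechanism does not go through under the stated hypotheses. (A related slip: nonlinearity of $t\mapsto f_k(te_k+b_{ik})$ does not mean its derivative is nonzero somewhere---affine maps have nonzero derivative too---but rather that the derivative is nonconstant.)

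The paper avoids this difficulty entirely. Instead of constructing explicit generators, it argues by contradiction: if the span were proper, some nonzero $c\in\mathbb{R}^{dN}$ would annihilate every lifted field, giving $\sum_j c_j^{(K)} f_K(\Lambda x_j+b)=0$ for all diagonal $\Lambda$ and all $b$. Taking the Fourier transform in $b$ turns this into $\sum_j c_j^{(K)} e^{\mathrm{i}\xi^{\!\top}\!\Lambda x_j}\widehat{f}_K(\xi)=0$; picking a nonzero spectral point $\xi$ (which exists because $f_K$ is nonlinear) and specializing $\Lambda=sE_{ll}$ along a coordinate where $\xi_l\neq 0$ reduces to a vanishing linear combination of exponentials $e^{\mathrm{i}s\xi_l x_j^{(l)}}$ with distinct frequencies, forcing all $c_j^{(K)}=0$. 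This works uniformly on the open connected set $\Omega_N=\{x_j^{(k)}<x_{j'}^{(k)}\ \text{for all}\ j<j',k\}$, so Chow--Rashevskii gives controllability there and hence local UIP. You should replace the localization/diagonal-dominance step with this Fourier argument, or else supply a different proof that the evaluation matrix is invertible that does not rely on smallness of off-diagonal entries.
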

}

We first recall the well-known Chow-Rashevskii theorem \cite[Chap.~5]{Agrachev2004Control} in $\mathbb{R}^d$.

\begin{proposition}[Chow-Rashevskii Theorem]
 Let $\mathcal{F}$ be a symmetric family of smooth vector fields on an open connected domain $\Omega \subset \mathbb{R}^d$. If $\left. \operatorname{Lie} (\mathcal{F}) \right|_q = \mathbb{R}^d$ for all $q \in \Omega$, then the system is controllable in $\Omega$.
\end{proposition}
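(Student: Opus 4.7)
The plan is to prove the Chow--Rashevskii theorem by the standard \emph{reachable set} argument: fix an arbitrary base point $q_0 \in \Omega$ and define the reachable set
\[
\mathcal{R}(q_0) := \left\{ \phi_{f_n}^{\tau_n} \circ \cdots \circ \phi_{f_1}^{\tau_1}(q_0) \,\middle|\, f_i \in \mathcal{F},\, \tau_i \ge 0,\, n \in \mathbb{N} \right\}.
\]
Because $\mathcal{F}$ is symmetric, reachability is a symmetric (hence equivalence) relation on $\Omega$, so it suffices to show $\mathcal{R}(q_0)=\Omega$. The proof will proceed by showing that $\mathcal{R}(q_0)$ is both open and closed in $\Omega$; since $\Omega$ is connected and $\mathcal{R}(q_0)\ni q_0$ is nonempty, this yields the conclusion.

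For the openness step, I would show that for every $q \in \mathcal{R}(q_0)$ the set $\mathcal{R}(q_0)$ contains a neighborhood of $q$. The key device is to build, from any iterated Lie bracket $P = [f_{1},[f_2,\ldots,[f_{k-1},f_k]\ldots]]\in\mathrm{Lie}(\mathcal{F})$, a map into $\mathcal{R}(q)$ whose differential realizes the vector $P(q)$. This follows by iterating Proposition~\ref{prop:one-order-Lie}: a single Lie bracket flow $\phi_{[f,g]}^t$ is approximated by compositions of flows of $\pm f,\pm g$ for times of order $\sqrt{t}$, and nesting this gives an approximation for any iterated bracket. Using the hypothesis $\left.\mathrm{Lie}(\mathcal{F})\right|_q = \mathbb{R}^d$, choose iterated brackets $P_1,\ldots,P_d$ such that $\{P_1(q),\ldots,P_d(q)\}$ is a basis of $\mathbb{R}^d$. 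Then the map
\[
\Psi(t_1,\ldots,t_d) \;:=\; \widetilde{\phi}_{P_d}^{\,t_d} \circ \cdots \circ \widetilde{\phi}_{P_1}^{\,t_1}(q),
\]
where each $\widetilde{\phi}_{P_i}^{t_i}$ is a composition of flows of elements of $\mathcal{F}$ approximating $\phi_{P_i}^{t_i}$, has differential at $0$ equal to $[P_1(q)\mid\cdots\mid P_d(q)]$, which is invertible. The inverse function theorem then produces an open neighborhood of $q$ inside $\mathcal{R}(q)$; and since $\mathcal{F}$ is symmetric, $\mathcal{R}(q)\subset\mathcal{R}(q_0)$.

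For the closedness step, suppose $q_n \in \mathcal{R}(q_0)$ with $q_n \to q^* \in \Omega$. Apply the openness argument at the point $q^*$ to obtain an open neighborhood $U \subset \mathcal{R}(q^*)$ of $q^*$. For $n$ sufficiently large we have $q_n \in U$, so there exists a composition of flows taking $q^*$ to $q_n$. Reversing this composition using the symmetry of $\mathcal{F}$ (and recalling that $\phi_{-f}^{\tau} = (\phi_f^{\tau})^{-1}$) yields a composition of flows mapping $q_n$ to $q^*$. Concatenating with the composition taking $q_0$ to $q_n$ gives $q^* \in \mathcal{R}(q_0)$.

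The main technical obstacle is making the differential-of-$\Psi$ calculation rigorous: showing that the nested-bracket approximation can be carried out so that the resulting map $\Psi$ is $C^1$ in the time parameters and its partial derivative in $t_i$ at $0$ genuinely equals $P_i(q)$, rather than an error term that only vanishes after the discrete limit in Proposition~\ref{prop:one-order-Lie}. This is handled by passing, inside the parametrization, to a rescaled and desingularized form of the bracket approximant (e.g.\ using $\sqrt{t}$-rescalings as in the proof of Proposition~\ref{prop:one-order-Lie}) so that $\Psi$ becomes Lipschitz in $(t_1,\ldots,t_d)$ and the limit differential exists and equals the claimed Jacobian. Once this local surjectivity lemma is in place, the open/closed dichotomy and the connectedness of $\Omega$ close the proof.
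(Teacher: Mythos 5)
The paper does not actually prove this proposition; it is quoted as the classical Chow--Rashevskii theorem with a citation to Agrachev--Sachkov, so there is no in-paper argument to compare against. Your outline is the standard open-and-closed reachable-set proof from the literature, and the global architecture (symmetry makes reachability an equivalence relation, openness of each class plus connectedness of $\Omega$ forces a single class) is correct. The closedness step in particular is fine, and is even redundant once you observe that the complement of $\mathcal{R}(q_0)$ is a union of other equivalence classes, each of which is open by the same argument.

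The genuine gap is in the openness step. The map $\Psi(t_1,\ldots,t_d)$ built from the bracket approximants $\phi_{-g}^{\sqrt{t}}\circ\phi_{-f}^{\sqrt{t}}\circ\phi_{g}^{\sqrt{t}}\circ\phi_{f}^{\sqrt{t}}$ is \emph{not} $C^1$ near the origin, no matter how you rescale: it satisfies $\Psi(t)=q+\sum_i t_i P_i(q)+o(|t|)$, i.e.\ it is differentiable \emph{at the single point} $t=0$, but its partial derivatives do not extend continuously, and for iterated brackets the time scales degrade to $t^{1/2^k}$. The classical inverse function theorem therefore does not apply, and your proposed fix (``$\Psi$ becomes Lipschitz and the limit differential exists'') does not rescue it, since Lipschitz plus invertible derivative at one point is not enough for local invertibility. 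The standard repairs are either (i) a topological surjectivity lemma: a continuous map that is an $o(|t|)$ perturbation of a linear isomorphism covers a neighborhood of $q$, proved via Brouwer's fixed point theorem or degree theory; or (ii) bypassing brackets entirely with the Sussmann--Stefan orbit theorem, under which the orbit is an immersed submanifold whose tangent space contains $\operatorname{Lie}(\mathcal{F})|_q=\mathbb{R}^d$, hence is open. You should also specify how $\widetilde{\phi}_{P_i}^{t_i}$ is defined for $t_i<0$ (reverse the order of the commutator composition, which symmetry of $\mathcal{F}$ permits), since the $\sqrt{t}$ construction as written only makes sense for $t_i\ge 0$. With the Brouwer-based surjectivity lemma substituted for the inverse function theorem, your proof closes.
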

Here, a special case is when $\left. \mathrm{span}(\mathcal{F})\right|_q = \mathbb{R}^2$, in which controllability still holds. This version is more suitable for control families that are not smooth.

Then the problem of simultaneously controlling $N$ distinct points, 
$x_1,\dots,x_N,$ in $\mathbb R^d$ is equivalent to the classical controllability of the lifted $Nd$‑dimensional system
\begin{align}\label{eq:Nd_system}
 \dot X_N(t)
 =\bigl(g(x_1(t)),\dots,\,g(x_N(t))\bigr) 
    \in \mathcal F_N
 ,
\end{align}
where 
$X_N = (x_1,\dots,x_N) \in \mathbb{R}^{dN}$
and 
$\mathcal F_N = \bigl\{\,(g(x_1),\dots,g(x_N))\mid g\in\mathcal{F} \bigr\}= \mathcal{F}^{\otimes N}$
is the lifted control family.
Therefore, to establish the UIP, 
it suffices to strengthen the condition 
$\operatorname{Lie}(\mathcal{F})|_{q} = \mathbb{R}^d$ 
by requiring that $\operatorname{Lie}(\mathcal{F})$ possesses the interpolation property \cite{Cuchiero2020Deep}.
That is, for any positive integer $N$, any $N$ distinct points 
$x_1, \ldots, x_N \in \mathbb{R}^d$, and any prescribed target points 
$v_1, \ldots, v_N \in \mathbb{R}^d$ (which may coincide), there exists 
$g \in \operatorname{Lie}(\mathcal{F})$ such that  
\begin{align}
 g(x_j) = v_j, \quad j = 1, \ldots, N.
\end{align}
A special case arises when $\operatorname{Lie}(\mathcal{F})$ or 
$\operatorname{span}(\mathcal{F})$ contains all polynomial vector fields.  
In this situation, the control family $\mathcal{F}$ is capable of achieving the UIP.

Now we present the proof of Theorem \ref{th:UIP_of_F_diag}.

\begin{proof}[Proof of Theorem \ref{th:UIP_of_F_diag}]

When $f=(f_1,...,f_d)$ satisfies the respective conditions, Duan et al. \cite{Duan2025Minimal} proved that both $\mathcal{F}_{\mathrm{aff}}(f)$ and $\mathcal{F}_{\mathrm{diag}}(f)$ achieve the UAP, while Cheng et al. \cite{Cheng2023Interpolation} established that $\mathcal{F}_{\mathrm{aff}}(f)$ achieves the UIP. Therefore, we only need to provide the case of $\mathcal{F}_{\mathrm{diag}}(f)$ that achieves the UIP. However, to present the proof with a unified perspective, we also give the proof idea for all cases.

For the UAP, to prove that $\mathcal{F}_{\operatorname{aff}}(f)$ or $\mathcal{F}_{\operatorname{diag}}(f)$ achieves the property, it suffices to show that $\operatorname{span}(\mathcal{F}_{\operatorname{aff}}(f))$ or $\operatorname{span}(\mathcal{F}_{\operatorname{diag}}(f))$ has sufficiently rich approximation capability. Note that the functions in $\operatorname{span}(\mathcal{F}_{\operatorname{aff}}(f))$ take the same form as single-hidden-layer neural networks, so the proof follows directly from the universal approximation theorem for neural networks. For $\mathcal{F}_{\operatorname{diag}}(f)$, although the matrices $D$ and $\Lambda$ in functions of the form $Df(\Lambda x+b)$ are restricted to be diagonal, the stronger nonlinearity assumption on $f$ is sufficient to ensure that $\operatorname{span}(\mathcal{F}_{\operatorname{diag}}(f))$ attains the desired approximation power.

For the UIP, the situation differs between $\mathcal{F}_{\operatorname{aff}}(f)$ and $\mathcal{F}_{\operatorname{diag}}(f)$. For $\mathcal{F}_{\operatorname{aff}}(f)$, since the matrices $S$ and $W$ in functions of the form $S f(Wx+b)$ are unrestricted, it is straightforward to show that $\operatorname{span}(\mathcal{F}_{\operatorname{aff}}(f))$ possesses a global interpolation property, thereby directly yielding the UIP. In contrast, for $\mathcal{F}_{\operatorname{diag}}(f)$, the matrices involved are diagonal, which limits us to establishing only a local interpolation property for $\operatorname{span}(\mathcal{F}_{\operatorname{diag}}(f))$. To obtain the UIP in this case, we exploit the fact that $\mathcal{H}(\mathcal{F}_{\operatorname{diag}}(f))$ satisfies the UAP, and then invoke Theorem~\ref{th:localUIP_to_UIP} to complete the proof.

Thus, it remains to show that $\mathcal{F}_{\operatorname{diag}}(f)$ achieves the local UIP.  
In fact, we will prove the interpolation property of $\operatorname{span}(\mathcal{F}_{\operatorname{diag}}(f))$ on the following connected open subset $\Omega_N$ of $\mathbb R^{dN}$ for any $N \in \mathbb{Z}^+$, 
\begin{align}
    \label{eq:Omega_N}
    \Omega_N
 =\bigl\{(x_1,\dots,x_N)\in\mathbb R^{dN}\,\bigm|\,
 x_j^{(k)}<x_{j'}^{(k)}\text{ for all }1\le j<j' \le N,\;1\le k\le d\bigr\}.
\end{align}
Here, each coordinate of  $x_j = \bigl(x^{(1)}_j, \ldots, x^{(d)}_j\bigr)$ increases with respect to $j$.
The connectness and openness of $\Omega_N$ are easy to verify, and the interpolation property means that for any $X_N\in\Omega_N$, we have
\begin{align}
    \label{eq:span_condition_dN}
 \operatorname{span} \mathcal{F}_N(X_N)=\mathbb{R}^{dN},
 \quad
 \mathcal{F}_N = \mathcal{F}_{\mathrm{diag}}(f) ^{\otimes N}.
\end{align}
We will use a proof by contradiction to establish equation (\ref{eq:span_condition_dN}), following the main ideas of Cheng et al \cite{Cheng2023Interpolation}. Once this equality is established, we can invoke the Chow–Rashevskii theorem to obtain the controllability of the control family $\mathcal{F}_N$ on $\Omega_N$, and the openness of $\Omega_N$ allows us to select $N$ distinct points that satisfy the requirements to achieve the local UIP.

Now we prove the equation (\ref{eq:span_condition_dN}). 
Suppose there exists $X_N=(x_1,\cdots,x_N)\in \Omega_N$ such that the linear subspace
$\operatorname{span}(\mathcal{F}_N(X_N))$ is not the whole space $\mathbb{R}^{dN}$, then there exists a nonzero normal vector $c \in \mathbb{R}^{dN}$ which is orthogonal to $\operatorname{span}(\mathcal{F}_N(X_N))$.
In other words, there exist $dN$ real numbers labeled by $c^{(k)}_j, 1\le j\le N, 1\le k\le d,$ with at least one $c^{(k)}_j$ nonzero, such that 
\begin{align}
    \label{eq:span_condition_zero}
    \sum_{k=1}^d\sum_{j=1}^N c^{(k)}_j g_k(x_j) = 0, \quad \forall g=(g_1,\cdots,g_d)\in \mathcal{F}_{\mathrm{diag}}(f).
\end{align}
Assume one nonzero componnet of $c$ is $c^{(K)}_{J}\neq 0$, and choose the function $g \in \mathcal{F}_{\operatorname{diag}}(f)$ as $g(x) = E_{KK} f(\Lambda x+b) = e_K f_K(\Lambda x+b)$, then we have
\begin{align}
    \sum_{j=1}^N c^{(K)}_j f_K(\Lambda x_j+b) = 0
\end{align}
holds for all diagonal matrix $\Lambda\in \mathbb{R}^{d\times d}$ and $b\in \mathbb{R}^d$.
Note that $f$ is globally Lipschitz and can be regarded as a tempered distribution. Taking the Fourier transform with respect to $b$, we have
\begin{align}
    \sum_{j=1}^N c^{(K)}_j e^{\mathrm{i} \xi^T \Lambda x_j} \widehat f_K(\xi) = 0, \quad \forall \xi\in \mathbb{R}^d
\end{align}
holds for all diagonal matrix $\Lambda\in \mathbb{R}^{d\times d}$, where $\widehat f_K$ is the Fourier transform of $f_K$.

Since $f_K$ is globally Lipschitz, there is a nonzero spectral vector $\xi=(\xi_1,\cdots,\xi_d)\in \operatorname{supp} \widehat{f}$, 
otherwise $\widehat{f}$ would be supported only at the origin, implying that $f$ is linear or polynomial, contradicting our assumption. 
Without loss of generality, we may assume $\xi_1\neq 0$ and let $\Lambda = s E_{11}$,  $s \in \mathbb{R}$, then 
\begin{align}\label{eq:span_condition_zero_exp}
    \sum_{j=1}^N c^{(K)}_j e^{\mathrm{i} s \xi_1 x_j^{(1)}} = 0
\end{align}
holds for any $s \in \mathbb{R}$. Since each $\{\xi_1 x_j^{(1)}\}$ is different from each other, the functions $e^{\mathrm{i} s \xi_1 x_j^{(1)}}, j=1,\cdots,N,$ are linearly independent. It follows that $c^{(K)}_j = 0$ for all $j = 1, \dots, N$, which is a contradiction for the assumption that $c^{(K)}_{J}\neq 0$. 
Thus, the equation (\ref{eq:span_condition_dN}) is implied, and the proof is complete.
\end{proof}

\begin{remark}
In the above proof, we considered the set $\Omega_N$ in equation~(\ref{eq:Omega_N}) under rather strong constraints; this choice is crucial for the subsequent steps.  
We note that using the function from Example~\ref{example:coordinate_nonlinear}
\begin{align}
 f\colon (x^{(1)},x^{(2)}) \mapsto \bigl(\sin x^{(1)} + \sin x^{(2)},\;\sin x^{(1)} + \sin x^{(2)}\bigr),
\end{align}
in the above argument prevents the subsequent steps from being carried out.
For instance, when $N=4$, take the four points
\begin{align}
 x_1=(1,1),\quad x_2=(-1,1),\quad x_3=(-1,-1),\quad x_4=(1,-1).
\end{align}
Then there exists a nonzero vector $c$ with 
$c^{(1)}=(c^{(1)}_1,c^{(1)}_2,c^{(1)}_3,c^{(1)}_4)=(1,-1,1,-1)$ and $c^{(2)}_j=0, j=1,...,4$, 
such that (\ref{eq:span_condition_zero}) holds,
which invalidates the required nondegeneracy in the proof and thus prevents the argument from proceeding.
\end{remark}

\begin{remark}
 In Theorem \ref{th:UIP_of_F_diag}, we assumed $f$ is fully coordinate nonlinear to ensure $\mathcal{F}_{\operatorname{diag}}(f)$ achieves UAP and UIP. In fact, if each component of $f$ is nonlinear, it is enough for local UIP.  
\end{remark}

\subsection{Proof for Example \ref{example:coordinate_nonlinear}}

\echo{
\begin{example}[Example \ref{example:coordinate_nonlinear}]
 Let $f=(f_1,f_1) \in C(\mathbb{R}^2,\mathbb{R}^2)$ where $f_1(x_1, x_2) = \sin x_1 + \sin x_2$ is a scalar function whose Fourier transformation $\hat{f}_1$ is supported only on four points: 
    $$
 \operatorname{supp}(\hat{f}_1) = \{(1,0), (-1,0), (0,1), (0,-1)\} \subset \mathbb{R}^2.
    $$
 Then $f$ is fully coordinate nonlinear and the diagonal affine invariant control family $\mathcal{F}_{\operatorname{diag}}(f)$ achieves the UAP and UIP.
\end{example}
}

The function in Example \ref{example:coordinate_nonlinear} is fully coordinate nonlinear, yet it does not satisfy the conditions proposed by Cheng et al. \cite{Cheng2023Interpolation}. Generally, we present a proposition showing that our assumption on $f$ (being fully coordinate nonlinear) includes a strictly larger class of functions.
\begin{proposition}
    \label{prop:fourier_fully_coordinate_nonlinear}
 Suppose $f=(f_1,\cdots,f_d)\in C(\mathbb{R}^d,\mathbb{R}^d)$ is globally Lipschitz and satisfies that for each $j=1,\cdots,d$, there exists some $(\xi_{j1},\cdots,\xi_{jd})\in \operatorname{supp} \widehat f_j$ such that $\xi_{j1}\cdots \xi_{jd} \neq 0$, then $f$ is fully coordinate nonlinear.
\end{proposition}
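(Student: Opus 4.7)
The plan is to prove the contrapositive of the fully coordinate nonlinearity assertion via the Fourier transform. Fix any index $j \in \{1,\dots,d\}$. It suffices to show that for every coordinate direction $k \in \{1,\dots,d\}$, there exists $b \in \mathbb{R}^d$ such that the restriction $t \mapsto f_j(t e_k + b)$ is nonlinear in $t$. I will argue by contradiction, assuming that for some $k$ this restriction is affine in $t$ for \emph{every} choice of $b$.

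If the restriction is affine in $t$ for every $b$, then $\partial_{x_k}^2 f_j \equiv 0$ in the sense of distributions on $\mathbb{R}^d$. Because $f_j$ is globally Lipschitz, it grows at most linearly and therefore defines a tempered distribution on $\mathbb{R}^d$, so its Fourier transform $\widehat{f_j}$ is well-defined as a tempered distribution. Applying the Fourier transform to $\partial_{x_k}^2 f_j = 0$ yields $\xi_k^2 \,\widehat{f_j}(\xi) = 0$, which forces
\begin{align}
 \operatorname{supp} \widehat{f_j} \subset \{\xi \in \mathbb{R}^d : \xi_k = 0\}.
\end{align}
This directly contradicts the hypothesis: by assumption there exists $(\xi_{j1},\dots,\xi_{jd}) \in \operatorname{supp} \widehat{f_j}$ with $\xi_{j1}\cdots\xi_{jd} \neq 0$, hence in particular $\xi_{jk} \neq 0$. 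The contradiction proves that some $b$ must yield a nonlinear restriction, establishing the coordinate nonlinearity of $f_j$ in direction $k$. Since $j$ and $k$ were arbitrary, $f$ is fully coordinate nonlinear.

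The main (and essentially only) subtlety is justifying the use of the Fourier transform: one must note that a globally Lipschitz function on $\mathbb{R}^d$ is tempered, so passing from $\partial_{x_k}^2 f_j = 0$ to the support condition on $\widehat{f_j}$ is legitimate. The step "affine on every parallel line implies $\partial_{x_k}^2 f_j = 0$" is a routine application of the chain rule: differentiating $f_j(t e_k + b) = \alpha(b) t + \beta(b)$ twice in $t$ gives $\partial_{x_k}^2 f_j(t e_k + b) = 0$ for all $b$ and $t$, hence everywhere. Everything else is mechanical, and this argument mirrors (and slightly refines) the Fourier-support reasoning already used in the proof of Theorem~\ref{th:UIP_of_F_diag}, so it dovetails with the surrounding framework.
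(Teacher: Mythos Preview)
Your proof is correct and reaches the same conclusion as the paper, namely that if all slices in the $e_k$-direction are affine then $\operatorname{supp}\widehat{f_j}\subset\{\xi_k=0\}$, contradicting the spectral hypothesis. The route is genuinely different, however: the paper computes the one-dimensional Fourier transform of the slice $\phi_b(t)=f_j(te_k+b)$ explicitly, then relates $\widehat{\phi_b}$ to $\widehat{f_j}$ through a Fourier-slice-theorem style calculation (integrating $\widehat{f_j}$ over the hyperplane $\xi_k=\omega$) to deduce the support constraint. You instead observe that affine-on-every-line is equivalent to $\partial_{x_k}^2 f_j=0$ distributionally, and then a single application of the Fourier transform gives $\xi_k^2\widehat{f_j}=0$ immediately. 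Your argument is shorter and avoids the multi-line integral manipulation; the paper's version is more self-contained in that it never invokes distributional derivatives, only the tempered Fourier transform of $f_j$ itself.

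One small remark: your final paragraph justifies $\partial_{x_k}^2 f_j=0$ by ``differentiating twice in $t$ via the chain rule,'' which is slightly informal since $f_j$ is only Lipschitz and need not possess classical second derivatives. The distributional statement you actually need follows rigorously from Fubini and integration by parts against test functions (the inner one-dimensional integral vanishes because an affine function paired with $\partial_t^2\varphi$ integrates to zero). You already flagged the statement as distributional earlier, so this is a matter of phrasing rather than a gap.
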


\begin{proof}
We prove the contradictory proposition. Suppose $f$ is not fully coordinate nonlinear. Then there exist an index $j\in\{1,\dots,d\}$ and a direction $e_k$ such that for every shift $b\in\mathbb R^d$ the slice
$\phi_b(t)=f_j(te_k+b)$
is an affine function with the form $a\,t+c$ where $a,c\in \mathbb{R}$ depends on $b$.
For notational simplicity in the subsequent derivations, we may assume that $e_k$ is the last standard basis vector $e_d$. The general case follows by permuting the coordinates.

The Fourier transform of the scalar function $\phi_b: t \to a t +c$ is $\widehat\phi_b$,
\begin{align}
    \label{eq:fourier_affine}
 \widehat\phi_b(\omega)=\int_{\mathbb R}(a\,t+c)\,e^{-\mathrm{i}\omega t}\,dt
 =2\pi \mathrm{i}\,a\,\delta'(\omega)+2\pi c\,\delta(\omega), \omega \in \mathbb{R}, 
\end{align}
which vanishes for all $\omega\neq0$, \emph{i.e.} $\operatorname{supp}(\widehat{\phi}_b) = \{0\}$ for all $b\in\mathbb R^d$.
By appling the relation between Fourier transform of $\phi$ and $f_j$, similar to the Fourier slice theorem \cite{Bracewell1956Strip} , we have
\begin{align}
 \hat\phi_b(\omega) 
    &= 
    \int_{\mathbb{R}} f_j(t e_d+b) e^{-\mathrm{i}\omega t} \, dt 
 	=  
    \int_{\mathbb{R}^d} f_j(x) e^{-\mathrm{i}\omega (x_d-b_d)} 
    \prod\limits_{l =1}^{d-1} \delta(x_l-b_l) \, dx \\ 
    &= 
	\tfrac{1}{(2\pi)^{d}} 
    \int_{\mathbb{R}^d}\int_{\mathbb{R}^d} 
 \widehat{f}_j(\xi) e^{\mathrm{i}\xi\cdot x} 
 e^{-\mathrm{i}\omega (x_d-b_d)} 
    \prod\limits_{l =1}^{d-1} \delta(x_l-b_l) 
    \, d\xi \, dx \\ 
    &= 
	\tfrac{1}{(2\pi)^{d}}
    \int_{\mathbb{R}^d} \widehat{f}_j(\xi) e^{\mathrm{i}\xi\cdot b} 
 \left( 
        \int_{\mathbb{R}^d} 
 e^{\mathrm{i}(\xi_d-\omega)(x_d-b_d)} 
        \prod\limits_{l =1}^{d-1} \delta(x_l-b_l) 
        \, dx 
 \right) 
 d\xi \\ 
    &= 
	\tfrac{1}{(2\pi)^{d}}
    \int_{\mathbb{R}^d} \widehat{f}_j(\xi) e^{\mathrm{i}\xi\cdot b} 
    \, 2\pi \delta(\omega-\xi_d)\, d\xi \\ 
    &= 
	\tfrac{1}{(2\pi)^{d-1}}
 \left.
 \left(
        \int_{\mathbb{R}^{d-1}} 
 \widehat{f}_j(\xi) e^{\mathrm{i}\xi \cdot b} 
        \, d\xi_1 \cdots d\xi_{d-1} 
 \right)
 \right|_{\xi_d=\omega} 
    \label{eq:Fourier_slice}.
\end{align}
The last equality implies that, when $\hat\phi_b(\omega) $ is regarded as a function of $b$, it is precisely the inverse Fourier transform of $\widehat{f}_j$ with respect to the first $d-1$ variables.

If there is a spectral point $\xi \in \operatorname{supp} (\widehat f_j)$ has the last componnet $\xi_d = \omega$, then the equality in (\ref{eq:Fourier_slice}) implies that, when viewed as a function of $b$, $\hat\phi_b(\omega) $ is not identically zero.
That is, there exists some $b \in \mathbb{R}^d$ such that $\omega \in \operatorname{supp}(\widehat{\phi}_b) = \{0\}$, which is possible only when $\omega=0$. 
Therefore, we have
\begin{align}
 \operatorname{supp}\widehat f_j \subset \{\xi\in\mathbb R^d ~|~ \xi_k=0\},
\end{align}
so in particular there is no $\xi \in \operatorname{supp}\widehat f_j$ with $\xi_1\cdots\xi_d\neq0$. This completes the proof.
\end{proof}

\subsection{Proof of Theorem \ref{th:localUIP_to_UIP} and Corollary \ref{th:UIP_ReLU}}

\echo{
\begin{theorem}[$C$-UAP + local UIP $\Rightarrow$ UIP, Theorem \ref{th:localUIP_to_UIP}]
 For a symmetric control family $\mathcal{F}$, if  $\mathcal{H}(\mathcal{F})$ has $C$-UAP for $\mathrm{Diff}_0(\mathbb{R}^d)$ and has local UIP,
 then $\mathcal{H}(\mathcal{F})$ has UIP.
\end{theorem}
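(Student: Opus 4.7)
The plan is to use the $C$-UAP as an ``approximate positioning'' tool and local UIP, together with the symmetry of $\mathcal{F}$, as a ``micro-correction'' tool, pivoting through the anchor configuration provided by local UIP. A preliminary observation is that $\mathcal{H}(\mathcal{F})$ is closed under inversion: any composition $\phi^{\tau_n}_{f_n}\circ\cdots\circ\phi^{\tau_1}_{f_1}$ has inverse $\phi^{\tau_1}_{-f_1}\circ\cdots\circ\phi^{\tau_n}_{-f_n}$, which belongs to $\mathcal{H}(\mathcal{F})$ precisely because $\mathcal{F}$ is symmetric. First I would invoke local UIP for the given $N$ to obtain distinct anchor points $z_1,\dots,z_N\in\mathbb{R}^d$ and a tolerance $\delta>0$ such that every distinct $N$-tuple $(w_1,\dots,w_N)$ with $\|w_i-z_i\|<\delta$ is interpolated exactly from $(z_1,\dots,z_N)$ by some element of $\mathcal{H}(\mathcal{F})$.

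Given data $(x_i,y_i)_{i=1}^N$, I would first build a flow map $\Phi_1\in\mathcal{H}(\mathcal{F})$ sending $x_i\mapsto z_i$ exactly, as follows. Choose any orientation-preserving $g_1\in\mathrm{Diff}_0(\mathbb{R}^d)$ with $g_1(x_i)=z_i$; such a $g_1$ exists in dimension $d\ge 2$ by the $N$-transitivity of $\mathrm{Diff}_0(\mathbb{R}^d)$ on ordered distinct $N$-tuples (a standard isotopy argument), and in dimension $d=1$ by labeling the anchors to match the assumed monotone ordering of the $x_i$. By $C$-UAP on a compact set $\Omega\supset\{x_1,\dots,x_N\}$ we obtain $\varphi_1\in\mathcal{H}(\mathcal{F})$ with $\|\varphi_1-g_1\|_{C(\Omega)}<\delta$, so the distinct points $z_i':=\varphi_1(x_i)$ satisfy $\|z_i'-z_i\|<\delta$. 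Local UIP then yields $\psi_1\in\mathcal{H}(\mathcal{F})$ with $\psi_1(z_i)=z_i'$, and closure under inversion gives $\psi_1^{-1}\in\mathcal{H}(\mathcal{F})$. Setting $\Phi_1:=\psi_1^{-1}\circ\varphi_1$ produces an element of $\mathcal{H}(\mathcal{F})$ with $\Phi_1(x_i)=z_i$ for every $i$.

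Applying exactly the same three-step procedure to the reversed pair $(y_i,z_i)$---pick $g_2\in\mathrm{Diff}_0(\mathbb{R}^d)$ with $g_2(y_i)=z_i$, approximate it by $\varphi_2\in\mathcal{H}(\mathcal{F})$ to within $\delta$ at the $y_i$, correct the residual via local UIP to obtain $\psi_2\in\mathcal{H}(\mathcal{F})$ with $\psi_2(z_i)=\varphi_2(y_i)$, and invert $\varphi_2$---produces $\Phi_2:=\varphi_2^{-1}\circ\psi_2\in\mathcal{H}(\mathcal{F})$ with $\Phi_2(z_i)=y_i$. Then $\Phi_2\circ\Phi_1\in\mathcal{H}(\mathcal{F})$ sends $x_i\mapsto y_i$, which is UIP. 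I expect the main delicate point to be the step where $C$-UAP is used to drive $\varphi_1(x_i)$ into the $\delta$-neighborhood of $z_i$ (and similarly for $\varphi_2(y_i)$): this is precisely why the uniform norm is essential, since $L^p$-UAP provides no pointwise control and the argument would break down, matching the remark in the paper that $L^p$-UAP plus local UIP does not suffice. A minor but nontrivial check to carry out along the way is that every element of $\mathcal{H}(\mathcal{F})$ built from Lipschitz fields is a genuine orientation-preserving diffeomorphism, so that the $z_i'$ remain distinct and the required inverses $\psi_1^{-1},\varphi_2^{-1}$ exist.
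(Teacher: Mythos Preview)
Your proposal is correct and follows essentially the same approach as the paper's proof: obtain anchors $z_i$ and $\delta$ from local UIP, use $C$-UAP to approximate a diffeomorphism sending $x_i\mapsto z_i$ (resp.\ $y_i\mapsto z_i$) so the images land in the $\delta$-balls, correct via local UIP, and invert using the symmetry of $\mathcal{F}$; the paper's final map $\phi_2^{-1}\circ\phi_4\circ\phi_3^{-1}\circ\phi_1$ is exactly your $\Phi_2\circ\Phi_1=\varphi_2^{-1}\circ\psi_2\circ\psi_1^{-1}\circ\varphi_1$ under the dictionary $\phi_1\leftrightarrow\varphi_1$, $\phi_2\leftrightarrow\varphi_2$, $\phi_3\leftrightarrow\psi_1$, $\phi_4\leftrightarrow\psi_2$. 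The only cosmetic difference is that the paper invokes the UIP of $\mathcal{F}_{\mathrm{aff}}(f)$ (Theorem~\ref{th:UIP_of_F_diag}, part 1) to produce the diffeomorphisms $g_1,g_2$, whereas you cite $N$-transitivity of $\mathrm{Diff}_0(\mathbb{R}^d)$ directly---both are valid.
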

}
\begin{proof}[Proof of Theorem \ref{th:localUIP_to_UIP}]
 Recall that $\mathcal{H}(\mathcal{F})$ holds $C$-UAP and local UIP, for any distinct data pairs $\{(x_1,y_1),\cdots,(x_N,y_N)\}$, where $ x_i,y_i \in \mathbb{R}^d$ satisfy $x_i\neq x_j, y_i \neq y_j$ for any $i\neq j$, our goal is to find a flow map $\Phi \in \mathcal{H}(\mathcal{F})$ such that $\Phi(x_i)=y_i$ for all $i=1,\cdots,N$.

 Since $\mathcal{H}(\mathcal{F})$ has local UIP, then there exists $z_1,\dots,z_N \in \mathbb{R}^d$ and $\delta > 0$ such that for any $\{z_i'\}_{i=1,\cdots,N}$ and $\{z_i''\}_{i=1,\cdots,N}$ satisfying $z_i', z_i'' \in B(z_i, \delta)$, there exists a function $\phi_3, \phi_4 \in \mathcal{H}(\mathcal{F})$ such that $\phi_3(z_i) = z_i'$ and $\phi_4(z_i) = z_i''$ for all $i = 1,\dots,N.$ According to the first part of Theorem \ref{th:UIP_of_F_diag} we know that there exist $\psi_1,\psi_2 \in \operatorname{Diff}_0(\mathbb{R}^d)$ such that $\psi_1(x_i)=z_i=\psi_2(y_i)$ for all $i=1,\cdots,N$. Since $\mathcal{H}(\mathcal{F})$ has $C$-UAP for $\text{Diff}_0(\mathbb{R}^d)$, we can find flow maps $\phi_1,\phi_2 \in \mathcal{H}(\mathcal{F})$ such that 
    \begin{align}
        \phi_1(x_i), \phi_2(y_i) \in B(z_i,\delta).
    \end{align}
 Asign $z_i' = \phi_1(x_i)$ and $z_i'' = \phi_2(y_i)$, we have 
    \begin{align}
        \phi_3^{-1}\circ \phi_1(x_i) = z_i = \phi_4^{-1} \circ \phi_2(y_i), \quad i=1,\cdots,N.
    \end{align}
 Let $\Phi:=\phi_2^{-1} \circ \phi_4 \circ \phi_3^{-1}\circ \phi_1$, since $\mathcal{F}$ is symmetric, we have $\phi_2^{-1},\phi_3^{-1}\in \mathcal{H}(\mathcal{F})$, and $\Phi \in \mathcal{H}(\mathcal{F})$ is what we need, the proof is complete.
\end{proof}
The proof idea is summarized in Figure \ref{fig:UIP_proof}:
\begin{figure}[htb!]
    \centering
    \includegraphics[width=0.73\textwidth]{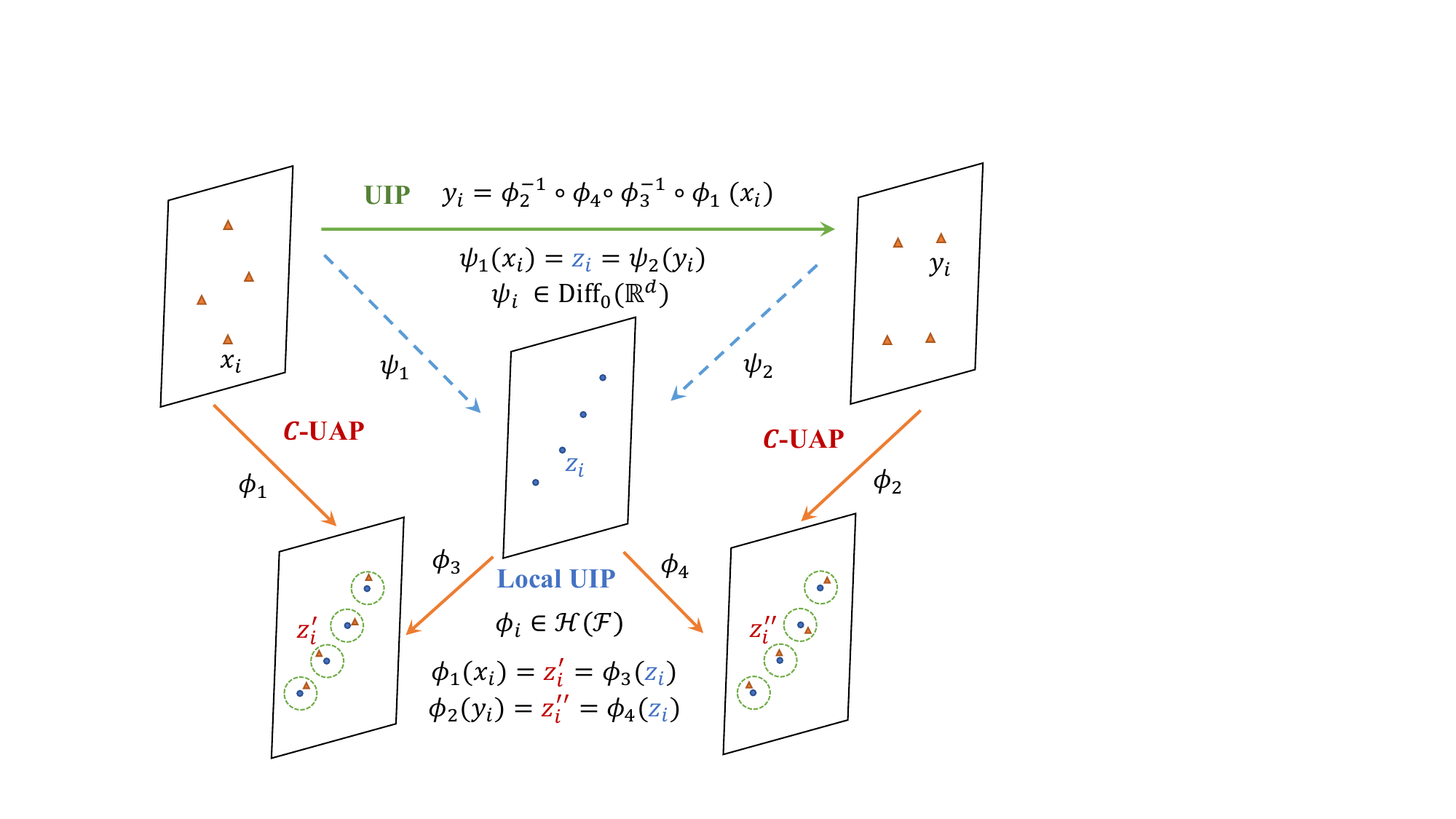}
    \caption{Proof idea for Theorem \ref{th:localUIP_to_UIP}. 
 }
    \label{fig:UIP_proof}
\end{figure}

\echo{
\begin{corollary}[Corollary \ref{th:UIP_ReLU}]
 The associated affine control family $\mathcal{F}_{\mathrm{ass}}(\mathrm{ReLU})$ achieves UIP.
\end{corollary}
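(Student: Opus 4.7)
The plan is to apply Theorem \ref{th:localUIP_to_UIP}. The control family $\mathcal{F}_{\mathrm{ass}}(\mathrm{ReLU})$ is symmetric because $\mathcal{F}_0$ is closed under $f \mapsto -f$ and both $\pm\mathrm{ReLU}$ are listed explicitly. The $C$-UAP of $\mathcal{H}(\mathcal{F}_{\mathrm{ass}}(\mathrm{ReLU}))$ for $\mathrm{Diff}_0(\mathbb{R}^d)$ is furnished by Theorem \ref{th:UAP_of_F_ass} (condition 1), since $\mathrm{ReLU}$ is coordinate-separable, globally Lipschitz, and nonlinear. The corollary therefore reduces to establishing that $\mathcal{H}(\mathcal{F}_{\mathrm{ass}}(\mathrm{ReLU}))$ possesses local UIP.

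For local UIP I would fix $N$ reference points $x_1,\dots,x_N$ in general position and introduce the parameterized family of flow maps
\begin{align*}
\varphi_\theta \;=\; \Phi_{A_k,b_k}\circ\phi^{t_k}_{\mathrm{ReLU}}\circ\Phi_{A_{k-1},b_{k-1}}\circ\cdots\circ\phi^{t_1}_{\mathrm{ReLU}}\circ\Phi_{A_0,b_0} \;\in\; \mathcal{H}(\mathcal{F}_{\mathrm{ass}}(\mathrm{ReLU})),
\end{align*}
where $\Phi_{A,b}(x)=Ax+b$ with $\det A>0$, and the parameter $\theta=(A_j,b_j,t_j)_j$ varies in a neighborhood of a base value $\theta_0$ chosen so that $\varphi_{\theta_0}=\mathrm{id}$ and no intermediate state $\Phi_{A_{j-1},b_{j-1}}\circ\cdots\circ\Phi_{A_0,b_0}(x_i)$ lies on a coordinate hyperplane. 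Under this genericity condition $\varphi_\theta(x_i)$ is smooth in $\theta$ near $\theta_0$, so verifying local UIP at $(x_1,\dots,x_N)$ amounts to showing that the evaluation map $E(\theta)=(\varphi_\theta(x_1),\dots,\varphi_\theta(x_N))$ is a submersion at $\theta_0$; the implicit function theorem will then supply an open neighborhood of $(x_1,\dots,x_N)$ in the image of $E$, which is precisely the local UIP condition.

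The main obstacle is establishing surjectivity of $dE(\theta_0)\colon T_{\theta_0}\to\mathbb{R}^{Nd}$. Affine variations alone contribute only the $(d^2+d)$-dimensional subspace $\{(Ax_1+b,\dots,Ax_N+b)\}$ to the image of $dE(\theta_0)$, which is insufficient whenever $N>d+1$, so the extra directions must come from the times $t_j$ parameterizing the ReLU flows. A direct computation at $\theta_0$ gives $\partial\varphi_\theta(x_i)/\partial t_j = M_j^i\,\mathrm{ReLU}(y_j^i)$, where $y_j^i$ denotes the state of $x_i$ just before the $j$-th ReLU block and $M_j^i$ is the Jacobian of the subsequent affine composition. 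By routing the $x_i$ through the layers with generic affine blocks so that the sign patterns of the $y_j^i$ differ across $i$ in a controlled way, one can arrange these vectors to be linearly independent from those previously generated. An inductive construction with $O(N)$ blocks, adding one new independent tangent direction (or several) at a time, should fill out $\mathbb{R}^{Nd}$; the delicate part is the combinatorial bookkeeping needed to guarantee that each fresh layer contributes directions not already captured by the span of the earlier ones and not accidentally annihilated by the final affine block chosen to enforce $\varphi_{\theta_0}=\mathrm{id}$. Once local UIP is secured, Theorem \ref{th:localUIP_to_UIP} immediately promotes the combination of $C$-UAP and local UIP to full UIP for $\mathcal{F}_{\mathrm{ass}}(\mathrm{ReLU})$.
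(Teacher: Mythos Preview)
Your reduction---check symmetry, invoke $C$-UAP from Theorem~\ref{th:UAP_of_F_ass}, then apply Theorem~\ref{th:localUIP_to_UIP}---matches the paper exactly. The divergence is entirely in how local UIP is established.

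The paper avoids any submersion or implicit-function-theorem argument. It uses instead the explicit form of the ReLU flow: $\phi_{\mathrm{ReLU}}^\tau$ is (coordinatewise) a leaky-ReLU map, equal to $e^\tau x$ for $x>0$ and to $x$ for $x\le 0$, so it fixes the nonpositive half-line and dilates the positive one. Taking the reference points on a line, $x_i=i\,e_1$ with $\delta=1/(2N)$, one moves them to nearby targets $y_i$ one at a time: an affine shift places the already-adjusted points on the nonpositive side of a coordinate hyperplane (where the next ReLU flow leaves them fixed) and the current point on the positive side, where a combination of affine maps and ReLU flows pushes it to its target. Iterating gives an explicit $\Phi=\Phi_N\circ\cdots\circ\Phi_1\in\mathcal{H}(\mathcal{F}_{\mathrm{ass}}(\mathrm{ReLU}))$ with $\Phi(x_i)=y_i$, and no differentiability or rank computation is needed.

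Your differential-geometric route is not wrong in spirit, but note that at a base point with all $t_j=0$ the affine variations across the layers collapse to a single $(d^2{+}d)$-dimensional subspace (the composite is affine regardless of which $A_j,b_j$ you perturb), so the $t_j$-derivatives must supply essentially all of $\mathbb{R}^{Nd}$. Arranging the intermediate sign patterns so that these derivatives are independent---the ``combinatorial bookkeeping'' you defer---is therefore the entire content of the argument, and it is not carried out here. The paper's constructive approach sidesteps this computation completely by exploiting the one-sided fixing property of the leaky-ReLU flow.
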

}
\begin{proof}[Proof of Corollary \ref{th:UIP_ReLU}]
Since the control family $\mathcal{F}_{\mathrm{ass}}(\mathrm{ReLU})$ achieves the UAP, here we only need to check that it achieves the local UIP. For any positive integer $N$, we can choose $N$ points $x_i, i=1,...,N,$ in $\mathbb{R}^d$ as $x_i = i e_1$, which are arranged along the first axis. Then for any corresponding target points $y_i$ such that $\|y_i-x_i\| < \delta = 1/(2N)$, we can construct a flowmap $\Phi$ in $\mathcal{H}(\mathcal{F}_{\mathrm{ass}}(\mathrm{ReLU}))$ such that $\Phi(x_i)=y_i$. 
It is easy to construct such a $\Phi$ step by step because the flowmap of ReLU is the leaky-ReLU function, which keeps the points in one direction fixed while moving only the points in the other direction \cite{Duan2022Vanilla}.
In fact, since the chosen points follow a regular pattern, we can inductively construct $\Phi_i$ such that 
$y_j = \Phi_i(x_j)$ for $j = 1, \ldots, i$.  
Then $\Phi_{i+1}$ only needs to keep the first $i$ points fixed while moving the subsequent points so that 
$y_{i+1} = \Phi(x_{i+1})$.  
In this way, by finally setting 
\(
\Phi = \Phi_N \circ \cdots \circ \Phi_1,
\)
the desired mapping is obtained.

\end{proof}

\subsection{Proof for Example \ref{example:simple_F_aff}}

\echo{
\begin{example}[Example \ref{example:simple_F_aff}]
 Consider the function $f$ in Example~\ref{example:permute_ReLU}, we have the control family $\mathcal{F}=\{\pm f(Ax+b) ~|~ A\in\mathbb{R}^{2\times 2}, b\in \mathbb{R}^2\}$ achieves the UAP and UIP.
\end{example}
}

Note that the control family $\mathcal{F}=\{\pm f(Ax+b) ~|~ A\in\mathbb{R}^{2\times 2}, b\in \mathbb{R}^2\}$ here differs slightly from $\mathcal{F}_{\mathrm{aff}}(f)$.  
In particular, the functions in $\mathcal{F}_{\mathrm{aff}}(f)$ are of the form 
\[
g(x) = \pm S f(Ax+b).
\]
But in our current setting, the matrix $S$ is constrained to be the identity.  
This restriction, however, can be eliminated through an appropriate technique.  
To see this, one may directly verify that the function $f: (x_1,x_2) \mapsto (\mathrm{ReLU}(x_2),\mathrm{ReLU}(x_1))$ still ensures the validity of the following identity:
\begin{align}
S \operatorname{ReLU}(Ax+b)
=
\sum_{i=1}^2 \sum_{j=1}^2 s_{ij} f \big(
 (E_{12}+E_{21})(E_{ij}(Ax+b))
 \big),
\end{align}
where $s_{ij}$ is the $(i,j)$ component of $S$. 
This implies that 
\(
\operatorname{span}(\mathcal{F}) = \operatorname{span}\bigl(\mathcal{F}_{\mathrm{aff}}(\mathrm{ReLU})\bigr),
\)
and therefore $\mathcal{F}$ achieves both UAP and UIP, since $\mathcal{F}_{\mathrm{aff}}(\mathrm{ReLU})$ does.
This stands in sharp contrast to Example~\ref{example:permute_ReLU}, where it was shown that $\mathcal{F}_{\mathrm{ass}}(f)$ can not achieve the UAP.

\section*{Acknowledgments}
We would like to express our sincere gratitude to Zuowei Shen, Qianxiao Li, Ting Lin and Jingpu Cheng for the insightful discussions.

\bibliographystyle{siamplain}
\bibliography{refs.bib}

\end{document}